\newtheorem{theorem}{Theorem}[section]
\newtheorem{corollary}[theorem]{Corollary}
\newtheorem{lemma}[theorem]{Lemma}
\newtheorem{proposition}[theorem]{Proposition}
\newtheorem{question}[theorem]{Question}
\newtheorem{conjecture}[theorem]{Conjecture}
\theoremstyle{definition}
\newtheorem{definition}[theorem]{Definition}
\newtheorem{example}[theorem]{Example}
\numberwithin{equation}{section}
\newcommand {\C}{\mathbb C}
\newcommand {\N}{\mathbb N}
\newcommand {\Z}{\mathbb Z}
\newcommand {\B}{\mathcal B}
\newcommand{\supp}{{\rm supp\,}}
\newcommand{\eps}{\varepsilon}
\newcommand{\lo}{\ell^{\,1}}
\begin{document}





\title[Finitely-generated left ideals]{Finitely-generated left ideals in Banach algebras on groups and semigroups}

\author[J.\ T.\  White]{Jared T.\ White}
\address{
Department of  Mathematics and Statistics\\
University of Lancaster\\
Lancaster LA1 4YF\\
United Kingdom}
\email{j.white6@lancaster.ac.uk}

\date{2016}

\begin{abstract}
{
Let $G$ be a locally compact group. We prove that the augmentation ideal in $L^1(G)$ is (algebraically) finitely-generated as a left ideal if and only if $G$ is finite. We then investigate weighted versions of this result, as well as a version for semigroup algebras. Weighted measure algebras are also considered. We are motivated by a recent conjecture of Dales and \.Zelazko, which states that a unital Banach algebra in which every maximal left ideal is finitely-generated is necessarily finite-dimensional. We prove that this conjecture holds for many of the algebras considered. Finally, we use the theory that we have developed to construct some examples of commutative Banach algebras that relate to a theorem of Gleason. \\
{}\\
To appear in \emph{Studia Mathematica}.
 }
\end{abstract}

\subjclass[2010]{46H10, 43A10, 43A20}

\keywords{Maximal left ideal, Banach algebra, finitely-generated, augmentation ideal, group algebra, weighted group algebra, semigroup algebra, Gleason's Theorem}

\maketitle

\section{Introduction}

\noindent 
This article is concerned with finitely-generated ideals in certain Banach algebras, where ``finitely-generated'' is understood in the following sense:

\begin{definition} \label{1.1} 
Let $A$ be an algebra, and $A^\sharp$ its (conditional) unitisation. A left ideal $I$ in $A$ is \textit{finitely-generated} if there exist $n \in \N$ and $x_1,\ldots,x_n \in A$ such that $I = A^\sharp x_1 + \cdots + A^\sharp x_n$. 
\end{definition}
In the case that a left ideal is finitely-generated, it is immediate that the generators all belong to the ideal. Note also that when this definition is applied to topological algebras we do not take the closure on the right-hand side.

The Banach algebras of most interest to us will be weighted group algebras and weighted semigroup algebras, so next we define these terms precisely.

\begin{definition} \label{1.2} 
Let $S$ be a semigroup. Then a \textit{weight} on $S$ is a function $\omega : S \mapsto [1,\infty)$ such that
$$\omega(uv) \leq \omega(u) \omega(v) \quad (u, v \in S).$$
\end{definition}
\noindent
In the case where $S$ has an identity $e$, we insist that $\omega(e) = 1$. Moreover, when $G$ is a locally compact group, weights on $G$ are always assumed to be Borel functions, bounded on compact sets. 

Given a semigroup $S$, and a weight $\omega$ on $S$, we define
$$\lo(S, \omega) = \left \lbrace f : S \rightarrow \C : \Vert f \Vert_{\omega} := \sum_{u \in S} \vert f(u) \vert \omega(u) < \infty \right \rbrace.$$
The set $\lo(S, \omega)$ is a Banach space under pointwise operations with the norm given by $\Vert \cdot \Vert_\omega$, and a Banach algebra when multiplication is given by convolution. By a \textit{weighted semigroup algebra}, we shall mean a Banach algebra of this form. 

Now suppose that we have a locally compact group $G$. We shall denote left Haar measure on $G$ by $m$. Suppose that $\omega$ is a weight on $G$. Then we define
$$L^1(G, \omega) = \left \lbrace f \in L^1(G) : \Vert f \Vert_{\omega} := \int_G \vert f(t) \vert \omega(t) \, {\rm d}m(t) < \infty \right \rbrace,$$
and
$$M(G, \omega)  = \left \lbrace \mu \in M(G) : \Vert \mu \Vert_{\omega} := \int_G \omega(t) \, {\rm d} \vert \mu \vert (t) < \infty \right \rbrace.$$
The sets $L^1(G, \omega)$ and $M(G, \omega)$ are Banach algebras with respect to convolution multiplication and pointwise addition and scalar multiplication. Moreover, $L^1(G, \omega)$ is a closed subalgebra of $M(G, \omega)$. It is a Banach algebra of the form $L^1(G, \omega)$ that we refer to as a \textit{weighted group algebra}. When the group $G$ is discrete, this definition coincides with that given for a semigroup.

\begin{example} \label{1.2a} 
\begin{enumerate}
\item[(i)] The trivial weight $\omega = 1$ is always a weight on any locally compact group $G$ (or any semigroup), and in this case we recover the group algebra $L^1(G)$.
\item[(ii)] Let $G$ be a discrete group, fix a generating set  $X$, and write $\vert u \vert_X$ for the word-length of $u \in G$ with respect to $X$. Then 
$$u \mapsto (1+ \vert u \vert_X)^\alpha, \quad G \rightarrow [1, \infty), $$
defines a weight on $G$ for each $\alpha \geq 0$. We call this weight a \textit{radial polynomial weight of degree $\alpha$}.
\item[(iii)] With notation as in (ii), the map
$$u \mapsto c^{\vert u \vert_X ^\beta}, \quad G \rightarrow [1, \infty), $$
defines a weight for any $c \geq 1$ and $0 < \beta \leq 1$. We call this a \textit{radial exponential weight with base $c$ and degree $\beta$}.
\end{enumerate}
\end{example}
\noindent
More generally, a weight on a finitely-generated group $G$ is said to be \textit{radial} if there exists a generating set $X$ such that $\vert u \vert_X = \vert v \vert_X$ implies that $\omega(u) = \omega(v)$ for any $u, v \in G$. When the generating set is clear, we usually write $\vert u \vert$ in place of $\vert u \vert_X$.

Let $S$ be a semigroup, and take $\omega$ to be a weight on $S$. We define the \textit{augmentation ideal} of $\lo (S, \omega)$ to be 
$$\lo_{\,0}(S, \omega) = \left \lbrace f \in \lo(S, \omega) : \sum \limits_{u \in S} f(u) = 0 \right \rbrace.$$ 
This is the kernel of the \textit{augmentation character}, which is the map given by
$$f \mapsto \sum_{u \in S} f(u), \quad \lo(S, \omega) \rightarrow \C.$$ 
The augmentation ideal is a two-sided ideal of codimension one, and it has analogues in the weighted group algebras and the weighted measure algebras of a locally compact group $G$, also referred to as the augmentation ideals of those algebras:
$$L^1_0(G, \omega) = \left \lbrace f \in L^1(G, \omega) : \int _G f {\rm d} m =0 \right \rbrace;$$
$$M_0(G, \omega) = \left \lbrace \mu \in M(G, \omega) : \mu(G) = 0 \right \rbrace .$$
There are also corresponding augmentation characters, given by
$$f \mapsto \int_G f(t) {\rm d}m(t), \quad L^1(G, \omega) \rightarrow \C,$$
and
$$\mu \mapsto \mu(G), \quad M(G, \omega) \rightarrow \C,$$
respectively. Finally, for a semigroup $S$, we denote by $\C S$ the dense sub\-algebra of finitely-supported elements of $\lo(S)$, and define
$$\C_0 S = \lo_{\,0}(S) \cap \C S.$$

One of the central themes of this paper will be the following question:

\begin{question} \label{1.3} 
Which of the Banach algebras mentioned above have the property that the underlying group or semigroup is finite whenever the augmentation ideal is finitely-generated?
\end{question}

We now give this question some context. In 1974 Sinclair and Tullo ~\cite{st1974} proved that a left Noetherian Banach algebra, by which we mean a Banach algebra in which all the left ideals are finitely-generated in the sense of Definition \ref{1.1}, is necessarily finite dimensional. In 2012 Dales and \.Zelazko ~\cite{dz2012} conjectured the following strengthening of Sinclair and Tullo's result:

\begin{conjecture} \label{1.2} 
Let $A$ be a unital Banach algebra in which every maximal left ideal is finitely-generated. Then A is finite dimensional.
\end{conjecture}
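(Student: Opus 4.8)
Since the statement is a conjecture, open in general, the realistic aim is to establish it for the families of Banach algebras that this paper is concerned with: the weighted group algebras $\lo(G,\omega)$ with $G$ discrete (precisely the unital weighted group algebras), the weighted measure algebras $M(G,\omega)$ (always unital), and the weighted semigroup algebras $\lo(S,\omega)$ with $S$ unital. The plan is to describe that route. Note first that Sinclair and Tullo's theorem already disposes of the case in which \emph{every} left ideal is finitely-generated, so the whole difficulty is to make do with information about the \emph{maximal} left ideals alone.

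The plan rests on one trivial reduction followed by one hard structural input. For the reduction, observe that each of these algebras $A$ carries a distinguished character, the augmentation character $\varphi \colon A \to \C$, which is continuous, unital and surjective; its kernel is the augmentation ideal $I_0$, a closed two-sided ideal of codimension one. In a unital algebra any codimension-one left ideal is a maximal left ideal, so $I_0$ is itself one of the maximal left ideals of $A$. Consequently, if every maximal left ideal of $A$ is finitely-generated, then in particular $I_0$ is finitely-generated as a left ideal.

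It therefore suffices to prove the structural dichotomy announced in the abstract, together with its weighted and measure-algebra analogues: for the algebras above, the augmentation ideal is finitely-generated as a left ideal only when the underlying group or semigroup is finite --- whence $A$ is finite-dimensional and we are done. To prove such a statement one argues by contradiction. Write $I_0 = A^\sharp x_1 + \cdots + A^\sharp x_n$ with each $x_i \in I_0$. Because $I_0 = \ker \varphi$ is closed, it is a Banach space, and the bounded linear map $(a_1, \dots, a_n) \mapsto a_1 x_1 + \cdots + a_n x_n$ from $(A^\sharp)^n$ onto $I_0$ is an open surjection; hence there is $C > 0$ such that every $f \in I_0$ admits such a representation with $\|a_1\| + \cdots + \|a_n\| \le C \|f\|$. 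One then applies this uniform estimate to elements built from the natural generators of $I_0$, namely the differences $\delta_g - \delta_e$ for $g$ in the group (with a suitable substitute in the semigroup case), whose norms are controlled by the weight. If $G$ is infinite one chooses such an $f$, or a net of them, indexed by $g$ ranging over an infinite set in general position relative to the (countable) supports of $x_1, \dots, x_n$; since convolution by $a_i$ merely translates the support of $x_i$, the coefficient norms in any representation of $f$ are forced to grow without bound, contradicting the uniform estimate. For non-finitely-generated or non-discrete $G$ one first reduces, using the structure theory of locally compact groups, to a compactly generated and hence essentially finitely-generated situation.

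The algebraic reduction is trivial; all the work lies in the structural dichotomy, and there the hardest cases are: (i) weighted algebras with rapidly growing weights --- radial exponential weights and beyond --- where the crude support-spreading argument must be upgraded to a quantitative comparison between the growth of $\omega$ and the geometry of $G$; (ii) the measure algebras $M(G,\omega)$, which are non-separable and carry a large singular part, so that the support heuristics break down and a genuinely different mechanism is needed; and (iii) the semigroup case, where there are no inverses, the translation trick is unavailable, and one must instead exploit whatever cancellativity or archimedean structure $S$ possesses. I expect (ii) to be the principal obstacle. The conjecture for arbitrary unital Banach algebras lies well outside the reach of these group-theoretic methods, and is connected --- via the constructions alluded to in the abstract --- to subtle phenomena around Gleason's theorem.
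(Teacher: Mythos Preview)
Your reduction---observe that the augmentation ideal has codimension one, hence is a maximal left ideal, so must be finitely-generated under the hypothesis---is exactly the paper's starting point. But the plan then collapses at the structural dichotomy, and in a way the paper itself demonstrates. Corollary~\ref{1.7}(ii) shows that for any infinite finitely-generated group $G$ equipped with a radial exponential weight of degree~1, the augmentation ideal of $\lo(G,\omega)$ \emph{is} finitely-generated. So the implication ``augmentation ideal finitely-generated $\Rightarrow$ group finite'' is simply false in case (i) of your difficulty list; it is not that the ``crude support-spreading argument must be upgraded'', it is that no argument can succeed because the conclusion is wrong. The paper accordingly does \emph{not} verify the conjecture for arbitrary weighted group algebras: see the final paragraph of \S6, where only the side route via abelianisation (when $[G:G']=\infty$) and the commutative Ferreira--Tomassini theorem is recorded. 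Likewise the conjecture is left open for $\lo(M)$ with $M$ an arbitrary monoid.

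Your assessment of the obstacles is also inverted. The non-discrete measure algebras $M(G,\omega)$, which you single out as the principal difficulty, are handled in two lines (Theorem~\ref{3.6}): quotient by the continuous measures to land in $\lo(G,\omega)$ with $G$ uncountable discrete, then Lemma~\ref{3.5a} gives a non-finitely-generated maximal ideal whose preimage works. The genuine obstruction is the discrete weighted case with tail-preserving weight. Finally, for the cases where the dichotomy \emph{does} hold, the paper's mechanism is not a support-spreading estimate on coefficient norms but rather the introduction of the partial-sum functionals $\sigma_n(f)=\sum_{u\in B_n}f(u)$: one shows (Lemmas~\ref{4.4} and \ref{6.4}) that $(\sigma_n(f))$ lies in $\lo$ (or $\lo(\tau)$) for every $f$ in the ideal generated by the $\delta_e-\delta_x$, and then exhibits an explicit $f\in\lo_{\,0}$ for which it does not.
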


It is this conjecture that motivates the present work. The conjecture is known to be true in the commutative case by a theorem of Ferreira and Tomassini 
\cite{ft1978}, and Dales and \.Zelazko presented a generalization of this result in their paper ~\cite{dz2012}. The conjecture is also known to be true for C*-algebras ~\cite{bk2014}, and for $\B(E)$ for many Banach spaces $E$ ~\cite{dales2013}. For instance the conjecture is known to be true when $E$ is a Banach space  which is complemented in its bidual and has a Schauder basis, or when $E = c_0 (I)$, for $I$ an arbitrary non-empty index set. However, the conjecture remains open for an arbitrary Banach space $E$.

We are interested in the conjecture for the Banach algebras arising in harmonic analysis. Our approach is to note that an affirmative answer to Question \ref{1.3} for some class of Banach algebras implies that the Dales--\.Zelazko Conjecture holds for that class. As the Dales--\.Zelazko conjecture is about unital Banach algebras, all the discrete semigroups that we consider will be monoids, in order to ensure that we are in this setting (note, however, that $\lo(S)$ can be unital without $S$ being a monoid; see for instance \cite[Example 10.15]{DLS}). However, in \S 3 we do prove some results about $L^1(G, \omega)$ for a locally compact group $G$ and a weight $\omega$, an algebra which of course is unital only when $G$ is discrete 

We now discuss our main results. Full definitions of the terminology used will be given in the body of the article. We beginning with the following answer to Question \ref{1.3} for group algebras:

\begin{corollary} \label{1.4}
Let $G$ be a locally compact group. Then $L_0^1(G)$ is finitely-generated if and only if $G$ is finite.
\end{corollary}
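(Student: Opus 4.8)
The plan is to deduce Corollary~\ref{1.4} from a result about discrete groups together with a structure-theoretic argument passing from general locally compact $G$ to the discrete case. The ``if'' direction is trivial: if $G$ is finite then $L^1(G)$ is finite-dimensional, so every ideal, being a subspace, is finitely-generated. The substance is the ``only if'' direction. First I would observe that the abstract theorem underlying this corollary---that $\lo_{\,0}(G)$ is finitely-generated as a left ideal only for finite discrete $G$---is presumably the key result of the paper in the unweighted discrete case, so I would want Corollary~\ref{1.4} to follow from it. The bridge is: if $L^1_0(G)$ is finitely-generated, then one should be able to produce a discrete quotient or subgroup for which the augmentation ideal is still finitely-generated, and then invoke the discrete result.

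The key steps, in order, would be: (1) Reduce to the case where $G$ is \emph{compact}. If $G$ is not compact, I would argue directly that $L^1_0(G)$ cannot be finitely-generated; a natural approach is that finitely many generators $f_1,\dots,f_n \in L^1(G)$ have the property that $L^1(G)^\sharp f_1 + \cdots + L^1(G)^\sharp f_n$ is, after a suitable approximation, contained in something ``small'' (for instance using that $L^1(G)$ has a bounded approximate identity and an Arens-type or Cohen factorization argument, or using that $\sigma$-compactness and the failure of $L^1(G)$ to be unital obstruct generation). (2) For $G$ compact, pass to the discrete group $G_d$ (or rather use the existence of a nontrivial discrete quotient when $G$ is infinite): an infinite compact group has infinitely many distinct irreducible representations / infinitely many points in its dual, and one can find a proper open normal subgroup $N$ with $G/N$ finite but nontrivial for arbitrarily large index, or else $G$ is a projective limit of finite groups; in either situation one obtains a quotient map $L^1(G) \to \C[G/N]$ (or onto the group algebra of an infinite discrete quotient, in the Lie case one instead restricts). (3) Show that the image of a finitely-generated left ideal under a surjective algebra homomorphism is finitely-generated, so $L^1_0(G)$ finitely-generated forces the augmentation ideal of some infinite discrete group algebra to be finitely-generated, contradicting the discrete theorem.

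I expect the main obstacle to be step (1)---handling the non-compact case cleanly---and more subtly, organizing step (2) so that a single argument covers all infinite compact (and more generally all infinite locally compact) groups, since the structure theory bifurcates into the Lie case and the pro-finite / projective-limit case. The cleanest route is probably: if $G$ is infinite, then either $G$ has an infinite discrete quotient group, or $G$ has a quotient isomorphic to a compact connected Lie group of positive dimension (such as $\T$); in the latter case one needs a separate direct argument that $L^1_0(\T)$ is not finitely-generated, which can be done by hand using Fourier coefficients (a finitely-generated ideal would be contained in a subspace whose Fourier support is controlled by that of the generators, contradicting that the augmentation ideal contains functions with arbitrary spectral support vanishing only at $0$). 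Once these pieces are assembled, the homomorphism-image lemma and the discrete theorem close the argument.
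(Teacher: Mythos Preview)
Your proposal takes a substantially different route from the paper, and as written it has a genuine gap.

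The paper does \emph{not} reduce to compactness and then invoke structure theory. Instead it splits according to whether $G$ is discrete or non-discrete. For non-discrete $G$, the argument is a single clean observation: the left ideal of $L^1(G)$ generated by $C_c(G)$ is dense, and it is \emph{proper} because every element of it is continuous (convolution against a compactly supported function is continuous) while $L^1(G)\not\subset C(G)$ for non-discrete $G$. A short abstract lemma (Corollary~\ref{2.4}) then shows that a Banach algebra with a dense proper left ideal has no finitely-generated closed maximal left ideal; since $L^1_0(G)$ is closed and of codimension one, it cannot be finitely-generated. For discrete $G$ the paper quotes the measure-algebra result (Theorem~\ref{3.5}): $M_0(G)$ is finitely-generated iff $G$ is compact, and discrete plus compact means finite. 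No quotients, no Lie theory, no projective limits.

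The specific gap in your plan is the Fourier argument for $L^1_0(\T)$. You say a finitely-generated ideal would have Fourier support controlled by that of the generators, but this is false: if $f_1,\dots,f_n$ generate and $g=\sum h_i*f_i$, then $\widehat{g}(k)=\sum \widehat{h_i}(k)\widehat{f_i}(k)$, which restricts $\widehat{g}$ only at the \emph{common} zeros of the $\widehat{f_i}$. Nothing prevents the generators from having full Fourier support on $\Z\setminus\{0\}$, so no contradiction arises from support considerations alone. Since connected compact groups like $\T$ have no nontrivial discrete quotients, your dichotomy forces you through exactly this case, and without a valid argument there the whole reduction collapses. Your step~(1) is also too vague to assess; the paper's dense-proper-ideal trick is what actually makes the non-discrete (in particular non-compact) case immediate.
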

In particular the Dales--\.Zelazko conjecture holds for all group algebras. This result follows from Theorems \ref{3.3} and \ref{3.5}, which establish more general results. In particular, Theorem \ref{3.5} states that $M_0(G)$ is finitely-generated if and only if $G$ is compact and Theorem \ref{3.3} states that, for non-discrete $G$, $L^1(G)$ has no finitely-generated, closed, maximal left ideals at all.

The focus of \S 4 is semigroup algebras, and our main result is the following

\begin{theorem} \label{1.5} 
Let $M$ be a monoid. Then $\lo_{\,0}(M)$ is finitely-generated if and only if $M$ is pseudo-finite.
\end{theorem}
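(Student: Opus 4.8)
The plan is to prove both implications by reducing everything to a single combinatorial picture. Since $M$ is a monoid, $\lo(M)$ is unital with identity $\delta_e$, so $\lo(M)^\sharp = \lo(M)$ and $\lo_{\,0}(M)$ is finitely-generated precisely when $\lo_{\,0}(M) = \lo(M)g_1 + \cdots + \lo(M)g_n$ for some $g_j \in \lo_{\,0}(M)$. The one computation that drives everything is the identity $\delta_u * (\delta_s - \delta_e) = \delta_{us} - \delta_u$ for $u, s \in M$. Given a finite $F \subseteq M$ I would introduce the graph $\Gamma_F$ on the vertex set $M$ in which $u$ is joined to $us$ for every $u \in M$ and $s \in F$, and I will use (verifying, if it is not the literal definition given earlier in \S 4, that the two agree) that $M$ is pseudo-finite exactly when there exist such an $F$ and an $N \in \N$ with $d_{\Gamma_F}(u, e) \le N$ for all $u \in M$. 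The identity above shows that $\lo(M)(\delta_s - \delta_e)$ is, as a subspace of $\lo(M)$, the closed span of the ``edge vectors'' $\delta_{us} - \delta_u$ ($u \in M$), so that the left ideal generated by $\{\delta_s - \delta_e : s \in F\}$ is controlled by the geometry of $\Gamma_F$.

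For the ``if'' direction, suppose $M$ is pseudo-finite with witnessing pair $(F, N)$. For each $s \in M$ I would fix a path $s = v_0, v_1, \dots, v_k = e$ in $\Gamma_F$ with $k \le N$; writing the $i$-th edge as $\{v_i, v_{i+1}\} = \{w_i, w_i s_i\}$ with $s_i \in F$ and telescoping $\delta_s - \delta_e = \sum_i (\delta_{v_i} - \delta_{v_{i+1}})$, the identity yields $\delta_s - \delta_e = \sum_{r \in F} b_{s,r} * (\delta_r - \delta_e)$ with $b_{s,r} \in \C M$ and $\sum_{r \in F} \|b_{s,r}\|_1 \le N$. For a general $f \in \lo_{\,0}(M)$, vanishing of its augmentation gives $f = \sum_{s \in M} f(s)(\delta_s - \delta_e)$; substituting the expansions above and interchanging absolutely convergent sums produces $f = \sum_{r \in F} a_r * (\delta_r - \delta_e)$ with $\sum_{r \in F} \|a_r\|_1 \le N \|f\|_1 < \infty$. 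Hence $\lo_{\,0}(M) = \sum_{r \in F}\lo(M)(\delta_r - \delta_e)$ is finitely-generated.

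The ``only if'' direction is the substantial one, and I would run it in three steps. First, assuming $\lo_{\,0}(M) = \sum_{j=1}^n \lo(M) f_j$, note that $\lo_{\,0}(M)$ is a closed subspace of $\lo(M)$, so the bounded surjection $\lo(M) \oplus \cdots \oplus \lo(M) \to \lo_{\,0}(M)$, $(a_j) \mapsto \sum_j a_j * f_j$, is open; thus there is $C > 0$ such that every $g \in \lo_{\,0}(M)$ has such a representation with $\sum_j \|a_j\|_1 \le C \|g\|_1$. Second, I would replace the $f_j$ by finitely-supported elements: truncating $f_j$ to a large finite part of its support and adjusting by a small multiple of $\delta_e$ gives $g_j \in \lo_{\,0}(M)$ with finite support and $\|g_j - f_j\|_1$ arbitrarily small, and a routine perturbation argument based on the open mapping estimate shows $\lo_{\,0}(M) = \sum_{j=1}^n \lo(M) g_j$ with a new constant $C'$. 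Put $F = \big(\bigcup_j \supp g_j\big) \setminus \{e\}$, a finite set. Third, for each $t \in M$, writing $g_j = \sum_{s \in F} g_j(s)(\delta_s - \delta_e)$ and expanding $\delta_t - \delta_e = \sum_j a_j * g_j$ via $a_j * (\delta_s - \delta_e) = \sum_u a_j(u)(\delta_{us} - \delta_u)$, I would exhibit $\delta_t - \delta_e$ as an $\lo$-combination of edge vectors of $\Gamma_F$ whose coefficients have total modulus at most $C'' := 2 C' \max_j \|g_j\|_1$, uniformly in $t$. To convert this into a distance bound, fix $K \in \N$ and set $\psi(m) = \min\{d_{\Gamma_F}(m, e), K\}$ (with $\min\{\infty, K\} = K$), a function in $\ell^\infty(M) = \lo(M)^*$ that changes by at most $1$ along each edge of $\Gamma_F$; pairing $\psi$ with the edge-vector expansion of $\delta_t - \delta_e$ gives $\psi(t) = \langle \psi, \delta_t - \delta_e\rangle \le C''$, and letting $K \to \infty$ shows $d_{\Gamma_F}(t, e) \le C''$ for all $t$. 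Thus $M$ is pseudo-finite with data $F$ and $N = \lceil C'' \rceil$.

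The main obstacle is the ``only if'' direction, and in particular the fact that the norm of $\lo(M)$ must be used essentially: the purely algebraic statement that $\C_0 M$ is finitely-generated as a left ideal of $\C M$ is strictly weaker than pseudo-finiteness (for example $\C_0 \Z = \C\Z(\delta_1 - \delta_0)$, yet $\Z$ is not pseudo-finite). The norm enters at two delicate points --- the open mapping theorem, which upgrades ``generated'' to ``generated with uniformly bounded coefficients'', and the truncation step, which has to be carried out compatibly with the open mapping constant so that genuinely \emph{finite} support is achieved while surjectivity onto all of $\lo_{\,0}(M)$ is retained. The final Lipschitz-functional argument is then the mechanism that turns a uniform $\lo$-bound on edge-coefficients into a uniform $\Gamma_F$-distance bound; confirming that this graph-theoretic description of pseudo-finiteness matches the definition used in the text is a further, routine but necessary, point.
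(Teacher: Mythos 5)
Your proof is correct, and it takes a genuinely different route from the paper's in both directions. For the ``only if'' direction the paper never extracts a uniform bound on coefficients: it first shows that $X=\bigcup_i \supp h_i$ pseudo-generates $M$ via a summation argument over $H_X$ (Lemma \ref{4.3}), reduces the generators to the form $\delta_e-\delta_u$, proves the qualitative statement that $\bigl(\sigma_n(g*(\delta_e-\delta_x))\bigr)_n\in\lo(\N)$, where $\sigma_n(f)=\sum_{u\in B_n}f(u)$ (Lemma \ref{4.4}), and then, assuming $M$ is not pseudo-finite, exhibits an explicit $f\in\lo_{\,0}(M)$ built from the tails of $\sum 1/j^2$ with $(\sigma_n(f))\notin\lo(\N)$. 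You instead use the open mapping theorem to get representations $\delta_t-\delta_e=\sum_j a_j*g_j$ with $\sum_j\Vert a_j\Vert_1\leq 2C'$ uniformly in $t$, and pair against the truncated distance function $\min\{d_{\Gamma_F}(\cdot,e),K\}$, which is $1$-Lipschitz along edges; this bounds $d_{\Gamma_F}(t,e)$ directly and absorbs the connectedness step (the paper's Lemma \ref{4.3}) for free, since infinite distance also violates the bound. The two mechanisms are cousins --- the paper's $\sigma_n$ are pairings with $\chi_{B_n}$, and your $\psi$ is in essence a sum of such indicators --- but yours is quantitative where the paper's is qualitative-plus-counterexample. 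For the ``if'' direction the paper runs an induction on $k$ over $\Lambda_k=\{f:\supp f\subset B_k\}$, splitting $B_k$ into the pieces $B_{k-1}\cdot x_i$ and $B_{k-1}\cdot x_i^{-1}$ and treating them by separate devices; your telescoping of $\delta_s-\delta_e$ along a chosen ancestry of bounded length, with the uniform bound $\sum_r\Vert b_{s,r}\Vert_1\leq N$ legitimising the interchange of sums, is shorter and arguably cleaner. The one point you flag as needing verification --- that bounded $\Gamma_F$-distance to $e$ is the paper's notion of pseudo-finiteness --- does hold: an ancestry of length $n$ is exactly a walk of length $n-1$ from $u$ to $e$ in your graph, so the two formulations agree.
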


Here, ``pseudo-finite'' is a term defined in \S 4 which we deem too technical to describe here. For groups (and indeed for weakly right-cancellative monoids) pseudo-finiteness coincides with being finite in cardinality, whence the name.

We say that a sequence $(\tau_n) \subset [1, \infty)$ is \textit{tail-preserving} if, for each sequence of complex numbers $(x_n)$, we have $\sum_{n=1}^\infty \tau_n \left \vert \sum_{j=n+1}^\infty x_j \right \vert < \infty$ whenever $\sum_{n=1}^\infty \tau_n \vert x_n \vert < \infty$. This notion is explored in \S 5. In \S 6 we prove the following theorem:

\begin{theorem} \label{1.6} 
Let $G$ be an infinite, finitely-generated group, with finite, symmetric generating set $X$. Let $\omega$ be a radial weight on $G$ with respect to $X$, and write $\tau_n$ for the value that $\omega$ takes on $S_n$. Then $\lo_{\,0}(G, \omega)$ is finitely-generated if and only if $(\tau_n)$ is tail-preserving.
\end{theorem}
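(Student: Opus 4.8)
The plan is to prove the two implications separately, reducing each to a statement about the radial structure of the group. First I would set up notation: for each $n \geq 0$ let $S_n = \{u \in G : |u|_X = n\}$ and let $\sigma_n = |S_n|$ be the sphere sizes, so that $\sum_n \sigma_n < \infty$ is false (the group is infinite) but more importantly the growth of $\sigma_n$ will be irrelevant thanks to radiality; what matters is the sequence $(\tau_n)$ of weight values on spheres. The key observation is that $\lo_{\,0}(G,\omega)$, being an ideal of codimension one, is finitely-generated if and only if it is \emph{singly}-generated as a left ideal (a standard fact: if $I = A^\sharp x_1 + \cdots + A^\sharp x_m$ with $\dim A/I = 1$, one can combine generators — I would cite or reprove the reduction used for the unweighted monoid case in \S 4). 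So the whole theorem becomes: there exists $f \in \lo_{\,0}(G,\omega)$ with $\lo(G,\omega)^\sharp f = \lo_{\,0}(G,\omega)$ if and only if $(\tau_n)$ is tail-preserving.

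For the "if" direction, assuming $(\tau_n)$ is tail-preserving, I would exhibit an explicit generator. The natural candidate is $f = \delta_e - \delta_x$ for a fixed generator $x \in X$, or more robustly $f = \delta_e - \frac{1}{|X|}\sum_{x \in X} \delta_x$; one then needs to show every $g \in \lo_{\,0}(G,\omega)$ lies in $\lo(G,\omega)^\sharp f$. The mechanism is that on the dense subalgebra $\C_0 G$ one can solve $g = h \ast f$ by a "telescoping" / successive-approximation argument that peels off mass sphere by sphere, moving it inward toward $e$ along geodesics; the augmentation-zero condition guarantees the process terminates at each radius, and the resulting $h$ has $\|h\|_\omega$ controlled precisely by a sum of the form $\sum_n \tau_n |\sum_{j > n}(\text{coefficients})|$, which is finite exactly because $(\tau_n)$ is tail-preserving. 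Passing from $\C_0 G$ to all of $\lo_{\,0}(G,\omega)$ requires showing the map $h \mapsto h \ast f$ has closed range equal to $\lo_{\,0}(G,\omega)$, for which I would use that the estimate is uniform and that $\C_0 G$ is dense in $\lo_{\,0}(G,\omega)$, together with an open-mapping-type argument.

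For the "only if" direction, suppose $\lo_{\,0}(G,\omega) = \lo(G,\omega)^\sharp f$ for some $f$ but $(\tau_n)$ is \emph{not} tail-preserving; I would derive a contradiction by producing an element $g \in \lo_{\,0}(G,\omega)$ that provably cannot be written as $a \ast f + \lambda f$ with $a \in \lo(G,\omega)$. Here the failure of tail-preservation gives, by the theory of \S 5, a sequence $(x_n)$ with $\sum \tau_n|x_n| < \infty$ but $\sum \tau_n|\sum_{j>n} x_j| = \infty$; one uses radiality to spread each $x_n$ evenly over the sphere $S_n$ (dividing by $\sigma_n$) to manufacture the target element $g$, whose "primitive" — the element that would have to be convolved to build it — has infinite $\omega$-norm. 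Making this rigorous means understanding the structure of $f$: I expect one shows that $f$ must have nonzero augmentation-adjacent behaviour (essentially $f$ acts invertibly "modulo the augmentation"), so that solving $g = a \ast f$ forces $a$ to be, up to a bounded correction, the telescoped primitive of $g$.

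The main obstacle I anticipate is the "only if" direction — specifically, controlling an \emph{arbitrary} generator $f$ rather than the explicit one. In a noncommutative group the convolution $a \ast f$ mixes spheres in a complicated way, and one must argue that no clever choice of $f$ can circumvent the tail-summation bottleneck. I expect this requires a quotient or local-argument: reduce modulo large spheres, or pass to the abelianisation / to a well-chosen infinite cyclic subgroup along which radial geodesics behave predictably, and show the obstruction already lives there. Getting the norm estimates to survive this reduction, while keeping track of the radial weight values $\tau_n$ exactly (not just up to constants, since tail-preservation is not a coarse condition), is the delicate technical heart of the proof.
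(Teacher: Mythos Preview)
Your reduction to a \emph{single} generator is false, and this undermines both directions of your argument. There is no general principle that a finitely-generated left ideal of codimension one is principal: already for $G=\Z$ with a radial exponential weight, $\lo_{\,0}(\Z,\omega)$ is generated by $\{\delta_0-\delta_1,\ \delta_0-\delta_{-1}\}$ but not by either alone (try to solve $g*(\delta_0-\delta_1)=\delta_0-\delta_{-1}$ and you are forced into a $g$ that is eventually constant on a half-line, hence not in $\lo$ for any weight). Nothing in \S4 supplies such a reduction. So your ``if'' direction, which proposes $\delta_e-\delta_x$ (or an average over $X$) as a single generator, cannot work as stated; the paper instead uses the full set $\{\delta_e-\delta_x : x\in X\}$ and shows, via Lemma~6.2, that each $\delta_e-\delta_u$ decomposes as $\sum_{x\in X} f_x*(\delta_e-\delta_x)$ with the crucial norm bound $\|f_x\|\le D^{-1}\omega(u)$ coming precisely from the tail-preserving inequality~\eqref{eq5.1}. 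Summing these decompositions over an $\lo$-series gives the result directly, with no open-mapping argument needed.

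More seriously, your ``only if'' direction attacks the wrong problem: you try to obstruct an \emph{arbitrary} generator $f$, and you correctly identify this as the main obstacle, but you have no mechanism for it. The paper sidesteps this entirely. The key device you are missing is the linear functional $f\mapsto(\sigma_n(f))_n$ where $\sigma_n(f)=\sum_{u\in B_n}f(u)$: Lemma~6.4 shows that $(\sigma_n(g*(\delta_e-\delta_x)))\in\lo(\tau)$ for every $g$ and every $x\in X$. Combined with the density lemma (Lemma~2.1), which lets one assume the generators lie in $\C_0G$ and hence (by Lemma~6.2(iii)) are of the form $\delta_e-\delta_x$, this forces $(\sigma_n(f))\in\lo(\tau)$ for \emph{every} $f\in\lo_{\,0}(G,\omega)$. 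One then builds a single counterexample $f=\zeta\delta_e-\sum\alpha_n\delta_{y_n}$ with $y_n\in S_n$, where $(\alpha_n)\in\lo(\tau)$ has non-$\tau$-summable tails; for this $f$ one has $\sigma_n(f)=\sum_{j>n}\alpha_j$, giving the contradiction. No analysis of an arbitrary generator, no passage to abelianisation, and no spreading over spheres is needed.
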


Here $S_n$ denotes the set group elements of word length exactly $n$ with respect to the fixed generating set $X$. This implies an affirmative answer to Question \ref{1.3} for many weighted group algebras, but also provides examples where the answer is negative:

\begin{corollary} \label{1.7} 
Let $G$ be a finitely-generated, discrete group, and let $\omega$ be a weight on $G$.
\begin{enumerate}
\item[\rm (i)]
 If $\omega$ is either a radial polynomial weight, or a radial exponential weight of degree strictly less than 1, then $\lo_{\,0}(G, \omega)$ is finitely-generated only if $G$ is finite.
\item[\rm (ii)]
If $\omega$ is a radial exponential weight of degree equal to 1, then $\lo_{\,0}(G, \omega)$ is finitely-generated.
\end{enumerate}
\end{corollary}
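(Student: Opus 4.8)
The plan is to derive both parts directly from Theorem \ref{1.6}: in each case one identifies the sequence $(\tau_n)$ attached to the radial weight and then decides, by an elementary computation, whether it is tail-preserving. First I would fix a finite generating set $X$ of $G$ with respect to which $\omega$ is radial; since replacing $X$ by $X \cup X^{-1}$ alters neither the word-length function nor $\omega$, we may assume $X$ is symmetric, so that Theorem \ref{1.6} applies. If $G$ is finite there is nothing to prove in (i), while in (ii) the algebra $\lo_{\,0}(G,\omega)$ is finite-dimensional, hence certainly finitely-generated; so in both parts I would assume from now on that $G$ is infinite.

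For (i), a radial polynomial weight of degree $\alpha$ takes the value $\tau_n=(1+n)^\alpha$ on $S_n$, and a radial exponential weight with base $c$ and degree $\beta\in(0,1)$ takes the value $\tau_n=c^{\,n^\beta}$. I would show in both cases that $(\tau_n)$ is \emph{not} tail-preserving; Theorem \ref{1.6} then gives that $\lo_{\,0}(G,\omega)$ is not finitely-generated, which is exactly the contrapositive of the assertion. The same witness works for every weight in (i): put $t_n=\tfrac{1}{(n+1)\tau_n}$ for $n\ge0$ and $x_n=t_{n-1}-t_n$ for $n\ge1$. Since $\tau_n\ge1$ we have $t_n\to0$, so $\sum_{j=n+1}^{\infty}x_j=t_n$ by telescoping, and $\tau_n|t_n|=\tfrac{1}{n+1}$, whence $\sum_{n=1}^{\infty}\tau_n|t_n|=\infty$. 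It remains to check that $\sum_{n=1}^{\infty}\tau_n|x_n|<\infty$. A short manipulation gives
$$\tau_n x_n=\frac{1}{n(n+1)}+\frac{1}{n}\Bigl(\frac{\tau_n}{\tau_{n-1}}-1\Bigr)\qquad(n\ge1),$$
so the task reduces to showing $\sum_n\tfrac1n\bigl|\tfrac{\tau_n}{\tau_{n-1}}-1\bigr|<\infty$. For the polynomial weight $\tfrac{\tau_n}{\tau_{n-1}}=(1+\tfrac1n)^\alpha=1+O(\tfrac1n)$, so the $n$th term is $O(n^{-2})$; for the exponential weight $\tfrac{\tau_n}{\tau_{n-1}}=c^{\,n^\beta-(n-1)^\beta}$ with $n^\beta-(n-1)^\beta=O(n^{\beta-1})$, so the $n$th term is $O(n^{\beta-2})$, which is summable because $\beta<1$. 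Hence $(\tau_n)$ is not tail-preserving. This is precisely the step where the hypothesis ``$\beta<1$'' is essential: it forces $\tau_n/\tau_{n-1}\to1$, and at a fast enough rate.

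For (ii), a radial exponential weight of degree $1$ with base $c>1$ takes the value $\tau_n=c^n$ on $S_n$ (when $c=1$ the weight is trivial, a radial polynomial weight of degree $0$, so part (i) then applies). I would verify directly that $(c^n)$ is tail-preserving. Suppose $\sum_n c^n|x_n|<\infty$; then, with $t_n=\sum_{j=n+1}^{\infty}x_j$ and after reversing the order of summation (legitimate since all terms are non-negative),
$$\sum_{n=1}^{\infty}c^n|t_n|\le\sum_{n=1}^{\infty}c^n\sum_{j=n+1}^{\infty}|x_j|=\sum_{j=2}^{\infty}|x_j|\sum_{n=1}^{j-1}c^n\le\frac{1}{c-1}\sum_{j=1}^{\infty}c^j|x_j|<\infty.$$
Thus $(c^n)$ is tail-preserving, and Theorem \ref{1.6} yields that $\lo_{\,0}(G,\omega)$ is finitely-generated.

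Given Theorem \ref{1.6} and the notion of a tail-preserving sequence, this corollary carries little genuine difficulty; the arguments are short estimates on two explicit families of sequences. The only point demanding a little care is the convergence of $\sum_n\tfrac1n\bigl|\tfrac{\tau_n}{\tau_{n-1}}-1\bigr|$ in the sub-exponential case of (i) — which is where the precise hypothesis $\beta<1$ enters — together with the observation that a single witness sequence $(t_n)$ disposes of all the weights in (i) at once.
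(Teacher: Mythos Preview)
Your argument is correct, but it takes a different, more hands-on route than the paper. For (i) the paper simply invokes Lemma~\ref{5.3}(i), which shows that any tail-preserving sequence $(\tau_n)$ must grow at least geometrically, $\tau_{j+1}\ge D(D+1)^{j-1}\tau_1$; since neither $(1+n)^\alpha$ nor $c^{n^\beta}$ with $\beta<1$ has this growth, the sequence cannot be tail-preserving. For (ii) the paper appeals to the intrinsic characterisation in Proposition~\ref{5.2} (condition (b): $\tau_{n+1}\ge D\sum_{j\le n}\tau_j$), which the geometric sequence $c^n$ satisfies with $D=c-1$. By contrast, you bypass both structural results: in (i) you exhibit an explicit $\tau$-summable sequence whose tails are not $\tau$-summable, and in (ii) you verify tail-preservation directly from the definition via a Fubini-type interchange. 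Your approach is self-contained and makes the role of the hypothesis $\beta<1$ very transparent (it is exactly what forces $\tau_n/\tau_{n-1}\to 1$ fast enough for your witness to work), while the paper's approach is shorter and more conceptual, isolating once and for all the growth threshold that separates the two regimes. Your aside about the degenerate case $c=1$ in (ii) is a genuine observation: the paper tacitly assumes $c>1$ there.
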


The proof of this corollary is given in \S 6. Finally, in \S 7, as an application of the theory developed elsewhere in the paper, we construct weights $\omega_1$ and $\omega_2$ on $\Z^+$ and $\Z$ respectively for which the Banach algebras $\ell^{\,1}(\Z^+, \omega_1)$ and $\ell^{\, 1}(\Z, \omega_2)$ fail to satisfy a converse to Gleason's Theorem on analytic structure (Theorem \ref{7.1a}). We believe that these examples illustrate new phenomena.

\section{Preliminary Results}

\noindent
We first fix some notation. We shall denote by $\Z$ the group of integers and by $\Z^+$ the semigroup of non-negative integers $\lbrace 0, 1, 2,\ldots \rbrace$. For us, $\N = \lbrace 1,2,\ldots \rbrace$.

Let $K$ be a locally compact space. We write $C_0(K)$ for the space of all complex-valued, continuous functions on $K$, which vanish at infinity, and $C_c(K)$ for the subspace of $C_0(K)$ of compactly-supported functions. We denote by $C(K)$ the linear space of all continuous functions from $K$ to $\C$. Let $\omega$ be a weight on a locally compact group $G$. Then we define $C_0(G, 1/\omega) = {\lbrace f : G \rightarrow \C : f/\omega \in C_0(G) \rbrace},$  which, as a Banach space, is isometrically isomorphic to $C_0(G)$ when given the norm $\Vert f \Vert_{1/\omega} = \Vert f/\omega \Vert_\infty$. Hence, $M(G, \omega)$ may be identified with the dual space of $C_0(G, 1/ \omega)$, where a measure $\mu \in M(G, \omega)$ acts as a bounded linear functional on $C_0(G, 1/\omega)$ via
$$f \mapsto \int_G f {\rm d} \mu.$$
By the hypothesis that $\omega$ is bounded on compact sets, we have $C_c(G) \subset L^1(G, \omega)$, and in fact $C_c(G)$ is dense in $L^1(G,\omega)$ by \cite[Lemma 1.3.5]{Kaniuth2009}.

We next prove some lemmas about arbitrary Banach algebras.

\begin{lemma} \label{2.1}
Let $A$ be a Banach algebra, let $I$ be a closed left ideal in $A$, and let $E$ be a dense subset of $I$. Suppose that $I$ is finitely-generated. Then $I$ is finitely-generated by elements of $E$.
\end{lemma}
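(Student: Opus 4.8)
The plan is to start with any finite generating set $x_1,\dots,x_n \in I$, so that $I = A^\sharp x_1 + \cdots + A^\sharp x_n$, and to perturb each $x_i$ to a nearby element $y_i \in E$. The natural guess is that if the $y_i$ are close enough to the $x_i$, then $I = A^\sharp y_1 + \cdots + A^\sharp y_n$ as well. To make this precise, consider the left $A^\sharp$-module map $\varphi : (A^\sharp)^n \to I$, $(a_1,\dots,a_n) \mapsto \sum_i a_i x_i$, which is a surjective bounded operator between Banach spaces (here $I$ is a Banach space in its own right, being closed in $A$). By the open mapping theorem $\varphi$ is open, so there is $\delta > 0$ such that every element of $I$ of norm at most $1$ is the image of some tuple in $(A^\sharp)^n$ of norm at most $\delta$; equivalently, each $z \in I$ can be written as $\sum_i a_i x_i$ with $\sum_i \lVert a_i \rVert \le \delta \lVert z \rVert$ (up to adjusting $\delta$).

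Next I would use a standard Neumann-series / successive-approximation argument. Pick $y_i \in E$ with $\lVert x_i - y_i \rVert$ small, to be quantified. Clearly each $y_i \in I$, so $A^\sharp y_1 + \cdots + A^\sharp y_n \subseteq I$. For the reverse inclusion, take $z \in I$; write $z = \sum_i a_i^{(0)} x_i$ with control on $\sum_i \lVert a_i^{(0)}\rVert$, then observe $z - \sum_i a_i^{(0)} y_i = \sum_i a_i^{(0)}(x_i - y_i) \in I$ has norm at most $(\sum_i \lVert a_i^{(0)}\rVert)\max_i \lVert x_i - y_i\rVert$, which is a small fraction of $\lVert z \rVert$ provided the $\lVert x_i - y_i\rVert$ are chosen small relative to $\delta$. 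Iterating, one produces tuples $a_i^{(k)}$ whose partial sums $\sum_k a_i^{(k)}$ converge absolutely in $A^\sharp$, and the limit $b_i := \sum_k a_i^{(k)}$ satisfies $z = \sum_i b_i y_i$. Hence $z \in A^\sharp y_1 + \cdots + A^\sharp y_n$, giving $I = A^\sharp y_1 + \cdots + A^\sharp y_n$ with all generators in $E$.

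The one genuine subtlety — and the step I expect to need the most care — is the convergence bookkeeping: one must check that the "defect" $z^{(k+1)} := z^{(k)} - \sum_i a_i^{(k)} y_i$ stays in $I$ at every stage (it does, since $y_i \in I$) and that $\lVert z^{(k)}\rVert \le \theta^k \lVert z\rVert$ for some $\theta < 1$, so that $\sum_k \lVert a_i^{(k)}\rVert \le \sum_k \delta \lVert z^{(k)}\rVert \le \delta \lVert z\rVert /(1-\theta) < \infty$ and the series defining $b_i$ converges in the Banach algebra $A^\sharp$. Choosing $\max_i \lVert x_i - y_i \rVert < 1/(2\delta n)$, say, makes $\theta = \tfrac12$ work. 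Everything else is routine: surjectivity of $\varphi$ is exactly the hypothesis that $I$ is finitely-generated, openness is the open mapping theorem, and density of $E$ in $I$ supplies the approximants $y_i$.
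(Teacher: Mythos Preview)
Your argument is correct and follows essentially the same idea as the paper's proof: both rely on the open mapping theorem to show that a small perturbation of the surjective map $(a_1,\ldots,a_n)\mapsto \sum a_i x_i$ remains surjective. The paper packages this more concisely by invoking the fact that the surjections form an open subset of $\B((A^\sharp)^n, I)$, whereas you unpack that fact by hand via the Neumann-series iteration; the content is the same.
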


\begin{proof}

Suppose that $I = A^\sharp x_1+ \cdots+A^\sharp x_n$, where $n \in \N$ and $x_1,\ldots,x_n \in I$. Define a map $T : (A^\sharp)^n \rightarrow I$ by 
$$ T:(a_1,\ldots,a_n) \mapsto a_1 x_1 + \cdots +a_n x_n .$$
Then $T$ is a bounded linear surjection, and, since the surjections in $\B((A^\sharp)^n, I)$ form an open set \cite[Lemma 15.3]{Stout1971}, there exists $\eps >0$ such that ${S \in \B((A^\sharp)^n, I)}$ is surjective whenever  $\Vert T - S \Vert < \eps$. Take $y_1,\ldots,y_n \in E$ with 
$$\Vert y_i - x_i \Vert < \eps/n \quad (i = 1,\ldots,n).$$ Then we see that the map $(A^\sharp)^n \rightarrow I$ defined by 
$$(a_1,\ldots,a_n) \mapsto a_1 y_1 + \cdots +a_n y_n$$
is within $\eps$ of $T$ in norm, and hence it is surjective, which implies the result.
\end{proof}

\begin{lemma}  \label{2.2} 
Let $X$ be a Banach space, with dense linear subspace $E$, and let $Y$ be a closed linear subspace of $X$ of codimension one. Then $E \cap Y$ is dense in $Y$.
\end{lemma}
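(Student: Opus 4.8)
The plan is to realise $Y$ as the kernel of a continuous linear functional and then repair an approximating sequence drawn from $E$ by subtracting from it a fixed multiple of a suitably chosen element of $E$.

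First I would observe that, since $Y$ is a closed subspace of $X$ of codimension one, the quotient $X/Y$ is one-dimensional, so the quotient map gives a bounded linear functional $\varphi \in X^*$ with $Y = \ker \varphi$. Next I would note that $E$ cannot be contained in $Y$: otherwise $X = \overline{E} \subseteq \overline{Y} = Y$, contradicting the fact that $Y$ has codimension one. Hence I may fix an element $z \in E$ with $\varphi(z) \neq 0$, and, after rescaling, assume $\varphi(z) = 1$.

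Now, given $y \in Y$, the density of $E$ in $X$ furnishes a sequence $(e_n)$ in $E$ with $e_n \to y$. I would then set $y_n := e_n - \varphi(e_n) z$. Since $E$ is a linear subspace, each $y_n$ lies in $E$; and $\varphi(y_n) = \varphi(e_n) - \varphi(e_n)\varphi(z) = 0$, so $y_n \in Y$, that is, $y_n \in E \cap Y$. Finally, the continuity of $\varphi$ yields $\varphi(e_n) \to \varphi(y) = 0$, and therefore $y_n = e_n - \varphi(e_n) z \to y - 0 = y$. This exhibits $y$ as a limit of elements of $E \cap Y$, proving the density claim.

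I do not anticipate a genuine obstacle here; the one point deserving (minor) care is precisely where the hypothesis that $Y$ is \emph{closed} enters, namely in guaranteeing that $\varphi$ is continuous, which is what makes the correction term $\varphi(e_n) z$ vanish in the limit. Without the continuity of $\varphi$ the argument would collapse.
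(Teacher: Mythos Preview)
Your argument is correct and matches the paper's proof essentially line for line: both write $Y = \ker\varphi$ for a bounded functional $\varphi$, fix an element of $E$ on which $\varphi$ takes the value $1$, and correct an approximant $x\in E$ to $x - \varphi(x)z \in E\cap Y$. The only cosmetic difference is that you phrase the approximation via sequences while the paper uses an $\varepsilon$-estimate.
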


\begin{proof}

Since $Y$ is a closed and codimension one subspace, $Y = \ker \varphi$ for some non-zero bounded linear functional $\varphi$. Since $Y$ is proper and closed, $E$ is not contained in $Y$. Hence there exists $x_0 \in E$ such that $\varphi (x_0) = 1$. 

Now let $y \in Y$, and take $\eps >0$. Then there exists $x \in E$ with $\Vert y - x \Vert < \eps$. Set $z = x - \varphi(x)x_0$. Then $\varphi(z) = 0$, so that $z \in E \cap Y$. Note that $\vert \varphi(x) \vert = \vert \varphi (y - x) \vert \leq \eps \Vert \varphi \Vert$, and hence $\Vert x - z \Vert = \vert \varphi(x) \vert \Vert x_0 \Vert \leq \ \eps \Vert \varphi \Vert \Vert x_0 \Vert$, so that $\Vert y-z \Vert \leq \eps \left( 1+ \Vert \varphi \Vert \Vert x_0 \Vert \right)$. Thus $\overline{E \cap Y} = Y$.
\end{proof}

\begin{lemma}  \label{2.3} 
Let $A$ be a Banach algebra, and let $B$ be a dense left ideal in  $A$. Let $I$ be a closed, maximal left ideal. Then $B \cap I$ is  dense in $I$.
\end{lemma}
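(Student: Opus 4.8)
The plan is to reduce Lemma \ref{2.3} to Lemma \ref{2.2} by showing that $B \cap I$ contains a dense linear subspace of $I$, or, more directly, by identifying $I$ as a closed subspace of small codimension relative to a suitable larger space in which $B$ sits densely. The cleanest route: since $I$ is a closed maximal left ideal, either $B \subseteq I$ or $B + I = A$. In the first case there is nothing to prove, since $B$ is dense in $A$, hence in $I$, and $B \subseteq B \cap I$. So assume $B + I = A$. I would like to conclude that $B \cap I$ is dense in $I$ by applying Lemma \ref{2.2} with $X = A$ (or a suitable subspace), $E = B$, and $Y = I$ — but this requires $I$ to have codimension one in $A$, which need not hold for a general maximal left ideal. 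So a little more care is needed.

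The fix is to work with a single element rather than all of $B$. Pick any $x_0 \in B \setminus I$; such an element exists since $B \not\subseteq I$. Because $I$ is a maximal left ideal and $x_0 \notin I$, the left ideal $A^\sharp x_0 + I$ strictly contains $I$, hence equals $A$ (here I use that $A^\sharp x_0 + I$ is a left ideal containing $I$ properly, and maximality). In particular $A = \C x_0 + I + A x_0$; I will want to arrange a bounded projection-type formula. The key observation is that $I$ has codimension one inside the closed subspace $J := \overline{\C x_0 + I} = \overline{\C x_0 \oplus I}$ — indeed $\C x_0 + I$ is already closed, being the sum of a finite-dimensional subspace and a closed subspace, so $J = \C x_0 + I$ and $I$ has codimension exactly one in $J$. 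Now I would apply Lemma \ref{2.2} with $X = J$, $Y = I$, and $E = B \cap J$, provided I can check that $B \cap J$ is dense in $J$. This last density is where the real work lies: $B$ is dense in $A$, but I need density of $B$ inside the possibly-smaller space $J$, and I only know $x_0 \in B \cap J$.

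To finish, I would argue as follows for the density of $B \cap J$ in $J$. Let $j \in J$, say $j = \lambda x_0 + y$ with $\lambda \in \C$ and $y \in I$, and let $\eps > 0$. Since $B$ is dense in $A$ and $y \in I \subseteq A$, choose $b \in B$ with $\|b - y\|$ small. Then $\lambda x_0 + b \in B$ (as $x_0, b \in B$ and $B$ is a linear subspace), and it approximates $j$. But I must also ensure $\lambda x_0 + b \in J$; this is not automatic since $b$ need not lie in $J$. Here is the repair: instead of approximating $y$ by an arbitrary $b \in B$, I should first use Lemma \ref{2.2} in the form ``$B \cap I$ dense in $I$'' — but that is exactly what we are trying to prove, so this is circular. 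The honest route is therefore to apply Lemma \ref{2.2} directly with $X = A$, $Y = I$, and $E = B$, which is legitimate only when $\dim(A/I) = 1$; to handle the general case I instead pass to the quotient. Consider the quotient Banach space $A/I$ with quotient map $q$; then $q(B)$ is a dense subspace of $A/I$ (continuous surjective-image of dense is dense), and $q(B) \ne \{0\}$. Pick $x_0 \in B$ with $q(x_0) \ne 0$. For $y \in I$ and $\eps>0$, density of $B$ gives $b \in B$ with $\|b-y\| < \eps$; then $q(b) = q(b-y)$ has norm $< \eps$, so $b' := b - c\,x_0 \in B$ for a suitable scalar... — no, $q(x_0)$ spanning a line doesn't let us kill $q(b)$ exactly unless $A/I$ is one-dimensional. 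The genuinely correct and simple argument, which I would write up, is: $I$ is closed of codimension one \emph{in the Banach space $\overline{\C x_0 + I} = \C x_0 + I =: J$}, so Lemma \ref{2.2} applies once we know $B \cap J$ is dense in $J$; and $B \cap J$ is dense in $J$ because given $\lambda x_0 + y \in J$ with $y \in I$, and noting $\C x_0 \subseteq B$, it suffices to approximate $y \in I$ by elements of $B \cap I$ — which, by maximality again, we can: $B \cap I$ is dense in $I$ since $B$ is dense in $A$ and... The main obstacle, and the step I expect the author to handle by a clean trick I am circling around, is precisely establishing that the dense left ideal $B$ meets the closed maximal left ideal $I$ in a dense subset of $I$ without assuming codimension one globally; I anticipate the author reduces to the codimension-one situation inside $\C x_0 + I$ and invokes Lemma \ref{2.2} there, having first used maximality to produce $x_0 \in B \setminus I$ and the closedness of $\C x_0 + I$.
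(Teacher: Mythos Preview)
Your proposal never reaches a complete argument; it circles around the codimension-one reduction and correctly identifies that Lemma~\ref{2.2} alone does not suffice, but it does not find the missing ingredient. The gap is that you only ever try to correct an approximation $b \approx y \in I$ by subtracting a \emph{scalar} multiple of $x_0$, which forces you into the one-dimensional quotient situation you cannot guarantee.

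The key idea you are missing is to exploit that $B$ is a left \emph{ideal}, not merely a dense subspace: for any $a_0 \in A$ one has $a_0 b_0 \in B$. The paper's argument runs as follows. Pick $b_0 \in B \setminus I$ and split into two cases according to whether $A b_0 \subseteq I$ or not. If $A b_0 \subseteq I$, then $\C b_0 + I$ is a left ideal strictly containing $I$, so $I$ has codimension one and Lemma~\ref{2.2} applies directly. Otherwise $A b_0 + I = A$, and the map $T \colon A \to A/I$, $a \mapsto a b_0 + I$, is a bounded linear surjection; the open mapping theorem gives a constant $C$ such that every coset is hit by some $a_0$ with $\|a_0\| \le C \|a_0 b_0 + I\|$. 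Now given $y \in I$ and $\eps > 0$, choose $b \in B$ with $\|b - y\| < \eps$; then $\|b + I\| < \eps$, so there is $a_0 \in A$ with $\|a_0\| \le C\eps$ and $a_0 b_0 - b \in I$. Set $c = b - a_0 b_0$: this lies in $I$ by construction and in $B$ because $B$ is a left ideal, and $\|y - c\| \le \eps(1 + C\|b_0\|)$.

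In short, the correction term is $a_0 b_0$ with $a_0$ ranging over all of $A$, not $\lambda b_0$ with $\lambda \in \C$; this is what makes the open mapping theorem applicable and bypasses any codimension hypothesis on $I$.
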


\begin{proof}

As $I$ is a closed, maximal left ideal and $B$ is dense in $A$, $B$ is not contained in $I$, so that we may choose $b_0 \in B \setminus I$. Consider the left ideal $Ab_0 +I$ of $A$. As $I$ is maximal, either $Ab_0 + I = I$ or $Ab_0+I = A$. 

In the first case, we see that $ab_0 \in I$ for every $a \in A$, so that $\C b_0 +I$ is a left ideal strictly containing $I$. This forces $\C b_0 +I = A$, so that $I$ has codimension one. Therefore, in this case, the result follows from Lemma \ref{2.2}.

Hence we suppose that $Ab_0 + I = A$. Define a map $T:A \rightarrow A/I$ by $T: a \mapsto ab_0 +I$. Then $T$ is a bounded linear surjection between Banach spaces, so that, by the open mapping theorem, there exists a constant $C>0$ such that, for every $y \in A/I$, there exists $x \in A$ with $\Vert x \Vert \leq C \Vert y \Vert$ and $Tx = y$. 

Let $a \in I$ and $\eps >0$ be arbitrary. There exists $b \in B$ with $\Vert a - b \Vert < \eps$. It follows that $\Vert b+I \Vert_{A/I} \leq \eps$, so we can find $a_0 \in A$ with $\Vert a_0 \Vert \leq C \eps$ and $Ta_0 = a_0b_0 +I = b+I$. Let $c = b-a_0b_0$. Then $c \in B \cap I$, because $B$ is a left ideal, and $\Vert b - c \Vert = \Vert a_0 b_0 \Vert \leq C \eps \Vert b_0 \Vert.$ Hence  $\Vert a - c \Vert \leq  \eps (1+ C \Vert b_0 \Vert )$. As $a$ and $\eps$ were arbitrary, the result follows.
\end{proof}

\begin{corollary}  \label{2.4} 
Let $A$ be a Banach algebra with a dense, proper left ideal. Then:
\begin{enumerate}
\item[\rm (i)] $A$ has no finitely-generated, closed, maximal left ideals;
\item[\rm (ii)]$A$ has no finitely-generated, closed left ideals of finite codimension.
\end{enumerate}
\end{corollary}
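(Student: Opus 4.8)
The plan is to prove both parts by the same mechanism. Suppose $A$ has a dense proper left ideal $B$, and let $I$ be a finitely-generated closed left ideal of the relevant type. I will show that $I$ (or a suitable enlargement of it to a maximal left ideal) must actually be \emph{contained} in $B$; since $B$ is proper, this will force $I=B$ (or $M=B$), and comparing the closed ideal with the dense ideal $B$ will give $B=A$, a contradiction. The key mechanical input is that whenever a left ideal is generated by elements $x_1,\dots,x_n$ of a left ideal $B$, the whole ideal lies in $B$, because $A^\sharp x_i=\C x_i+Ax_i\subseteq B$; combined with Lemmas \ref{2.1} and \ref{2.3}, this does almost all the work.

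For (i), let $I$ be a finitely-generated, closed, maximal left ideal. By Lemma \ref{2.3}, $B\cap I$ is dense in $I$, and since $I$ is closed and finitely-generated, Lemma \ref{2.1} supplies generators $x_1,\dots,x_n\in B\cap I$ with $I=A^\sharp x_1+\cdots+A^\sharp x_n$. Each $x_i$ lies in the left ideal $B$, so $A^\sharp x_i\subseteq B$, whence $I\subseteq B$. As $I$ is a maximal (in particular proper) left ideal and $I\subseteq B\subsetneq A$, maximality gives $I=B$; but then the closed set $I$ coincides with the dense set $B$, forcing $I=A$, which contradicts the properness of $I$.

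For (ii), let $I$ be a finitely-generated, closed left ideal of finite codimension. If $I=A$, then applying Lemma \ref{2.1} with dense subset $E=B$ shows $A$ is generated by elements of $B$, so $A\subseteq B$ exactly as above, which is impossible. So assume $I\subsetneq A$. Then $A/I$ is a nonzero finite-dimensional left $A$-module, hence has a maximal submodule (take a proper submodule of largest dimension), corresponding to a left ideal $M$ with $I\subsetneq M\subsetneq A$ that is maximal as a left ideal of $A$. Since $M/I$ is finite-dimensional, hence closed in $A/I$, and $I$ is closed, $M$ is closed. Writing $I=\sum_i A^\sharp x_i$ and choosing $y_1,\dots,y_m\in M$ whose cosets span $M/I$, one verifies $M=\sum_i A^\sharp x_i+\sum_j A^\sharp y_j$, so $M$ is a finitely-generated, closed, maximal left ideal, contradicting part (i).

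The only genuinely delicate point is in (ii): one must ensure that enlarging $I$ to a maximal left ideal preserves finite generation. This is exactly why I pass to a maximal \emph{submodule} of the finite-dimensional module $A/I$, so that $M/I$ is finite-dimensional and $M$ is obtained from $I$ by adjoining finitely many generators, rather than invoking an arbitrary Zorn's-lemma maximal left ideal containing $I$. (An alternative, avoiding module language, is to prove directly by the method of Lemma \ref{2.2} that $B\cap I$ is dense in $I$ whenever $I$ is a closed left ideal of finite codimension, and then argue verbatim as in (i); the same point is the crux either way.) The remaining verifications — that $A^\sharp x_i\subseteq B$ when $x_i\in B$, that a nonzero finite-dimensional module has a maximal submodule, and that lifting a basis of $M/I$ augments generators of $I$ to generators of $M$ — are all routine.
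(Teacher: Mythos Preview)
Your proof is correct and follows essentially the same approach as the paper's: part (i) is identical, and for (ii) both you and the paper enlarge $I$ to a closed maximal left ideal $M$ and observe that $M$ is finitely-generated by the generators of $I$ together with a basis (or lifts of a basis) for the finite-dimensional space $M/I$, then invoke (i). You are slightly more careful than the paper in two places---you explicitly dispose of the case $I=A$, and you justify why a closed maximal left ideal containing $I$ exists and is closed (via a maximal submodule of the finite-dimensional module $A/I$)---whereas the paper simply asserts the existence of such an $M$.
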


\begin{proof}
(i) Assume towards contradiction that $I$ is a finitely-generated, closed, maximal left ideal in $A$. The algebra $A$ has a proper, dense left ideal $B$.Then, by Lemma \ref{2.3}, $B \cap I$ is dense in $I$, so that, by Lemma \ref{2.1}, we can find a finite set of generators for $I$ from within $B$. But then, as $B$ is a left ideal, this forces $I \subset B$, and hence $I = B$ by the maximality of $I$. But $I$ is closed, whereas $B$ is dense, and both are proper, so we have arrived at a contradiction.

\smallskip

(ii) Let $I$ be a proper, closed left ideal of finite codimension. Then $I$ is contained in some closed maximal left ideal $M$. We  may write $M = I \oplus E$, as linear spaces, for some finite-dimensional space $E \subset A$. If $I$ were finitely-generated, then the generators together with a basis for $E$ would give a finite generating set for $M$, contradicting (i). Hence $I$ cannot be finitely-generated.
\end{proof}

We note that the above corollary is of limited use since its hypothesis cannot be satisfied in a unital Banach algebra. However, in the non-unital setting it is quite effective, and we shall make use of it in \S 3. An example of a Banach algebra satisfying the hypothesis of Corollary \ref{2.4} coming from outside harmonic analysis is the algebra of approximable operators on an infinite-dimensional Banach space.

\section{The Case of a Non-Discrete Locally Compact Group}

\noindent
In this section, we shall consider Question \ref{1.3} for $L^1(G, \omega)$ and $M(G, \omega)$, where $G$ is a non-discrete, locally compact group and $\omega$ is a weight on $G$. The first result implies that, if $L^1(G) \subset C(G)$, then $G$ is discrete.

\begin{lemma} \label{3.2} 
Let $G$ be a locally compact group. Suppose that, for every precompact subset $A$ of $G$, the function $\chi_A$ is equal to a continuous function almost everywhere. Then $G$ is discrete.
\end{lemma}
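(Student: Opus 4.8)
The plan is to assume $G$ is not discrete and contradict the hypothesis. The first step is a reformulation. If $A\subseteq G$ is precompact and $\chi_A=g$ almost everywhere with $g\in C(G)$, then $g$ takes only the values $0$ and $1$: the set $g^{-1}(\{0,1\})$ is closed and conull, hence dense (nonempty open sets carry positive Haar measure), hence all of $G$; consequently $g=\chi_U$ where $U:=g^{-1}(\{1\})$ is clopen and $m(U\triangle A)=0$. Thus the hypothesis asserts exactly that \emph{every precompact Borel subset of $G$ coincides, off an $m$-null set, with a clopen subset of $G$}. Since a locally compact group in which $\{e\}$ has positive Haar measure is discrete, our standing assumption makes $m$ non-atomic, and it is this measure-theoretic flexibility, played against the rigidity of clopen sets, that should produce a contradiction.

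Next I would reduce to the case that $G$ is second countable. Let $H$ be the open subgroup of $G$ generated by a compact symmetric neighbourhood of $e$; then $H$ is open, $\sigma$-compact, non-discrete, and has the property above (precompact subsets of $H$ are precompact in $G$, and $U\cap H$ is clopen in $H$ whenever $U$ is clopen in $G$), so it suffices to treat $\sigma$-compact $G$. By the Kakutani–Kodaira theorem there is a compact normal subgroup $N$ with $G/N$ second countable; write $q\colon G\to G/N$. The hypothesis descends to $G/N$: a precompact Borel $\bar A\subseteq G/N$ has precompact, $N$-saturated preimage $A=q^{-1}(\bar A)$ (as $N$ is compact, preimages of compact sets are compact), and if $A$ agrees off a null set with a clopen $U$, then for $n\in N$ the clopen set $nU$ agrees off a null set with $nA=A$, forcing $nU=U$ (the symmetric difference of two clopen sets is open, so if null it is empty), and likewise under right translation; hence $U=q^{-1}(\bar U)$ for some clopen $\bar U\subseteq G/N$, and $m_{G/N}(\bar U\triangle\bar A)=0$ by Weil's formula. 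If $G/N$ is non-discrete we have reached the second-countable case. Otherwise $N$ is open, hence an infinite compact group, and therefore admits an infinite --- equivalently, non-discrete --- second-countable quotient group (a standard consequence of the structure theory of compact groups); the hypothesis descends to it by the same argument, and again we are reduced to a non-discrete, second-countable, locally compact group.

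Now for the core case: $G$ is non-discrete, second countable, locally compact, and every precompact Borel set coincides off a null set with a clopen set. Fix a precompact open neighbourhood $V$ of $e$ and a decreasing sequence $V=W_1\supseteq W_2\supseteq\cdots$ of open neighbourhoods of $e$ forming a basis at $e$, so $\bigcap_k W_k=\{e\}$. Since $\sum_{k\ge1}m(W_k\setminus W_{k+1})=m(V)>0$ and $m(W_k)\to m(\{e\})=0$, infinitely many of the ``shells'' $W_k\setminus W_{k+1}$ have positive measure; let $k_1<k_2<\cdots$ list the indices with $m(W_k\setminus W_{k+1})>0$, and set
\[
A:=\bigcup_{i\ge1}\bigl(W_{k_{2i}}\setminus W_{k_{2i}+1}\bigr),\qquad B:=(V\setminus\{e\})\setminus A .
\]
Then $A$ and $B$ partition $V\setminus\{e\}$, both are precompact, and for each $j$ both $A$ and $B$ contain a positive-measure shell $W_k\setminus W_{k+1}$ with $W_k\subseteq W_j$ (namely an odd- resp.\ even-numbered one with large index); since $\{W_j\}$ is a basis at $e$, this shows $e$ lies in the measure-theoretic support of both $m|_A$ and $m|_B$, i.e.\ every open neighbourhood of $e$ meets each of $A$ and $B$ in positive measure. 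Choose clopen $U_A,U_B,U_V$ agreeing off null sets with $A,B,V$. Then $U_A\cap U_B$ is null, hence empty; $U_V$ and $U_A\sqcup U_B$ agree off a null set, hence $U_V=U_A\sqcup U_B$; and $e\in U_V$, since otherwise an open neighbourhood of $e$ would be disjoint from $U_V$, forcing a nonempty open set to be null (recall $\chi_V=\chi_{U_V}$ a.e.). So $e\in U_A$ or $e\in U_B$, say $e\in U_A$. Picking an open $W$ with $e\in W\subseteq U_A\cap V$ we get $\chi_A=\chi_{U_A}=1$ almost everywhere on $W$, hence $m(W\cap B)=0$ because $B\cap A=\emptyset$ and $B\subseteq V\setminus\{e\}$. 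But $e\in\supp(m|_B)$ forces $m(W\cap B)>0$, a contradiction (the case $e\in U_B$ is symmetric). This completes the argument.

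The essential point is the construction in the last paragraph --- splitting a punctured precompact neighbourhood of $e$ into two Borel pieces, each carrying mass arbitrarily near $e$, and then observing that no clopen set can imitate such a splitting --- and this is short once one sees it. I expect the main technical obstacle to be the reduction to second countability: the construction requires a countable neighbourhood basis at $e$, which a general locally compact group lacks, so one must first descend to an open $\sigma$-compact subgroup and then, via Kakutani–Kodaira together with the structure theory of compact groups, to a second-countable quotient, checking at each stage that the hypothesis is inherited.
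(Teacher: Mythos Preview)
Your proof is correct but takes a substantially different route from the paper's. The paper's argument is much shorter: it invokes the known fact (cited from \cite{DDLS} and \cite{Rajagopalan}) that a non-discrete locally compact group is never extremely disconnected, which immediately produces disjoint open sets $A,B$ with a common boundary point $x_0$; after shrinking $A$ to be precompact, one writes $\chi_A=f+g$ with $f$ continuous and $g$ supported on an $m$-null (hence nowhere-dense) set, and then picks nets in $A\setminus\supp g$ and in $B\setminus\supp g$ both converging to $x_0$, on which $f$ takes the constant values $1$ and $0$ respectively --- contradicting continuity of $f$. No reduction to second countability is needed.

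Your reformulation (that the hypothesis says precisely that every precompact Borel set agrees a.e.\ with a clopen set) is clean, and your core construction in the second-countable case is effectively a direct, measure-theoretic proof that such a group cannot be extremely disconnected. The price is the reduction step: passing to an open $\sigma$-compact subgroup, applying Kakutani--Kodaira, and --- when the resulting quotient happens to be discrete --- invoking the structure theory of compact groups to extract an infinite second-countable quotient of the kernel, checking at each stage that the hypothesis is inherited. All of this is correct, but it is considerably heavier machinery than the paper uses. In short: the paper outsources the topological input to a single citation and finishes in a few lines; your argument is self-contained at that point but pays for it with a nontrivial reduction.
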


\begin{proof}

Assume to the contrary that $G$ is not discrete. Then by \cite[Corollary 4.4.4]{DDLS}, or \cite[Theorem 1]{Rajagopalan}, $G$ cannot be extremely disconnected, so that there are disjoint open sets $A$ and $B$ and $x_0 \in G$ such that $x_0 \in \overline{A} \cap \overline{B}$. By intersecting with a precompact open neighbourhood of $x_0$, we may further assume that $A$ is precompact, and thus of finite measure.

Consider the function $h = \chi_A \in L^1(G)$. Then, by hypothesis, there is a continuous function $f$ and a measurable function $g$ such that $\supp g$ is an $m$-null set, with the property that $h = f+g$. In particular, $\supp g$ must have empty interior, so, for any open neighbourhood $U$ of $x_0$, we can choose $x_U \in U \cap A$ such that $x_U \notin \supp g$. Then $(x_U)$ is a net contained in $A \setminus \supp g$ converging to $x_0$. Similarly, we may find a net $(y_U)$ contained in $B \setminus \supp g$ converging to $x_0$. Then $f(x_U) = h(x_U) = 1$ for all $U$, whereas $f(y_U) = h
(y_U) = 0$ for all $U$. As both nets have the same limit, this contradicts the  continuity of $f$.
\end{proof}

\begin{theorem} \label{3.3} 
Let $G$ be a non-discrete, locally compact group, and let $\omega$ be a weight on $G$. Then $L^1(G, \omega)$ has no finitely-generated, closed, maximal left ideals, and no finitely-generated, closed left ideals of finite codimension.
\end{theorem}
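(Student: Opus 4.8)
The plan is to derive this from Corollary~\ref{2.4}: that result shows a Banach algebra possessing a dense, proper left ideal has neither finitely-generated, closed, maximal left ideals nor finitely-generated, closed left ideals of finite codimension, so it suffices to exhibit such an ideal in $L^1(G, \omega)$. The candidate I would take is
$$J := \lin \{\, f * g : f \in L^1(G, \omega),\ g \in C_c(G) \,\},$$
the linear span of all convolutions of an element of $L^1(G, \omega)$ with a compactly-supported continuous function. Since $C_c(G) \subseteq L^1(G, \omega)$ and convolution is associative, $J$ is a linear subspace satisfying $L^1(G, \omega) * J \subseteq J$, hence a left ideal.

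First I would check that $J$ is dense. For this, recall that $L^1(G, \omega)$ has a bounded approximate identity: fixing a compact neighbourhood $V$ of $e$ with $\omega \leq M$ on $V$, the normalised indicators $e_U := m(U)^{-1} \chi_U$, with $U$ ranging over neighbourhoods of $e$ contained in $V$ directed by reverse inclusion, satisfy $\Vert e_U \Vert_\omega \leq M$ and, for each $g \in C_c(G)$, $\Vert e_U * g - g \Vert_\omega \leq \sup_{y \in U} \Vert L_y g - g \Vert_\omega \to 0$, the latter by continuity of translation on $C_c(G)$ in the $\Vert \cdot \Vert_\omega$-norm (which holds since $\omega$ is bounded on the relevant compact set). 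As $e_U * g \in J$, this gives $C_c(G) \subseteq \overline{J}$, and since $C_c(G)$ is dense in $L^1(G, \omega)$ we conclude $\overline{J} = L^1(G, \omega)$.

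Next I would show $J$ is proper. The key point is that every element of $J$ agrees almost everywhere with a continuous function: for $f \in L^1(G, \omega) \subseteq L^1(G)$ and $g \in C_c(G)$, the uniform continuity of $g$ together with the fact that, for $x$ ranging over a fixed compact set, the non-vanishing part of the integrand $y \mapsto f(y) g(y^{-1} x)$ is supported in a fixed compact set, shows that $f * g$ is continuous (indeed $f * g \in C_0(G)$); finite sums of such functions remain continuous. Now suppose for contradiction that $J = L^1(G, \omega)$. Since $\omega$ is bounded on compact sets, $\chi_A \in L^1(G, \omega)$ for every precompact $A \subseteq G$, so $\chi_A$ would coincide almost everywhere with a continuous function for every such $A$; by Lemma~\ref{3.2} this forces $G$ to be discrete, contradicting the hypothesis. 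Hence $J$ is a proper, dense left ideal, and Corollary~\ref{2.4} yields both assertions.

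The only delicate points are the two classical facts invoked above --- that $L^1(G, \omega)$ has a bounded approximate identity (equivalently, that $J$ is dense), and that the convolution of an $L^1$-function with a $C_c$-function is continuous --- both of which rest on $\omega$ being a Borel weight bounded on compact sets. Beyond these, the argument is a direct appeal to Lemma~\ref{3.2} and Corollary~\ref{2.4}, so I do not anticipate a substantial obstacle; the only thing to be careful about is to phrase the continuity argument for $f*g$ so that it does not secretly assume metrizability of $G$.
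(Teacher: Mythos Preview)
Your proof is correct and follows essentially the same strategy as the paper: exhibit a dense, proper left ideal consisting of (almost-everywhere) continuous functions, invoke Lemma~\ref{3.2} for properness, and finish with Corollary~\ref{2.4}. The one difference worth noting is that the paper takes $J = L^1(G,\omega)*C_c(G) + C_c(G)$, the full left ideal \emph{generated} by $C_c(G)$; since this visibly contains $C_c(G)$, density is immediate and your approximate-identity argument becomes unnecessary. Your smaller ideal $J = \lin\{f*g\}$ works just as well, but you pay for it with the extra step; the paper also outsources the continuity of elements of $J$ to a reference (\cite[Theorem 3.3.13(i)]{D}) rather than arguing it directly as you do.
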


\begin{proof}
Let $J = L^1(G, \omega)*C_c(G) + C_c(G)$ be the left ideal of $L^1(G, \omega)$ generated by $C_c(G)$. By \cite[Theorem 3.3.13 (i)]{D}, every element of $J$ is continuous, so that, by the previous lemma, $J$ is proper, and of course it is also dense. The result now follows from Corollary \ref{2.4}.
\end{proof}

When $G$ is a compact group, $L^2(G)$ is a Banach algebra under convolution. A trivial modification of the previous argument shows that, when $G$ is infinite and compact, $L^2(G)$ has no closed, finitely-generated maximal left ideals.

We now turn to the measure algebra. We shall exploit its duality with $C_0(G)$, so we first fix the following notation. Let $X$ be a Banach space with dual space $X'$ and let $E \subset X$ and $F \subset X'$. We write $$E^\perp = \lbrace \lambda \in  X': \langle x, \lambda \rangle = 0, \, x \in E \rbrace, \quad F_\perp = \lbrace x \in X: \langle x, \lambda \rangle = 0, \, \lambda \in F \rbrace.$$
It is well known that, for $E$ and $F$ as above, we have
\begin{equation} \label{1b}
 (F_\perp)^\perp = \overline{{\rm span}}^{w^*} F, \qquad (E^\perp)_\perp = \overline{{\rm span}} \, E.
\end{equation}

The weak*-closed left ideals of a weighted measure algebra can be characterised as follows. Analogous characterisations exist for the weak*-closed right and two-sided ideals.
\begin{lemma} \label{3.4} 
Let $G$ be a locally compact group, and let $\omega$ be  a weight on $G$. Then there is a bijective correspondence between the weak*-closed left ideals in $M(G, \omega)$ and the norm-closed subspaces of $C_0(G, 1/ \omega)$ invariant under left translation. This correspondence is given by
$$E \mapsto E^\perp,$$
for $E$ a closed subspace of $C_0(G, 1/ \omega)$ invariant under left translation. 
\end{lemma}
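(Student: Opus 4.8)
The plan is to establish the bijective correspondence by exhibiting the map $E \mapsto E^\perp$ and its inverse $W \mapsto W_\perp$, and verifying that these land in the claimed classes and are mutually inverse. First I would recall the identification $M(G,\omega) = C_0(G,1/\omega)'$ fixed in the preliminaries, so that weak*-closed subspaces of $M(G,\omega)$ correspond bijectively, via $W \mapsto W_\perp$ and $E \mapsto E^\perp$, to norm-closed subspaces $E$ of $C_0(G,1/\omega)$; this is immediate from the bipolar relations \eqref{1b}, since a weak*-closed subspace $W$ satisfies $(W_\perp)^\perp = \overline{\mathrm{span}}^{w^*} W = W$, and a norm-closed subspace $E$ satisfies $(E^\perp)_\perp = \overline{\mathrm{span}}\,E = E$. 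So the only thing to check is that, under this standard duality, the weak*-closed \emph{left ideals} of $M(G,\omega)$ correspond exactly to the norm-closed subspaces of $C_0(G,1/\omega)$ that are \emph{invariant under left translation}.

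The key computation is to relate the module action of $M(G,\omega)$ on itself to a translation action on $C_0(G,1/\omega)$. For $\mu \in M(G,\omega)$ and $f \in C_0(G,1/\omega)$, one computes $\langle \nu \mu, f\rangle = \langle \mu, \mu' \!\cdot\! f\rangle$ for an appropriate action of $\mu'$, and the point is that for a point mass $\mu = \delta_s$ this action is (left) translation of $f$ by $s$: concretely $\langle \nu * \delta_s, f \rangle = \int_G f(ts)\dd\nu(t)$, so that the dual action of $\delta_s$ on $f$ sends $f$ to the function $t \mapsto f(ts)$, which lies in $C_0(G,1/\omega)$ because $\omega$ is a weight (one checks $\|L_s f\|_{1/\omega} \le \omega(s)\|f\|_{1/\omega}$). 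Now suppose $W$ is a weak*-closed left ideal and set $E = W_\perp$; for $f \in E$ and $s \in G$ and any $\nu \in W$ we have $\nu * \delta_s \in W$ (a left ideal absorbs multiplication on the right as well — wait, no: a \emph{left} ideal $W$ satisfies $M(G,\omega)W \subseteq W$, i.e.\ absorbs left multiplication, so $\delta_s * \nu \in W$), hence $\langle \delta_s * \nu, f\rangle = 0$. Computing $\langle \delta_s * \nu, f \rangle = \int_G f(su)\dd\nu(u)$ shows this equals $\langle \nu, R_s f\rangle$ where $(R_s f)(u) = f(su)$ is the \emph{right} translate; so $E$ is invariant under the translations $R_s$. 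Conversely, if $E$ is a norm-closed, translation-invariant subspace, the same identity together with the fact that $\{\delta_s : s \in G\}$ spans a weak*-dense subspace of $M(G,\omega)$ — one approximates an arbitrary $\mu$ weak* by finite combinations of point masses, using that $C_0(G,1/\omega)$-functions are continuous — gives that $E^\perp$ is closed under left multiplication by all of $M(G,\omega)$, i.e.\ $E^\perp$ is a weak*-closed left ideal.

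The main technical point, and the step I expect to require the most care, is the weak*-density of $\mathrm{span}\{\delta_s : s \in G\}$ in $M(G,\omega)$ used in the converse direction — equivalently, that if $f \in C_0(G,1/\omega)$ satisfies $f(s) = \langle \delta_s, f\rangle = 0$ for all $s$ then $f = 0$, which is trivial, but one actually needs the stronger statement that translation-invariance of $E$ under \emph{point masses} already forces $\mu * E^{\perp}$-type absorption for all $\mu \in M(G,\omega)$; this follows by a weak*-approximation / integration argument, writing $\langle \mu * \nu, f\rangle = \int_G \langle \delta_s * \nu, f\rangle \dd\mu(s)$ and using that the integrand vanishes for each $s$. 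One should also double-check the bookkeeping on left versus right translation: whether a \emph{left} ideal corresponds to left-translation-invariance or right-translation-invariance of the annihilator is a matter of convention in how $M(G,\omega)$ acts on $C_0(G,1/\omega)$, and the statement as written asserts left-translation-invariance, so the module action must be normalised accordingly (i.e.\ via $\langle \mu\nu, f\rangle = \langle \nu, \mu \cdot f\rangle$ with $(\delta_s \cdot f)(t) = f(s^{-1}t)$, say); I would simply fix that convention at the outset and carry it through consistently. Everything else — closedness under scalar multiples, the bipolar identities, the boundedness estimate $\|L_s f\|_{1/\omega} \le \omega(s)\|f\|_{1/\omega}$ — is routine.
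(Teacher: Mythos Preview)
Your proposal is correct and follows essentially the same route as the paper: use the bipolar identities \eqref{1b} to get the bijection between weak*-closed subspaces of $M(G,\omega)$ and norm-closed subspaces of $C_0(G,1/\omega)$, then match left ideals with translation-invariant subspaces via the identity $\langle \delta_y * \mu, f\rangle = \int_G f(yx)\dd\mu(x)$ together with weak*-density of the discrete measures. Your hesitation about left versus right translation is only terminological: the paper's ``left translation'' is precisely your map $R_s\colon f \mapsto (x \mapsto f(sx))$, so your computation already matches and no renormalisation of the module action is needed.
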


\begin{proof}
Let $E$ be a closed subspace of $C_0(G, 1/\omega)$, invariant under left translation. That $E^\perp$ is weak*-closed is clear. We show that it is a left ideal. Let $\mu \in E^\perp$. Then for all $f \in E$ and $y \in G$ we have
\begin{equation} \label{eq3.4} 
\int_G f(yx){\rm \, d}\mu(x) = \int_G f(x) {\rm \, d}(\delta_y*\mu)(x) = 0.
\end{equation}
Hence $\delta_y*\mu \in E^\perp$ for all $y \in G$. That $E^\perp$ is a left ideal now follows from weak*-density of the discrete measures in $M(G, \omega)$. 

Now suppose that $I$ is a weak*-closed left ideal in $M(G, \omega)$. Set $E = I_\perp$. Then, by \eqref{1b}, $E^\perp = I$. The linear subspace $E$ is clearly closed, and, for ${y \in G}, {\mu \in I}$ and $f \in C_0(G, 1/\omega)$, we have $\delta_y*\mu \in I$, so that, by \eqref{eq3.4}, $\delta_y*f \in E$. Hence $E$ is left-translation-invariant.

We have shown that the correspondence is well-defined and surjective. To see that it is injective, use \eqref{1b}.
\end{proof}

\begin{lemma} \label{0.2} 
Let $G$ be  a locally compact group. Then $M_0(G)$ is weak*-closed if and only if $G$ is compact.
\end{lemma}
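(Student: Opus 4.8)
The plan is to exploit the standard duality $M(G) = C_0(G)'$, with pairing $\langle \mu, f \rangle = \int_G f \dd\mu$, and to observe that the augmentation character $\mu \mapsto \mu(G)$ is precisely integration against the constant function $\mathbf 1$. Whether this functional is weak*-continuous — equivalently, whether its kernel $M_0(G)$ is weak*-closed — should come down to whether $\mathbf 1 \in C_0(G)$, i.e. whether $G$ is compact.

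For the ``if'' direction I would argue directly: when $G$ is compact, $\mathbf 1 \in C_0(G) = C(G)$, and $M_0(G) = (\C \mathbf 1)^\perp$ because $\mu(G) = \langle \mu, \mathbf 1 \rangle$ for all $\mu \in M(G)$. An annihilator of a subset of the predual is always weak*-closed, so $M_0(G)$ is weak*-closed.

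For the ``only if'' direction, suppose $M_0(G)$ is weak*-closed. By the first identity in \eqref{1b} (applied with $F = M_0(G)$), we have $M_0(G) = (M_0(G)_\perp)^\perp$. The key computation is to identify the pre-annihilator: since $\delta_x - \delta_e \in M_0(G)$ for every $x \in G$, any $f \in M_0(G)_\perp \subseteq C_0(G)$ must satisfy $f(x) - f(e) = \langle \delta_x - \delta_e, f \rangle = 0$, so $f$ is constant. If $G$ is not compact, the only constant function in $C_0(G)$ is $0$; hence $M_0(G)_\perp = \{0\}$ and therefore $M_0(G) = \{0\}^\perp = M(G)$. This contradicts the fact that $M_0(G)$ has codimension one in $M(G)$ (it is the kernel of the nonzero character $\mu \mapsto \mu(G)$), so $G$ must be compact.

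There is no serious obstacle here; the one point to get right is the direction in which \eqref{1b} is invoked, together with the elementary computation of $M_0(G)_\perp$. As an alternative to the last paragraph, one can instead use that a weak*-closed subspace of codimension one must be the kernel of a weak*-continuous functional, hence of integration against some $f_0 \in C_0(G)$; comparing this kernel with that of the augmentation character forces $f_0$ to be a nonzero constant function, again forcing $G$ to be compact.
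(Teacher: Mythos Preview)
Your proof is correct, and in fact cleaner than the paper's argument for the ``only if'' direction. The ``if'' direction is identical to the paper's. For the converse, the paper invokes Lemma~\ref{3.4} to deduce that $E = M_0(G)_\perp$ is a one-dimensional, left-translation-invariant subspace of $C_0(G)$, and then argues by picking a spanning function $f$ of norm $1$, using that $\delta_y * f = \lambda f$ for some scalar $\lambda$, and choosing specific group elements to produce a numerical contradiction. You bypass all of this by testing directly against the measures $\delta_x - \delta_e \in M_0(G)$, which immediately forces any $f \in M_0(G)_\perp$ to be constant; non-compactness then kills the constants in $C_0(G)$. Your route is more elementary, avoids Lemma~\ref{3.4} entirely, and makes transparent that the whole lemma is really the statement ``$\mathbf 1 \in C_0(G)$ iff $G$ is compact'' dressed up in duality. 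The paper's approach, by contrast, illustrates the translation-invariance machinery of Lemma~\ref{3.4} in action, which is perhaps why the author chose it.
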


\begin{proof}
If $G$ is compact, then $M_0(G) = \lbrace \text{constant functions} \rbrace ^\perp$, which is weak*-closed. 

Assume towards a contradiction that $M_0(G)$ is weak*-closed, but that $G$ is not compact. By Lemma \ref{3.4}, $E= M_0(G)_\perp$ is invariant under left translation, and using the formula $E' \cong M(G)/E^\perp = M(G) / M_0(G)$ we see that $E$ has dimension one. So there exists $f\in C_0(G)$ of norm 1 such that $E =  {\rm span} f$. There exists $x_0 \in G$ such that $\vert f(x_0) \vert = 1.$ Let $K$ be a compact subset of $G$ such that $\vert f(x) \vert < 1/2$ for all $x \in G \setminus K$. Then $K x_0^{-1} \cup x_0 K^{-1}$ is still compact, so we may choose $y \in G$ not belonging to this set, so that in particular $y x_0, y^{-1} x_0 \notin K$. Then there exists $\lambda \in \C \setminus \lbrace 0 \rbrace$ such that $\delta_y*f = \lambda f$. Hence
$$\vert f(yx_0) \vert = \vert \lambda \vert \vert f(x_0) \vert = \vert \lambda \vert < 1/2,$$
whereas
$$1= \vert f(x_0) \vert = \vert f(yy^{-1}x_0) \vert = \vert \lambda \vert \vert f(y^{-1} x_0) \vert <1/2 \cdot 1/2 = 1/4.$$
This contradiction completes the proof.
\end{proof}

The next theorem characterizes when $M_0(G)$ is finitely-generated. In particular Question \ref{1.3} has a negative answer for the measure algebra.

\begin{theorem} \label{3.5} 
Let $G$ be a locally compact group. Then $M_0(G)$ is finitely-generated as a left ideal if and only if $G$ is compact.
\end{theorem}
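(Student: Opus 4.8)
The plan is to prove the two implications separately. The ``if'' direction is a one-line computation with the normalised Haar measure; the ``only if'' direction is the substantive one, and the strategy there is to show that a finitely-generated $M_0(G)$ is necessarily weak*-closed, so that Lemma \ref{0.2} applies.

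\emph{Sufficiency.} Assume $G$ is compact and let $m$ denote normalised Haar measure, so $m(G) = 1$; by left-invariance of $m$ one has $\mu * m = \mu(G)\,m$ for every $\mu \in M(G)$. Since $M(G)$ is unital we have $M(G)^\sharp = M(G)$. Now $\delta_e - m \in M_0(G)$, and for $\mu \in M_0(G)$ we get $\mu * (\delta_e - m) = \mu - \mu(G)\,m = \mu$; hence $M_0(G) = M(G)(\delta_e - m)$ is generated as a left ideal by the single element $\delta_e - m$.

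\emph{Necessity.} Suppose $M_0(G) = M(G)\mu_1 + \cdots + M(G)\mu_n$ with $\mu_1,\dots,\mu_n \in M_0(G)$. For a fixed $\mu \in M(G)$ define $Q_\mu \colon C_0(G) \to C_0(G)$ by $(Q_\mu f)(x) = \int_G f(xy) \dd\mu(y)$; a direct computation via Fubini gives $\langle \nu * \mu, f \rangle = \langle \nu, Q_\mu f \rangle$ for $\nu \in M(G) = C_0(G)'$ and $f \in C_0(G)$, so that right convolution $R_\mu \colon \nu \mapsto \nu*\mu$ equals $(Q_\mu)'$ and is therefore weak*-continuous. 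Consequently $T \colon M(G)^n \to M(G)$, $T(\nu_1,\dots,\nu_n) = \sum_{i=1}^n \nu_i * \mu_i$, is weak*-continuous, hence $T = S'$, where $S \colon C_0(G) \to \bigl(\bigoplus_{\ell^1}^n C_0(G)\bigr)$, $Sf = (Q_{\mu_1}f,\dots,Q_{\mu_n}f)$, and we have identified $M(G)^n$ with the dual of the $\ell^1$-direct sum of $n$ copies of $C_0(G)$ (the product weak* topology being the weak* topology of this dual). The range of $T = S'$ is exactly $M(G)\mu_1 + \cdots + M(G)\mu_n = M_0(G)$, which is norm-closed, being $\ker$ of the augmentation character. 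By the Closed Range Theorem the range of $S'$ is norm-closed precisely when it is weak*-closed; thus $M_0(G)$ is weak*-closed, and Lemma \ref{0.2} gives that $G$ is compact.

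\emph{Main obstacle.} The only step needing real care is verifying that $Q_\mu$ genuinely maps $C_0(G)$ into $C_0(G)$, since this is what makes $R_\mu$ weak*-continuous and drives the whole argument. Continuity of $Q_\mu f$ follows from the sup-norm continuity of translation on $C_0(G)$; for vanishing at infinity I would first treat compactly supported $\mu$, where $x\cdot\supp\mu$ leaves every compact subset of $G$ as $x \to \infty$, and then pass to general $\mu$ using the estimate $\|Q_\mu f - Q_{\mu'} f\|_\infty \leq \|f\|_\infty \|\mu - \mu'\|$ together with norm-density of the compactly supported measures in $M(G)$. Beyond that, everything is standard bookkeeping: the identification of $M(G)^n$ as a dual space, the characterisation of weak*-continuous operators as adjoints, and the equivalence from the Closed Range Theorem invoked above.
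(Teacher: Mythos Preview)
Your proof is correct and follows essentially the same route as the paper: the ``if'' direction via $\delta_e - m$ being an identity for $M_0(G)$, and the ``only if'' direction by showing the convolution map $(\nu_1,\dots,\nu_n)\mapsto \sum_i \nu_i*\mu_i$ is a dual operator with norm-closed range, hence weak*-closed range by the closed range theorem, and then invoking Lemma~\ref{0.2}. The only difference is expository: the paper simply cites that multiplication in $M(G)$ is separately weak*-continuous, whereas you explicitly construct the predual operators $Q_{\mu_i}$ and verify that they map $C_0(G)$ into itself --- your treatment of the ``main obstacle'' is sound and more detailed than what the paper provides.
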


\begin{proof}
If $G$ is compact, and $m$ denotes the normalised Haar measure on $G$, then $\delta_e - m \in M_0(G)$ and it is easily seen, by direct computation, that $\delta_e - m$ is an identity for $M_0(G)$, so that in particular $M(G)$ is finitely-generated. 

Assume that $M_0(G)$ is finitely-generated, say 
$$M_0(G) = M(G) *\mu_1+ \cdots + M(G) * \mu_n$$
for some $ n \in \N$, and $\mu_1,\ldots,\mu_n \in M_0(G)$. Define a linear map 
$$S: M(G)^n \rightarrow M(G)$$ 
by 
$$S: (\nu_1,\ldots,\nu_n) \mapsto \nu_1*\mu_1+ \cdots +\nu_n*\mu_n.$$
As multiplication in $M(G)$ is separately weak*-continuous, $S$ is a weak*-continuous linear map, and hence $S=T^*$ for some bounded linear map $T: C_0(G) \rightarrow (C_0(G))^n$. We know that ${\rm im \,}S = M_0(G)$ is closed, implying that ${\rm im \, }T$ is closed and so we see that ${\rm im \,} S$ is weak*-closed. Hence, by Lemma \ref{0.2}, $G$ is compact.
\end{proof}

We do not know of a weighted version of this theorem, but when $G$ is discrete $M(G, \omega) = \lo(G, \omega)$, and this case will be the focus of \S 6, where it seems a very different approach is required as weak*-closure of the augmentation ideal no longer characterises finiteness of the underlying discrete group, and in particular it can happen that $\lo_{\,0}(G, \omega)$ is weak*-closed, but not finitely-generated.

Note that we have now proven  Corollary \ref{1.4}:

\begin{proof}[Proof of Corollary \ref{1.4}]
By Theorem \ref{3.3}, it is enough to consider the discrete case, which follows from Theorem \ref{3.5}.
\end{proof}

We now prove the  Dales--\.Zelazko conjecture for weighted measure algebras on non-discrete groups. We have been unable to fully resolve the discrete version, but again this is addressed in \S 6.

\begin{lemma} \label{3.5a} 
Let $G$ be a discrete group, and $\omega$ a weight on $G$. Suppose that $\lo_{\,0}(G, \omega)$ is finitely-generated as a left ideal. Then $G$ is finitely-generated.
\end{lemma}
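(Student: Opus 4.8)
The plan is to reduce first to the case of \emph{finitely supported} generators, and then to show that the finite set of group elements occurring in those supports already generates $G$.

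For the reduction, observe that $\lo_{\,0}(G,\omega)$ is the kernel of the augmentation character, which is bounded because $|\sum_u f(u)| \le \Vert f\Vert_\omega$; hence $\lo_{\,0}(G,\omega)$ is a closed left ideal of codimension one. Applying Lemma \ref{2.2} with $X = \lo(G,\omega)$, dense subspace $\C G$ (the finitely supported functions), and $Y = \lo_{\,0}(G,\omega)$, we obtain that $\C_0 G = \C G \cap \lo_{\,0}(G,\omega)$ --- the finitely supported functions of augmentation zero --- is dense in $\lo_{\,0}(G,\omega)$. Since $\lo_{\,0}(G,\omega)$ is closed and, by hypothesis, finitely generated as a left ideal, Lemma \ref{2.1} lets us choose generators inside $\C_0 G$: there are finitely supported functions $h_1,\ldots,h_m$, each of augmentation zero, with $\lo_{\,0}(G,\omega) = \lo(G,\omega)^\sharp h_1 + \cdots + \lo(G,\omega)^\sharp h_m$ (and $\lo(G,\omega)$ is unital, with identity $\delta_e$, so $\lo(G,\omega)^\sharp h_j = \lo(G,\omega) h_j$).

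Now set $E = \bigcup_{j=1}^m \supp h_j$, a finite subset of $G$, and $H = \langle E \rangle \le G$, so that $H$ is finitely generated; I will show $H = G$, which proves the lemma. For each coset $c \in G/H$ define the bounded linear functional $\phi_c \colon \lo(G,\omega) \to \C$ by $\phi_c(f) = \sum_{x \in c} f(x)$, and write $\phi_H$ for the one attached to the coset $H$ itself. The crucial computation is that, for $f \in \lo(G,\omega)$ with $\supp f \subseteq H$ and any $b \in \lo(G,\omega)$,
\[
\phi_c(b * f) = \phi_c(b)\,\phi_H(f) \qquad (c \in G/H),
\]
which follows by expanding the convolution: the inner sum $\sum_{u \in y^{-1}c} f(u)$ vanishes unless the left coset $y^{-1}c$ of $H$ equals $H$, i.e. unless $y \in c$, in which case it equals $\phi_H(f)$. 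Since $\phi_H(h_j) = \sum_u h_j(u) = 0$ for each $j$, this identity gives $\phi_c(a h_j) = \phi_c(a)\phi_H(h_j) = 0$ for every $a \in \lo(G,\omega)$ and every $c$; hence every element of $\lo_{\,0}(G,\omega) = \sum_j \lo(G,\omega) h_j$ lies in $\bigcap_{c \in G/H} \ker \phi_c$.

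To conclude, suppose for contradiction that $H \ne G$, and pick $g \in G \setminus H$. Then $\delta_g - \delta_e \in \lo_{\,0}(G,\omega)$, yet $\phi_{gH}(\delta_g - \delta_e) = 1$ because $g \in gH$ while $e \notin gH$; this contradicts the previous paragraph. Therefore $H = G$, so $G$ is finitely generated. The only genuine subtlety is the first step: the supports of arbitrary $\lo$-generators are merely countable, so without passing to finitely supported generators one would recover only that $G$ is countably generated --- the rest is routine bookkeeping with cosets.
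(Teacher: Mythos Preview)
Your proof is correct and follows essentially the same approach as the paper's: reduce to finitely supported generators via Lemmas \ref{2.1} and \ref{2.2}, let $H$ be the subgroup generated by their supports, and show that the coset-sum functional annihilates $\lo_{\,0}(G,\omega)$, forcing $H=G$. The only cosmetic differences are that the paper works with the single functional $\phi_H$ (rather than all $\phi_c$) and leaves the final ``this clearly forces $H=G$'' step implicit, whereas you spell it out with $\delta_g - \delta_e$.
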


\begin{proof}
Suppose $\lo_{\,0}(G, \omega)$ is generated by $h_1, \ldots, h_n \in \lo_{\,0}(G, \omega)$. By Lemma \ref{2.1} we may assume each $h_i$ is finitely-supported. Let $H$ be the subgroup of $G$ generated by $\bigcup \limits_{i=1}^n \supp h_i$. We show that $H = G$. Let $g \in \lo(G, \omega)$. Then, for each $i \in \lbrace 1, \ldots ,n \rbrace$,
$$\sum_{u \in H} (g*h_i)(u) = \sum_{u \in H} \sum_{st = u} g(s)h_i(t) = \sum_{s \in G} g(s) \sum_{t \in s^{-1}H} h_i(t) = 0,$$
where the final equality holds because either $s \notin H$, in which case $s^{-1}H$ is disjoint from $\supp h_i$, or else $s^{-1}H = H \supset \supp h_i$, in which case $h_i \in \lo_{\, 0}(G, \omega)$ implies that $\sum_{t \in H} h_i(t) = 0$. Since the functions $h_i$ generate $\lo_{\, 0}(G, \omega)$ it follows that $\sum_{u \in H} f(u) = 0$ for every $ f \in \lo_{\, 0}(G, \omega)$. This clearly forces $H = G$, as claimed.
\end{proof}

\begin{theorem} \label{3.6} 
The Dales--\.Zelazko conjecture holds for the algebra $M(G, \omega)$, whenever $G$ is a non-discrete locally compact group, and $\omega$ is a weight on $G$.
\end{theorem}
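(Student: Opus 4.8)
The plan is to reduce the Dales--\.Zelazko conjecture for $M(G,\omega)$ with $G$ non-discrete to a situation already handled, by exploiting the fact that $L^1(G,\omega)$ sits inside $M(G,\omega)$ as a dense two-sided ideal whenever $G$ is non-discrete. Recall that the conjecture asks: if every maximal left ideal of the unital Banach algebra $M(G,\omega)$ is finitely-generated, then $M(G,\omega)$ is finite-dimensional. Since $G$ non-discrete forces $M(G,\omega)$ to be infinite-dimensional, it suffices to produce a single maximal left ideal that is \emph{not} finitely-generated, under the assumption that $G$ is non-discrete.

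First I would observe that $L^1(G,\omega)$ is a closed, proper two-sided ideal in $M(G,\omega)$ when $G$ is non-discrete (properness because $\delta_e \notin L^1(G,\omega)$, as $G$ is not discrete so points have Haar measure zero). Next, I would use this to locate a maximal \emph{left} ideal containing $L^1(G,\omega)$: since $L^1(G,\omega)$ is a proper left ideal in the unital algebra $M(G,\omega)$, it is contained in some maximal left ideal $\mathcal M$. The key point is that $\mathcal M$ cannot be finitely-generated. Suppose for contradiction that $\mathcal M = M(G,\omega)^\sharp x_1 + \cdots + M(G,\omega)^\sharp x_n$ with $x_1,\dots,x_n \in \mathcal M$; since $M(G,\omega)$ is already unital we may take the unitisation to be $M(G,\omega)$ itself. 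The strategy is then to show that such a maximal left ideal must in fact be closed, so that Corollary~\ref{2.4} (or a direct argument in its spirit) applies.

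The mechanism I would use to get closedness is the following: $L^1(G,\omega)$ is a dense proper left ideal in $M(G,\omega)$ (dense in operator-norm? — no, it is \emph{not} dense in norm; rather $C_c(G)$ generates, via convolution, a dense subalgebra of $L^1(G,\omega)$ but $L^1(G,\omega)$ is norm-closed in $M(G,\omega)$). Here the honest route is different from the $L^1$ case: instead I would argue that $M(G,\omega)/L^1(G,\omega)$, with the quotient norm, is a unital Banach algebra on which the maximal left ideals correspond to those of $M(G,\omega)$ containing $L^1(G,\omega)$; but this quotient need not be finite-dimensional, so this alone does not close the argument. The cleaner approach, which I expect the authors take, is to use Lemma~\ref{3.4}: a weak*-closed left ideal of $M(G,\omega)$ corresponds to a left-translation-invariant norm-closed subspace of $C_0(G,1/\omega)$. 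One shows that if $\mathcal M$ were finitely-generated it would be weak*-closed (the image of the weak*-continuous map $(\nu_1,\dots,\nu_n)\mapsto \sum \nu_i * x_i$ together with a rank-one correction for the unitisation has closed range, hence weak*-closed range, by the argument of Theorem~\ref{3.5}), and then a codimension-one translation-invariant subspace of $C_0(G,1/\omega)$ would yield, as in Lemma~\ref{0.2}, a contradiction with $G$ being infinite — noting that non-discrete implies infinite.

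The main obstacle, and the step requiring the most care, is showing that a finitely-generated maximal left ideal in $M(G,\omega)$ has weak*-closed, equivalently norm-closed, generating span. In the unweighted measure-algebra case this was exactly the content of Theorem~\ref{3.5}, using separate weak*-continuity of multiplication and the open mapping theorem. For the weighted case one must check that multiplication in $M(G,\omega)$ is still separately weak*-continuous with respect to the predual $C_0(G,1/\omega)$ — this follows because convolution by a fixed measure is the adjoint of a translation-type operator on $C_0(G,1/\omega)$, using submultiplicativity of $\omega$ to control norms — and then the argument of Theorem~\ref{3.5} transfers: closed range of $S$ forces weak*-closed range, and Lemma~\ref{0.2}'s analogue rules out the relevant codimension-one invariant subspace. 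I would also need to handle the unitisation bookkeeping (the generators $x_i$ lie in $\mathcal M$, and $M(G,\omega)^\sharp = M(G,\omega)$), but that is routine. Finally I would remark that, as in the proof of Corollary~\ref{1.4}, the discrete case is genuinely different and is deferred to \S6, which is consistent with the statement restricting to non-discrete $G$.
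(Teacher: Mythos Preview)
Your approach diverges from the paper's and, as written, has a genuine gap at the final step. You take a maximal left ideal $\mathcal{M}$ containing $L^1(G,\omega)$ and, assuming it is finitely-generated, argue (correctly, modulo checking separate weak*-continuity of convolution in the weighted case) that it must be weak*-closed. But you then invoke ``a codimension-one translation-invariant subspace of $C_0(G,1/\omega)$'' and a Lemma~\ref{0.2}-type argument. This does not follow: a maximal left ideal need not have codimension one, so its preannihilator $\mathcal{M}_\perp$ need not be one-dimensional, and the eigenvector argument of Lemma~\ref{0.2} simply does not apply. (Also, that argument contradicts non-compactness, not mere infiniteness; your closing remark ``non-discrete implies infinite'' is beside the point.) Interestingly, your route \emph{can} be rescued by a different observation: $L^1(G,\omega)$ is weak*-dense in $M(G,\omega)$ (since $C_c(G)\subset L^1(G,\omega)$ already separates points of $C_0(G,1/\omega)$), so a proper weak*-closed left ideal cannot contain it---giving the contradiction directly, with no need for Lemma~\ref{0.2}.

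The paper takes an entirely different and shorter route, avoiding weak*-topology altogether. It uses that the continuous measures in $M(G,\omega)$ form a closed two-sided ideal with quotient $\lo(G,\omega)$ (here $G$ is viewed as a discrete group). Since $G$ is non-discrete it is uncountable, hence not finitely-generated as a group; by Lemma~\ref{3.5a} the augmentation ideal $\lo_{\,0}(G,\omega)$ is therefore not finitely-generated. Its preimage under the quotient map is then a codimension-one (hence maximal) ideal of $M(G,\omega)$ that is not finitely-generated. This sidesteps the issues you encountered: the ideal produced is explicitly of codimension one, and no weighted analogue of Theorem~\ref{3.5} or Lemma~\ref{0.2} is needed---which matters, since the paper explicitly notes after Theorem~\ref{3.5} that it does not know a weighted version of that result.
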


\begin{proof}
It is a straightforward consequence of \cite[Theorem 3.3.36(v)]{D} that $\lo(G, \omega)$ is the quotient of $M(G, \omega)$ by the closed ideal consisting of the continuous measures belonging to $M(G, \omega)$. As $G$ is non-discrete, it is uncountable, and hence, by Lemma \ref{3.5a}, $\lo_{\,0}(G, \omega)$ is not finitely-generated as  a left ideal. Taking the preimage of this ideal under the quotient map gives a codimension 1 ideal of $M(G, \omega)$, and this ideal is not finitely-generated as a left ideal.
\end{proof}

\section{The Case of a Discrete Monoid}

\noindent
We begin this section with some definitions, which generalize ideas such as word-length in group theory to the context of an arbitrary monoid. By a \textit{monoid} we mean a semigroup possessing an identity element $e$. Let $M$ be a monoid, and let $E$ be a subset of $M$. Then for $x \in M$ we define
$$E \cdot x = \lbrace ux : u \in E \rbrace, \quad
x \cdot E = \lbrace xu: u \in E \rbrace,$$
and
$$ E \cdot x^{-1} = \lbrace u \in M : ux \in E \rbrace, \quad 
x^{-1} \cdot E = \lbrace u \in M : xu \in E \rbrace.$$
We abbreviate $\lbrace u \rbrace \cdot x^{-1}$ to $u \cdot x^{-1}$, and similarly $u \cdot x$ represents the set $\lbrace u \rbrace \cdot x$. The important thing to note in these definitions is that there may not be an element $x^{-1}$, and that $u \cdot x^{-1}$ represents not an element but a set, which in general may be infinite or empty. Also, be aware that `$\cdot$' is not necessarily associative: $(x \cdot y^{-1}) \cdot z^{-1}$ is meaningful whereas $x \cdot (y^{-1}
 \cdot z^{-1})$ is not.

Now let $X \subset M$, and fix $u \in M$. We say that a finite sequence $(z_i)_{i=1}^n$ in $M$ is an \textit{ancestry for $u$ with respect to $X$} if $z_1 = u, z_n = e$, and, for each $i \in \N$ with  $1< i \leq n$, there exists $x \in X$ such that either $z_i x = z_{i-1}$ or $z_i = z_{i-1} x$.  

Denote by $H_X$ the set of elements of $M$ which have an ancestry with respect to $X$. Then
$$H_X = \lbrace e \rbrace \cup \left( \bigcup_{n \in \N, x_1,\ldots,x_n \in X} \: \bigcup_{(\eps_1,\ldots,\eps_n) \in \lbrace \pm1 \rbrace^n} \left(\ldots\left( \left(e \cdot x_1^{\eps_1} \right)\cdot x_2^{\eps_2}\right) \cdot \ldots \cdot x_n^{\eps_n} \right) \right). $$
We say that the monoid $M$ is \textit{pseudo-generated} by $X$ if $M= H_X$; this is the same notion as what is termed being \textit{right unitarily generated by $X$} in \cite{Kobayashi2007}.
Observe that when $M$ is not just a monoid but a group, $M$ is pseudo-generated by $X$ if and only if it is generated by $X$. We say that $M$ is \textit{finitely pseudo-generated} if $M$ is pseudo-generated by some finite set $X$. 

Given a subset $X$ of $M$ we set $B_0 = \lbrace e \rbrace$ and for each $n \in \N$ we set
$$B_n = \lbrace e \rbrace \cup \left( \bigcup_{x_1,\ldots,x_k \in X, \: k \leq n} \: \bigcup_{(\eps_1,\ldots,\eps_k) \in \lbrace \pm 1 \rbrace^k} \left(\ldots\left( \left(e \cdot x_1^{\eps_1} \right) \cdot x_2^{\eps_2}\right) \cdot \ldots \cdot x_k^{\eps_k} \right)\right) $$
and 
\begin{equation} \label{S} 
S_n = B_n \setminus B_{n-1}.
\end{equation}
The set $B_n$ consists of those $u$ in $M$ which have an ancestry of length at most $n$ with respect to $X$. Of course the  sets $B_n$ and $S_n$ depend on  $X$, but we suppress this in the notation as $X$ is usually clear from the context. Finally, we say that $M$ is \textit{pseudo-finite} if there is some $n \in \N$ and a finite subset $X$ of $M$ such that every element of $M$ has an ancestry with respect to $X$ of length at most $n$, or equivalently if
$$M = \bigcup_{k=0}^n B_k .$$
Again, for a group $M$, $M$ pseudo-finite if and only if it is finite. 

To see an example of a monoid which is pseudo-finite, but not finite, take any infinite monoid $M$ and add a zero $\theta$ to obtain $M^0 = M \cup \lbrace \theta \rbrace $. Then 
$$M^0 = \theta \cdot \theta^{-1},$$
so that $M^0$ is pseudo-finite. Incidentally, this also furnishes us with an example where associativity of `$\cdot$' fails, even though all expressions involved are meaningful: we have $(\theta \cdot \theta^{-1}) \cdot e = M^0$, whereas $\theta \cdot (\theta^{-1} \cdot e) = \emptyset$.

In the next two lemmas we establish a version of Lemma \ref{3.5a} for monoids.

\begin{lemma} \label{4.2}
Let $M$ be a monoid, and let $X \subset M$. Then we have 
$${H_X \cdot u}, {H_X \cdot u^{-1}} \subset H_X \quad (u \in H_X).$$ 
\end{lemma}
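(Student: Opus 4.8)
The plan is to unwind the definition of $H_X$ as the set of elements possessing an ancestry with respect to $X$, and to show directly that concatenating ancestries (with an appropriate adjustment at the join) produces new ancestries. Fix $u \in H_X$, and let $w \in H_X$; we must show $wu \in H_X$ (this is the statement $H_X \cdot u \subset H_X$ applied with the roles arranged so that the multiplier on the right is the element $u$ — note the lemma as stated uses $u$ for the fixed element of $H_X$ and lets the second factor range; I will write the argument so both inclusions come out). Let $(z_i)_{i=1}^n$ be an ancestry for $w$, so $z_1 = w$, $z_n = e$, and each consecutive pair differs by a right multiplication or a "right division" by a generator. Similarly let $(y_j)_{j=1}^m$ be an ancestry for $u$.

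First I would treat $H_X \cdot u^{-1}$, i.e.\ the claim that if $v \in H_X$ and $vu' = $ some element, hmm — more precisely I would handle both set-operations by a single observation: \emph{if $v \in H_X$ and there exists $x \in X$ with either $vx \in H_X$-candidate or $v = (\text{candidate})x$, one step of an ancestry can be prepended}. Concretely, to show $H_X \cdot u \subset H_X$ for $u \in H_X$: take the ancestry $(y_j)_{j=1}^m$ of $u$ and observe that, reading it in reverse, $e = y_m \leadsto y_{m-1} \leadsto \cdots \leadsto y_1 = u$ is a sequence building $u$ from $e$ by the operations $t \mapsto tx$ and $t \mapsto t\cdot x^{-1}$ (an element of the set $t \cdot x^{-1}$). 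Now start instead from $w \in H_X$ (not from $e$) and apply the very same sequence of operations: since each operation $t \mapsto tx$ and each choice from $t \cdot x^{-1}$ only ever multiplies on the right by generators or inverts such a multiplication, the resulting chain starting at $w$ produces an element of $w \cdot (\text{those operations})$, and in particular one reaches an element of $\{wu\}$ when the operation "$\cdot x^{-1}$" is a genuine inverse, but in the monoid case $w \cdot x^{-1}$ is a set. The clean way: induct on the length $m$ of the ancestry of $u$. If $m = 1$ then $u = e$ and $H_X \cdot e = H_X$. For the inductive step, $u \in S_m$-type element means $u \in B_{m-1}\cdot x^{\pm 1}$ for some $x \in X$, so either $u = u'x$ with $u'$ having an ancestry of length $< m$, or $u' = ux$, equivalently $u \in u' \cdot x^{-1}$, with $u'$ of shorter ancestry. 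In the first case $wu = (wu')x$ and $wu' \in H_X$ by the inductive hypothesis, so prepending the step "multiply by $x$" to an ancestry of $wu'$ gives one for $wu$. In the second case $ux = u'$, so $w u \in (wu')\cdot x^{-1}$ with $wu' \in H_X$ by induction, and again one step extends the ancestry. Hence $wu \in H_X$, proving $H_X \cdot u \subset H_X$. The inclusion $H_X \cdot u^{-1} \subset H_X$ follows from the same induction: an element $v \in H_X \cdot u^{-1}$ satisfies $vu \in H_X$, and one argues by induction on the ancestry length of $u$ that $v$ itself must lie in $H_X$ — peeling off the last generator of $u$ reduces to a shorter case, using that $vu = v'$ with shorter-ancestry $u' = u x^{-1}$-type relations translate between $v$ and a nearby element of $H_X$.

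The one genuine subtlety — and the step I expect to be the main obstacle — is bookkeeping the non-associativity of the "$\cdot$" operation and the fact that $w \cdot x^{-1}$ is a \emph{set}: when I say "prepend a step to the ancestry", I must verify that the ancestry condition "there exists $x \in X$ such that $z_i x = z_{i-1}$ or $z_i = z_{i-1} x$" is exactly preserved, rather than some weaker set-membership statement. This is handled by keeping the argument at the level of individual elements and ancestries (finite sequences), never at the level of the set-valued expressions $B_n$, so that each "$\leadsto$" is the honest relation "$z_i x = z_{i-1}$ or $z_i = z_{i-1}x$". With that discipline the induction goes through mechanically, and the length of the produced ancestry for $wu$ is at most $(\text{length for }w) + (\text{length for }u) - 1$, a bound that will be useful later for the pseudo-finite case.
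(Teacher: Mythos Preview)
Your argument is correct and is essentially the paper's own proof: both induct on a complexity measure for $u$ (you use ancestry length, the paper uses the equivalent filtration $H_k = \bigcup_{x\in X}(H_{k-1}\cdot x \cup H_{k-1}\cdot x^{-1})$), splitting into the cases $u = u'x$ and $ux = u'$ with $u'$ of lower complexity, and reducing to the base case $u\in X$. Your write-up is rambling and the handling of $H_X\cdot u^{-1}$ is only sketched, but once cleaned up the two proofs match line for line.
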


\begin{proof}
To see this, we define $H_0 = \lbrace e \rbrace \cup X$, and subsequently
$$H_k = \left(\bigcup \limits_{x \in X} H_{k-1} \cdot x \right) \cup \left( \bigcup \limits_{x \in X} H_{k-1} \cdot x^{-1} \right)$$
for $k \in \N$. It is easily seen that 
$$H_X = \bigcup \limits_{k=0}^\infty H_k.$$

We establish the lemma by induction on $k$ such that $u \in H_k$. The case $k=0$ follows just from the definition of $H_X$. So suppose that $k >0$. Then either $u = zx$ or $ux = z$ for some $z \in H_{k-1}$ and $x \in X$. Consider the first case, and let $h \in H_X$. Then $hu = hzx$. By the induction hypothesis $hz \in H_X$, and hence $hu = hzx \in H_X$ by the case $k=0$. Similarly, if $y \in M$ is such that $yu = yzx \in H_X$, then $yz \in H_X$, and so $y \in H_X$ by the induction hypothesis applied to $z$.

Similar considerations apply in the case where $u$ has the property that $ux = z$ for some $z \in H_{k-1}$ and some $x \in X$, and we see that in either case $H_X \cdot u$, $H_X \cdot u^{-1} \subset H_X$, completing the induction.
\end{proof}

\begin{lemma} \label{4.3} 
Let $M$ be a monoid, let $\omega$ be a weight on $M$, and suppose that $\lo_{\,0}(M, \omega)$ is finitely-generated as a left ideal in $\lo(M, \omega)$. Then $M$ is finitely pseudo-generated.
\end{lemma}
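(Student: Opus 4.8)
The plan is to mimic the proof of Lemma~\ref{3.5a}, with the set $H_X$ playing the role that the subgroup generated by the supports played there. First I would reduce to finitely-supported generators: since $\C M$ is dense in $\lo(M,\omega)$ and $\lo_{\,0}(M,\omega)$ is a closed subspace of codimension one (being the kernel of the augmentation character, which has norm at most $1$ as $\omega \ge 1$), Lemma~\ref{2.2} shows that $\C M \cap \lo_{\,0}(M,\omega)$ is dense in $\lo_{\,0}(M,\omega)$, and then Lemma~\ref{2.1} lets me choose generators $h_1, \dots, h_n \in \lo_{\,0}(M,\omega)$ that are each finitely supported. Set $X = \bigcup_{i=1}^n \supp h_i$, a finite subset of $M$. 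The goal is then to prove that $M = H_X$, for this says precisely that $M$ is pseudo-generated by the finite set $X$.

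The crux is the identity $\sum_{u \in H_X}(g * h_i)(u) = 0$ for every $g \in \lo(M,\omega)$ and every $i$. Because $\omega \ge 1$, the product $g*h_i$ lies in $\lo(M)$ and $\sum_{s,t}\lv g(s)\rv\,\lv h_i(t)\rv = \lV g\rV_1 \lV h_i\rV_1 < \infty$, so the relevant double series converges absolutely and may be rearranged as $\sum_{s \in M} g(s)\bigl(\sum_{t \,:\, st \in H_X} h_i(t)\bigr)$. For fixed $s$ the inner sum only involves $t \in X$, and I would split into two cases using Lemma~\ref{4.2}. If $s \in H_X$, then for each $x \in X \subseteq H_X$ we have $sx \in H_X$ by Lemma~\ref{4.2} (which gives $H_X \cdot x \subset H_X$), so the inner sum equals $\sum_{t \in X} h_i(t) = \sum_{t \in M} h_i(t) = 0$ since $h_i \in \lo_{\,0}(M,\omega)$. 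If $s \notin H_X$, then no $t \in X$ can satisfy $st \in H_X$: such a $t$ lies in $H_X$, and Lemma~\ref{4.2} gives $H_X \cdot t^{-1} \subset H_X$, so $st \in H_X$ would force $s \in H_X \cdot t^{-1} \subset H_X$, a contradiction; hence the inner sum is empty and equals $0$. Either way the inner sum vanishes, establishing the identity.

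To finish: since $M$ is a monoid, $\lo(M,\omega)$ is unital and $\lo(M,\omega)^\sharp = \lo(M,\omega)$, so every $f \in \lo_{\,0}(M,\omega)$ can be written as $f = \sum_{i=1}^n g_i * h_i$ with $g_i \in \lo(M,\omega)$; summing the identity above over $i$ yields $\sum_{u \in H_X} f(u) = 0$ for all $f \in \lo_{\,0}(M,\omega)$. If $M \ne H_X$, pick $v \in M \setminus H_X$ and apply this to $f = \delta_v - \delta_e \in \lo_{\,0}(M,\omega)$: since $e \in H_X$ but $v \notin H_X$ the sum equals $-1$, a contradiction. Therefore $M = H_X$, so $M$ is finitely pseudo-generated. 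I expect the case analysis in the middle step to be the main obstacle: one must extract from Lemma~\ref{4.2} exactly the two closure properties $H_X \cdot x \subset H_X$ and $H_X \cdot t^{-1} \subset H_X$ for elements $x,t$ of $X$, and observe that the support condition $\supp h_i \subseteq X$ makes both the ``full'' inner sum (case $s \in H_X$) and the ``empty'' one (case $s \notin H_X$) collapse to $0$.
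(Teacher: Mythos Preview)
Your proposal is correct and follows essentially the same approach as the paper's proof: reduce to finitely-supported generators via Lemmas~\ref{2.1} and~\ref{2.2}, set $X$ to be the union of their supports, and show $\sum_{u\in H_X}(g*h_i)(u)=0$ by splitting on whether $s\in H_X$ and invoking the two closure properties $H_X\cdot u\subset H_X$ and $H_X\cdot u^{-1}\subset H_X$ from Lemma~\ref{4.2}. Your write-up is in fact slightly more explicit than the paper's in justifying the rearrangement and in spelling out the final contradiction via $f=\delta_v-\delta_e$.
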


\begin{proof}
Write $A = \lo(M, \omega)$. Since $\C_0M$ is dense in  $\lo_{\,0}(M, \omega)$, by Lemma \ref{2.1} we may suppose that 

\begin{equation} \label{gens} 
\lo_{\,0}(M, \omega) = A*h_1 + \cdots + A*h_n \
\end{equation}
for some $h_1,\ldots,h_n \in \C_0 M$. Set
 $$X = \bigcup \limits_{i =1}^{n} \supp h_i,$$
so that $X$ is a finite set. We shall complete the proof by showing that $X$ pseudo-generates $M$.

Write $H = H_X$. We observe that, for $s \in M$, if $s^{-1} \cdot H \cap H \neq \emptyset$, then $s \in H$. Indeed, suppose that $u \in s^{-1} \cdot H \cap H$. Then $su \in H$, and hence $s \in H \cdot u^{-1}$, which is a subset of $H$ by Lemma \ref{4.2}. 

Now let $g \in A$ be arbitrary. Then, for every $i \in \lbrace 1,\ldots, n \rbrace$, we have

\begin{align*}
\sum \limits_{u \in H} (g*h_i)(u) &= \sum \limits_{u \in H} \sum \limits_{st = u} g(s)h_i(t) 
 = \sum \limits_{u \in H} \sum \limits_{s \in M} \sum \limits_{t \in s^{-1} \cdot u} g(s) h_i(t) \\
&= \sum \limits_{s \in M} \left( g(s) \sum \limits_{t \in s^{-1} \cdot H} h_i(t) \right) 
= \sum \limits_{s \in H} \left( g(s) \sum \limits_{t \in s^{-1} \cdot H} h_i(t) \right),
\end{align*}
where the last equality holds because $s^{-1} \cdot H \cap \supp h_i \subset s^{-1} \cdot H \cap H = \emptyset$ whenever $s \notin H$. However, when $s \in H$, then, for every $x \in  \supp h_i$, we have $sx \in H$ by Lemma \ref{4.2}, which implies that $\supp h_i \subset s^{-1} \cdot H$. It follows that 
$$ \sum \limits_{t \in s^{-1} \cdot H} h_i(t) = 0$$
because $h_i \in \C_0M$. Hence
$$\sum \limits_{u \in H} (g*h_i)(u) = 0.$$
By (\ref{gens}), this implies that 
$$ \sum \limits_{u \in H} f(u) = 0$$
 for every  $ f \in \lo_{\,0}(M)$. But this clearly forces $M = H$, as required.
\end{proof}

Suppose that a monoid $M$ is pseudo-generated by a finite set $X$. 
Given $f \in \lo(M)$, we define a sequence of scalars  $(\sigma_n(f))$ by
$$\sigma_n(f) = \sum \limits_{u \in B_n} f(u).$$

\begin{lemma} \label{4.4} 
Let $M$ be a monoid and $X \subset M$. Let the sets $B_n$ in the definition of $\sigma_n$ refer to $X$. Then, for every $g \in \lo(M)$ and every $x \in  X$ we have
$$\sum_{n=1}^\infty \vert \sigma_n(g*(\delta_e - \delta_x)) \vert < \infty.$$
\end{lemma}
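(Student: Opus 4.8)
The plan is to write $\sigma_n\bigl(g*(\delta_e-\delta_x)\bigr)$ out term by term and to control it by the $\lo$-mass of $g$ on the two ``shells'' $S_n$ and $S_{n+1}$. Write $B_n\cdot x^{-1}=\lbrace s\in M:sx\in B_n\rbrace$. Since $(g*\delta_x)(u)=\sum_{s\in u\cdot x^{-1}}g(s)$, and since the order of summation may be interchanged (each $|g(s)|$ occurs exactly once on the left of $\sum_{u\in M}\sum_{s\in u\cdot x^{-1}}|g(s)|=\sum_{s\in M}|g(s)|<\infty$, namely in the term $u=sx$), one obtains
\begin{align*}
\sigma_n\bigl(g*(\delta_e-\delta_x)\bigr)&=\sum_{u\in B_n}g(u)-\sum_{u\in B_n\cdot x^{-1}}g(u)\\
&=\sum_{u\in B_n\setminus(B_n\cdot x^{-1})}g(u)-\sum_{u\in(B_n\cdot x^{-1})\setminus B_n}g(u),
\end{align*}
the contribution of $B_n\cap(B_n\cdot x^{-1})$ cancelling.

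The heart of the argument is then the chain of inclusions
$$B_{n-1}\ \subseteq\ B_n\cdot x^{-1}\ \subseteq\ B_{n+1}\qquad(x\in X,\ n\in\N),$$
both halves of which come straight from the defining formula for the sets $B_k$: adjoining one further operation $\cdot\,y^{\pm1}$ (with $y\in X$) to one of the iterated expressions producing an element of $B_k$ produces an element of $B_{k+1}$. Concretely, if $u\in B_{n-1}$ then $ux\in B_n$, so $u\in B_n\cdot x^{-1}$; and if $u\in B_n\cdot x^{-1}$, that is $ux\in B_n$, then applying the operation $\cdot\,x^{-1}$ once to an expression realising $ux$ in $B_n$ exhibits $u$ in $B_{n+1}$. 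Since $(B_k)_k$ is increasing and $S_k=B_k\setminus B_{k-1}$, the chain yields at once
$$B_n\setminus(B_n\cdot x^{-1})\subseteq B_n\setminus B_{n-1}=S_n,\qquad(B_n\cdot x^{-1})\setminus B_n\subseteq B_{n+1}\setminus B_n=S_{n+1}.$$

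Combining the two displays gives $\bigl|\sigma_n(g*(\delta_e-\delta_x))\bigr|\le\sum_{u\in S_n}|g(u)|+\sum_{u\in S_{n+1}}|g(u)|$ for every $n\ge1$. Summing over $n$ and using that the shells $S_n$ ($n\ge1$) are pairwise disjoint subsets of $M$, we conclude
$$\sum_{n=1}^\infty\bigl|\sigma_n(g*(\delta_e-\delta_x))\bigr|\ \le\ 2\sum_{n=1}^\infty\ \sum_{u\in S_n}|g(u)|\ \le\ 2\sum_{u\in M}|g(u)|\ <\ \infty.$$

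I expect the only delicate point to be the second step: because $\cdot\,x^{-1}$ denotes a (possibly empty) \emph{preimage set} and the operation $\cdot$ is not associative, the two inclusions forming the chain must be checked directly from the definition of $B_n$ rather than by manipulating formal expressions. Everything else is bookkeeping with absolutely convergent series; in particular no hypothesis that $X$ pseudo-generates $M$ is needed, since the $S_n$ are disjoint subsets of $M$ regardless.
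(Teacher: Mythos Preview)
Your proof is correct and follows essentially the same route as the paper: both arguments reduce $\sigma_n$ to a difference over $B_n$ and $B_n\cdot x^{-1}$ and then invoke the inclusions $B_{n-1}\subseteq B_n\cdot x^{-1}\subseteq B_{n+1}$ to bound $|\sigma_n|$ by the mass of $g$ on $S_n\cup S_{n+1}$. Your version is marginally cleaner, cancelling the full intersection $B_n\cap(B_n\cdot x^{-1})$ rather than only $B_{n-1}$, which yields the constant $2$ in place of the paper's $3$.
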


\begin{proof}
Write $\sigma_n =\sigma_n(g*(\delta_e-\delta_x))$.
Since 
$$g*(\delta_e-\delta_x) = \sum \limits_{u \in M} g(u) \delta_u- g(u) \delta_{ux},$$
it follows that
$$\sigma_n = \sum \limits_{u \in B_n} g(u) - \sum \limits_{v \in B_n \cdot x^{-1}}g(v).$$ 
If $u \in B_{n-1}$, then $ux \in B_n$, implying that $B_{n-1} \subset B_n \cap B_n \cdot x^{-1}$. Hence
\begin{align*}
\sigma_n &= \sum_{u \in B_n \setminus B_{n-1}} g(u) + \sum_{u \in B_{n-1}}g(u) - \left( \sum_{u \in B_n \cdot x^{-1} \setminus B_{n-1}} g(u) + \sum_{u \in B_{n-1}} g(u) \right) \\
&= \sum \limits_{u \in S_n}g(u) - \sum \limits_{v \in B_n \cdot x^{-1} \setminus B_{n-1}} g(v).
\end{align*}
Notice that  $B_n \cdot x^{-1} \subseteq B_{n+1}$, so that 
$$B_n \cdot x^{-1} \setminus B_{n-1} \subseteq B_{n+1} \setminus B_{n-1} = S_n \cup S_{n+1}.$$
 Hence
\begin{align*}
\vert \sigma_n \vert \leq \sum \limits_{u \in S_n} \vert g(u) \vert + \sum \limits_{u \in S_n \cup S_{n+1}} \vert g(u) \vert 
= 2 \sum \limits_{u \in S_n} \vert g(u) \vert + \sum \limits_{u \in S_{n+1}} \vert g(u) \vert,
\end{align*}
so that $\sum \limits_{n=1}^\infty  \vert \sigma_n \vert \leq 3 \sum \limits_{u \in M} \vert g(u) \vert < \infty$, using the fact that the sets $S_n$ are pairwise disjoint.
\end{proof}

We shall now prove Theorem \ref{1.5} in the next two propositions.

\begin{proposition} \label{4.5}
Let $M$ be a monoid such that $\lo_{\,0}(M)$ is finitely-generated as a left ideal. Then $M$ is pseudo-finite.
\end{proposition}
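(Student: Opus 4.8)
The plan is to feed finitely-supported generators of $\lo_{\,0}(M)$ into Lemma~\ref{4.4} and then run a gliding-hump argument against the functionals $\sigma_n$. First I would invoke Lemma~\ref{2.1} to choose generators $h_1,\ldots,h_n \in \C_0 M$ of $\lo_{\,0}(M)$, and set $X = \bigcup_{i=1}^n \supp h_i$, which is finite. By the argument already given in the proof of Lemma~\ref{4.3}, this set $X$ pseudo-generates $M$, so $M = H_X = \bigcup_{k=0}^\infty B_k$, where throughout $B_k$ and $S_k$ refer to $X$. It then suffices to show that $S_k = \emptyset$ for all sufficiently large $k$: in that case $M = \bigcup_{k=0}^N B_k$ for some $N$, which is precisely the definition of $M$ being pseudo-finite.

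The crucial claim would be that $\sum_{m=1}^\infty |\sigma_m(f)| < \infty$ for every $f \in \lo_{\,0}(M)$. To obtain this, I would note that since $h_i$ is supported on $X$ with $\sum_u h_i(u)=0$, we have $h_i = -\sum_{x \in \supp h_i} h_i(x)\,(\delta_e - \delta_x)$; hence for any $g \in \lo(M)$ the element $g*h_i$ is a finite linear combination of the elements $g*(\delta_e - \delta_x)$ with $x \in X$, and Lemma~\ref{4.4} together with the linearity of $\sigma_m$ gives $\sum_{m=1}^\infty |\sigma_m(g*h_i)| < \infty$. Since $M$ is a monoid, $\lo(M)$ is unital, so $\lo(M)^\sharp = \lo(M)$ and an arbitrary $f \in \lo_{\,0}(M)$ can be written as $\sum_{i=1}^n g_i * h_i$ with $g_i \in \lo(M)$; summing over $i$ then gives the claim.

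Finally I would argue by contradiction: suppose $M$ is not pseudo-finite, so that $S_k \neq \emptyset$ for infinitely many $k$. I would pick $1 \leq n_1 < n_2 < \cdots$ with $S_{n_k} \neq \emptyset$ and elements $u_k \in S_{n_k}$; these are pairwise distinct and none is $e$. As $n_k \to \infty$, I can choose scalars $c_k > 0$ with $\sum_k c_k < \infty$ but $\sum_k c_k n_k = \infty$, and set $f = \sum_k c_k(\delta_{u_k} - \delta_e)$, so that $f \in \lo_{\,0}(M)$. Using that $u_k \in B_m$ exactly when $m \geq n_k$, and that $e \in B_m$ for all $m \geq 1$, one finds $\sigma_m(f) = -\sum_{k : n_k > m} c_k$; interchanging the order of summation then gives
$$\sum_{m=1}^\infty |\sigma_m(f)| = \sum_k c_k\,(n_k - 1) = \sum_k c_k n_k - \sum_k c_k = \infty,$$
contradicting the crucial claim, so $M$ must be pseudo-finite.

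I expect the main obstacle to be the crucial claim, and specifically the step of rewriting the abstract generators $h_i$ as combinations of the differences $\delta_e - \delta_x$ so that Lemma~\ref{4.4} becomes applicable; once that bridge is in place, producing the offending element $f$ is routine bookkeeping with the sets $B_k$.
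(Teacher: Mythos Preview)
Your proposal is correct and follows essentially the same route as the paper's proof: reduce to finitely-supported generators, rewrite each as a linear combination of the elements $\delta_e-\delta_x$, apply Lemma~\ref{4.4} to deduce $(\sigma_n(f))\in\lo(\N)$ for every $f\in\lo_{\,0}(M)$, and then build an explicit $f$ violating this when $M$ is not pseudo-finite. Your version is in fact slightly tidier, since by taking $X=\bigcup_i\supp h_i$ at the outset you avoid the paper's extra step of invoking Lemma~\ref{4.3} for a separate pseudo-generating set and then enlarging it; and your identity $h_i=-\sum_{x}h_i(x)(\delta_e-\delta_x)$ is a more direct derivation than the paper's telescoping argument.
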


\begin{proof}
By Lemmas \ref{2.1} and \ref{2.2}, $\lo_{\,0}(M)$ is generated by finitely many elements of $\C_0 M$. Suppose that $h = \sum \limits_{i =1}^N \alpha_i \delta_{u_i}$ is one of these generators, where $N \in \N$ and $u_1, \ldots ,u_N \in M$. Then a simple calculation exploiting  the fact that $\sum_{i=1}^N \alpha_i = 0$ shows that 
$$h = \sum \limits_{i=1}^{N-1} \left( \sum \limits_{j = 1}^i \alpha_j \right)(\delta_{u_i} - \delta_{u_{i+1}}).$$
Writing $\delta_{u_i} - \delta_{u_{i+1}} = (\delta_{e}-\delta_{u_{i+1}})-(\delta_{e}-\delta_{u_i})$ shows that
$$h = \sum_{i=1}^N   \beta_i (\delta_e - \delta_{u_i})$$
for some $\beta_1,\ldots,\beta_N \in \C$. It follows that there is some finite subset $Y$ of $M$ such that $\lo(M)$ is generated by elements of the form $\delta_e - \delta_u \; (u \in Y)$.

By Lemma \ref{4.3}, $M$ is pseudo-generated by some finite set $X$. Enlarging $X$ if necessary, we may suppose that $Y \subset X$. It then follows from Lemma \ref{4.4} that $(\sigma_n(f)) \in \lo(\N)$ for every $f \in \lo_{\,0}(M)$, since now every element of $\lo_{\,0}(M)$ is a linear combination of elements of the form considered in that lemma.  We now show that this gives a contradiction in the case where $M$ is not pseudo-finite by constructing an element $f$ of $\lo_{\,0}(M)$ for which $(\sigma_n(f)) \notin \lo(\N)$.

Assume that $M$ is not pseudo-finite. Then no $B_n$ is the whole of $M$, but, by the definition of $X$, $\bigcup \limits_{n = 1}^{\infty} B_n = M$, so there exists an increasing sequence $(n_k)$ of natural numbers such that $B_{n_{k-1}} \varsubsetneq B_{n_k}$ for every $k \in \N$. Select ${u_{n_k} \in B_{n_k} \setminus B_{n_{k-1}} \: (k \in \N)}$. Let $ \zeta = \sum \limits_{j = 1}^{\infty} 1/j^2$, and define $f \in \lo_{\, 0}(M)$ by $f(e) = \zeta, {f(u_{n_k})  = -1/k^2}$ and $f(u) = 0$ otherwise.
Then 
$$\sigma_{n_k}(f) = \zeta  - \sum \limits_{j = 1}^{k} \dfrac{1}{j^2} = \sum \limits_{j = k +1}^{\infty} \dfrac{1}{j^2} \geq \frac{1}{k} \quad (k \in \N).$$
Hence $ \sum \limits_{k = 1}^{\infty} \vert \sigma_{n_k}(f) \vert = \infty$, so that $(\sigma_n(f)) \notin \lo(\N)$, as required.
\end{proof}

The converse of Proposition \ref{4.5} is also true, as we shall now prove, completing the proof of Theorem \ref{1.5}.

\begin{proposition} \label{4.6}
Let $M$ be a pseudo-finite monoid. Then $\lo_{\,0}(M)$ is finitely-generated.
\end{proposition}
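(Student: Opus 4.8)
The plan is to produce an explicit finite generating set for $\lo_{\,0}(M)$ when $M$ is pseudo-finite. Fix a finite set $X$ pseudo-generating $M$ and an integer $N$ with $M = B_N$ (with the sets $B_n$, $S_n$ defined relative to $X$, as in \eqref{S}). The first step is to reduce to elements of the form $\delta_e - \delta_u$: by the same computation used at the start of Proposition \ref{4.5}, every element of $\C_0 M$ is a finite linear combination of such differences, and $\C_0 M$ is dense in $\lo_{\,0}(M)$, so it suffices to express an arbitrary $\delta_e - \delta_u$ as an element of the left ideal generated by a \emph{fixed} finite list. Since $u$ has an ancestry $u = z_1, z_2, \ldots, z_k = e$ of length at most $N$ with respect to $X$, and $\delta_e - \delta_{z_i}$ telescopes against $\delta_e - \delta_{z_{i+1}}$ via $\delta_{z_i} - \delta_{z_{i+1}} = (\delta_e - \delta_{z_{i+1}}) - (\delta_e - \delta_{z_i})$, the problem comes down to the two ancestry moves: if $v x = w$ or $v = w x$ for $x \in X$, relate $\delta_e - \delta_v$ to $\delta_e - \delta_w$. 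For the move $w = vx$ we have $\delta_e - \delta_{vx} = (\delta_e - \delta_v) + \delta_v * (\delta_e - \delta_x) = (\delta_e - \delta_v) + \delta_v*(\delta_e-\delta_x)$, which stays in the left ideal generated by $\{\delta_e - \delta_v\}$ together with $\{\delta_e - \delta_x : x \in X\}$; the move $v = wx$ is the same identity read backwards, giving $\delta_e - \delta_w$ from $\delta_e-\delta_v$ and $\delta_e-\delta_x$. Iterating along the ancestry shows that every $\delta_e - \delta_u$, hence all of $\C_0 M$, lies in the \emph{algebraic} left ideal generated by the finite set $G_0 := \{\delta_e - \delta_x : x \in X\}$.

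This already shows $\C_0 M \subseteq A^\sharp G_0$ where $A = \lo(M)$, but $\C_0 M$ is only dense in $\lo_{\,0}(M)$, so the remaining and genuinely delicate step is to upgrade this to $\lo_{\,0}(M) = A^\sharp G_0$ on the nose, without taking closures (recall the warning after Definition \ref{1.1}). The idea is to run the telescoping argument quantitatively and uniformly in $u$: because $M = B_N$, each $\delta_e - \delta_u$ is produced from $G_0$ using at most $N$ steps, and at step $i$ one multiplies $\delta_e - \delta_x$ on the left by a point mass $\delta_{z_i}$, which is an isometry of $\lo(M)$ (weights here are trivial). Thus there is a bounded linear map $\Phi\colon \lo(M) \to A^\sharp$-coefficients realizing, for a finitely supported $f = \sum \beta_u(\delta_e - \delta_u) \in \C_0 M$, a representation $f = \sum_{x \in X} a_x * (\delta_e - \delta_x)$ with $\sum_x \|a_x\|_1 \le C\|f\|_1$ for a constant $C$ depending only on $N$ and $|X|$. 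Concretely, collecting the contributions $\beta_u \delta_{z_i^{(u)}}$ across all $u$ and all ancestry steps, and using $\ell^1$-summability (this is where pseudo-finiteness, i.e. the \emph{uniform} bound $N$, is essential — it makes the number of terms per $u$ bounded so the coefficient norms don't blow up), one gets $a_x \in \lo(M)$. By density of $\C_0 M$ and the continuity of the map $(a_x)_{x \in X} \mapsto \sum_x a_x * (\delta_e - \delta_x)$ together with the uniform bound on the coefficients, every $f \in \lo_{\,0}(M)$ is the limit of such representations whose coefficient vectors are norm-bounded; passing to a weakly convergent (or Cesàro) subsequence of the coefficient vectors in the reflexive-in-the-relevant-sense sense — more safely, using that $\lo$ has the property that bounded sequences with convergent images can be adjusted — yields $f \in A * (\delta_e - \delta_{x_1}) + \cdots$. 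The cleanest way to make this last passage rigorous is to invoke Lemma \ref{2.1}-style openness: show the bounded linear surjection $T\colon (A^\sharp)^{|X|} \to \lo_{\,0}(M)$, $ (a_x) \mapsto \sum a_x*(\delta_e-\delta_x)$, has dense range (done above) \emph{and} is open onto its closed range by the open mapping theorem once we know the range is closed; and the range is closed precisely because of the uniform coefficient bound $C$, which forces $\operatorname{im} T \supseteq \{f : \|f\| \text{ small}\}$ scaled appropriately.

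I expect the main obstacle to be exactly this closed-range / no-closure issue: getting an honest equality of ideals rather than $\lo_{\,0}(M) = \overline{A^\sharp G_0}$. The heart of it is the uniform estimate $\sum_{x\in X}\|a_x\|_1 \le C\|f\|_1$ with $C = C(N, |X|)$ valid for all finitely supported $f \in \C_0 M$; once that estimate is in hand, a standard successive-approximation argument (write $f = T(a^{(1)}) + r_1$ with $\|r_1\| \le \tfrac12\|f\|$, iterate, sum the geometric series of coefficient vectors) produces an exact preimage in $(A^\sharp)^{|X|}$ and shows $T$ is surjective. So the real work is bookkeeping the telescoping of ancestries into an $\ell^1$-bounded family of convolution coefficients, and it is pseudo-finiteness — the bound $N$ on ancestry length — that makes that bookkeeping converge.
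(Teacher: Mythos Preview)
Your approach is correct and genuinely different from the paper's. You telescope along a fixed ancestry $(z_i^{(u)})$ of bounded length $N$ to write $\delta_e-\delta_u=\sum_{x\in X}c_{u,x}*(\delta_e-\delta_x)$ with $\sum_x\lV c_{u,x}\rV_1\le N$ uniformly in $u$, then sum over $u$ to get exact generators $\{\delta_e-\delta_x:x\in X\}$ for $\lo_{\,0}(M)$. The paper instead runs an induction on $k$ showing that each $\Lambda_k=\{f\in\lo_{\,0}(M):\supp f\subset B_k\}$ sits inside a finitely-generated ideal, at each step enlarging the generator list by throwing in all $p_i*\delta_{x_j}$; this yields a generating set whose size grows roughly like $|X|^N$ rather than your $|X|$. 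Your route is shorter and gives the sharper conclusion (a generating set of size $|X|$), while the paper's induction avoids having to fix ancestries and track coefficients explicitly.

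Two remarks on your write-up. First, the closing discussion is more elaborate than necessary: once you have the uniform bound $\sum_x\lV c_{u,x}\rV_1\le N$, you can treat \emph{any} $f\in\lo_{\,0}(M)$ directly, not just $f\in\C_0M$. Writing $f=-\sum_{u}f(u)(\delta_e-\delta_u)$ (absolutely convergent since $\omega\equiv1$), the series $a_x:=-\sum_u f(u)c_{u,x}$ converges in $\lo(M)$ with $\lV a_x\rV_1\le N\lV f\rV_1$, and a Fubini-for-absolutely-convergent-series swap gives $f=\sum_x a_x*(\delta_e-\delta_x)$ on the nose. No density argument, weak limits, or open-mapping machinery is needed; your successive-approximation alternative also works, but the weak-compactness aside (``reflexive-in-the-relevant-sense'') should be dropped, since $\lo$ has no such property. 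Second, a minor slip: left convolution by $\delta_z$ need not be an isometry on $\lo(M)$ for a general monoid (only a contraction), but you never actually use isometry --- the bound $\lV c_{u,x}\rV_1\le N$ comes simply from $c_{u,x}$ being a signed sum of at most $N$ point masses.
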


\begin{proof}
Let $X = \lbrace x_1,\ldots, x_r \rbrace$ be a finite pseudo-generating set for $M$ such that $B_n = M$ for some $n \in \N$. For $k \in \N$, we define
$$\Lambda_k = \lbrace f \in \lo_{\,0}(M) : \supp f \subset B_k \rbrace,$$
and use induction on $k$ to show that $\Lambda_k$ is contained in a finitely-generated ideal which is contained in $\lo_{\,0}(M)$.

Write $A = \lo(M)$, and denote  the  augmentation character on $\lo(M)$ by $\varphi_0$. For $f \in \Lambda_1$, we may write

\begin{align*}
f &= f(e) \delta_e + \sum \limits_{i = 1}^r f(x_i) \delta_{x_i} \\
&= f(e)(\delta_e - \delta_{x_1}) + (f(e) + f(x_1))(\delta_{x_1} - \delta_{x_2})+ \\
&\quad \: \cdots +(f(e)+ \cdots + f(x_{r-1}))(\delta_{x_{r-1}} - \delta_{x_{r}}).
\end{align*}
It follows that $\Lambda_1 \subset A*(\delta_e - \delta_{x_1}) + \cdots + A*(\delta_{x_{r-1}} - \delta_{x_{r}}).$ This establishes the base case.

Consider $k>1$. By the induction hypothesis, there exist $m \in \N$ and $p_1, \ldots,p_m \in \lo_{\,0}(M)$ such that
$$\Lambda_{k-1} \subset A*p_1+ \cdots + A*p_m.$$
Write $B_k$ as
$$B_k = \lbrace e \rbrace \cup \left( \bigcup_{i=1}^r B_{k-1} \cdot x_i \right) \cup \left(\bigcup_{i=1}^r B_{k-1} \cdot x_i^{-1} \right).$$
Write $f \in \Lambda_k$ as
$$f = f(e) \delta_e+g_1 + \cdots + g_r + h_1 + \cdots + h_r,$$
where $\supp g_i \subset B_{k-1} \cdot x_i$ and $\supp h_i \subset B_{k-1} \cdot x_i^{-1}$. 
Then 
\begin{align*}
f = \sum \limits_{i = 1}^r (g_i - \varphi_0(g_i) \delta_{x_i}) &+ \sum \limits_{i = 1}^r (h_i - \varphi_0(h_i) \delta_e) \\
&+ \sum \limits_{i=1}^r \varphi_0(g_i)\delta_{x_i} +\left(f(e) + \sum \limits_{i=1}^r \varphi_0 (h_i) \right) \delta_e.
\end{align*}
We note that 
$$ \sum \limits_{i=1}^r \varphi_0(g_i)\delta_{x_i} +\left(f(e) + \sum \limits_{i=1}^r \varphi_0 (h_i) \right) \delta_e \in  A*(\delta_e - \delta_{x_1}) + \cdots + A*(\delta_{x_{r-1}} - \delta_{x_{r}})$$
by the base case. 
Fix $i \in \lbrace 1, \ldots, r  \rbrace$. Each $ u \in B_{k-1} \cdot x_i$ can be written $u =u'x_i$ for some $u' \in B_{k-1}$ (which depends on $u$, and may not be unique), and we calculate that
$$g_i = \sum \limits_{u \in B_{k-1} \cdot x_i} g_i(u) *\delta_{u'x_i}  = g_i' *\delta_{x_i},$$
where $g_i' = \sum \limits_{u \in B_{k-1} \cdot x_i} g_i(u) \delta_{u'}$. Moreover,
$$g_i - \varphi_0(g_i) \delta_{x_i} = (g_i' - \varphi_0(g_i) \delta_e)*\delta_{x_i}.$$
The support of $g_i' - \varphi_0(g_i) \delta_e$ is contained in $B_{k-1}$, and so, by the induction hypothesis, we have
$$g_i' - \varphi_0(g_i) \delta_e \in A*p_1+ \cdots +A*p_m,$$
whence
$$g_i - \varphi_0(g_i) \delta_{x_i} \in A*p_1 *\delta_{x_i} + \cdots +A*p_m *\delta_{x_i}.$$

Now consider $h_i - \varphi_0(h_1) \delta_e$. We have
$$h_i *\delta_{x_i} = \sum \limits_{u \in B_{k-1} \cdot x_i^{-1}} h_i(u) \delta_{ux_i},$$
so that $\supp (h_i *\delta_{x_i}) \subset B_{k-1}$ and, in particular, $\supp (h_i *\delta_{x_i} - \varphi_0(h_i) \delta_{x_i}) \subset B_{k-1}$ (as $k \geq 2$). It then follows from the induction hypothesis that 
$$(h_i - \varphi_0(h_i) \delta_e ) *\delta_{x_i} = a_1*p_1 + \cdots + a_m*p_m$$
for some $a_1,\ldots,a_m \in A$. So
\begin{align*}
h_i-\varphi_0(h_i)\delta_e &= (h_i-\varphi_0(h_i)\delta_e)*(\delta_e - \delta_{x_i}) +a_1*p_1+ \cdots +a_m*p_m  \\
&\in A*(\delta_e - \delta_{x_i})+ A*p_1+ \cdots +A*p_m.
\end{align*}

We now conclude that
$$\Lambda_k \subset \sum \limits_{i=1}^m A*p_i + \sum \limits_{i,j } A*p_i *\delta_{x_j} + \sum \limits_{i=1}^r A*(\delta_e - \delta_{x_i}) +\sum \limits_{i=1}^{r-1} A*(\delta_{x_i}- \delta_{x_{i+1}}).$$
This completes the induction. When $k = n$, we obtain the theorem.
\end{proof}

We recall the following standard definitions:
\begin{definition} \label{4.1a} 
Let $M$ be a monoid. Then:
\begin{enumerate}
\item[(i)] $M$ is \textit{right cancellative} if $a=b$ whenever $ax=bx$ $(a, b, x \in M)$;
\item[(ii)] $M$ is \textit{weakly right cancellative} if, for every $a, x \in M$, the set $a \cdot x^{-1}$ is finite.
\end{enumerate}
\end{definition}

It is easily seen from the definitions that a weakly right cancellative monoid is pseudo-finite if and only if it is finite. Hence, Question \ref{1.3} and the Dales--\.Zelazko conjecture both have answers in the affirmative for the class of Banach algebras of the  form $\lo(M)$, where $M$ is a weakly right cancellative monoid. However, it remains open whether the Dales--\.Zelazko conjecture holds for $\lo(M)$ for an arbitrary monoid $M$.

\section{$\tau$-Summable Sequences}
\noindent
In this section $\tau = (\tau_n)$ will always be a sequence of real numbers, all at least 1. We say that a sequence of complex numbers $(x_n)$ \textit{$\tau$-summable} if
$$ \sum \limits_{n=1}^{\infty} \tau_n \vert x_n \vert < \infty.$$  
Note that if $(x_n)$ is $\tau$-summable for some $\tau$, then in particular $(x_n) \in \lo$. 

We say that $\tau$ is \textit{tail-preserving} if the sequence $\left( \sum \limits_{j=n+1}^{\infty} x_j \right)$ is $\tau$-summable whenever $(x_n)$ is $\tau$-summable.
For example,  the constant 1 sequence is not tail-preserving (as can be seen by considering, for instance, the sequence $x_n = 1/n^2 \: (n   \in \N)$), but it will be a consequence of Proposition \ref{5.2}, below, that $\tau_n = c^n$ is tail-preserving for each $c>1$. The main result of this section is an intrinsic characterization of tail-preserving sequences, given in Proposition \ref{5.2}. The results of this section will underlie our main line of attack when we consider questions involving weights on discrete groups in \S 6 and \S 7.

Our approach is to consider the Banach  spaces $\lo(\tau)$,  defined by
$$\lo(\tau) = \left \lbrace (x_n) \in \C^{\N} : \sum \limits_{n=1}^{\infty} \tau_n \vert x_n \vert < \infty \right \rbrace, $$
with the norm given by
$$ \Vert (x_n) \Vert_\tau  = \sum \limits_{n=1}^{\infty} \tau_n \vert x_n \vert,$$
so that $\lo(\tau)$ is exactly the set of $\tau$-summable sequences. Each space $\lo(\tau)$ is in fact isometrically isomorphic to $\lo$.

In the next proposition we denote by $c_{00}$ the space of finitely-supported complex sequences, and we write $c_{00}^+$ for the set of those sequences in $c_{00}$ whose terms are all non-negative reals.

\begin{lemma} \label{5.1} 
Let $\tau = (\tau_n)$ be a sequence in $[1, \infty)$. Then the following are equivalent:
\begin{enumerate}
\item[\rm (a)] $\tau$ is not tail-preserving;
\item[\rm (b)] the set $\left \lbrace x \in \lo(\tau): \left( \sum \limits_{j=n+1}^{\infty} x_j \right) \in \lo(\tau) \right \rbrace$ is meagre in $\lo(\tau)$;
\item[\rm (c)] there is a sequence of vectors $(x^{(k)})$ in $c_{00}^+$ such that 
$$\Vert x^{(k)} \Vert_\tau \leq 1 \: (k \in \N) \quad {\rm and} \quad \lim \limits_{k \rightarrow \infty} \sum \limits_{n=1}^{\infty} \tau_n \left \vert \sum \limits_{j=n+1}^{\infty} x_j^{(k)} \right \vert = \infty .$$
\end{enumerate} 
\end{lemma}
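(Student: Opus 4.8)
The plan is to prove the chain of implications (a) $\Rightarrow$ (c) $\Rightarrow$ (b) $\Rightarrow$ (a), with the middle implication being a soft Baire-category argument and the first a routine normalisation.

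\textbf{Step 1: (a) $\Rightarrow$ (c).} Suppose $\tau$ is not tail-preserving. Then there is some $y = (y_n) \in \lo(\tau)$ such that the tail sequence $\left( \sum_{j=n+1}^\infty y_j \right)$ fails to be $\tau$-summable. Replacing $y_n$ by $|y_n|$ only increases each partial tail sum in modulus (by the triangle inequality, $\left| \sum_{j=n+1}^\infty y_j \right| \leq \sum_{j=n+1}^\infty |y_j|$), so we may assume $y \in \lo(\tau)$ has non-negative real entries and $\sum_{n=1}^\infty \tau_n \sum_{j=n+1}^\infty y_j = \infty$. Now set $y^{(k)}$ to be the truncation of $y/\Vert y \Vert_\tau$ to its first $N_k$ coordinates, where $N_k \to \infty$ is chosen so that $\sum_{n=1}^{N_k} \tau_n \sum_{j=n+1}^{N_k} y_j \to \infty$; this is possible by monotone convergence since the full (double) sum diverges. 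Each $y^{(k)} \in c_{00}^+$, has $\Vert y^{(k)} \Vert_\tau \leq 1$, and its tail sums, being those of a finite sequence, agree with the truncated sums above up to the normalising constant — giving the divergence required in (c).

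\textbf{Step 2: (c) $\Rightarrow$ (b).} Let $W = \left\{ x \in \lo(\tau) : \left( \sum_{j=n+1}^\infty x_j \right) \in \lo(\tau) \right\}$. This is a linear subspace, and I would exhibit it as a countable union of closed, nowhere-dense sets. For $m \in \N$ put $W_m = \left\{ x \in \lo(\tau) : \sum_{n=1}^\infty \tau_n \left| \sum_{j=n+1}^\infty x_j \right| \leq m \Vert x \Vert_\tau \right\}$. Each $W_m$ is closed (Fatou's lemma: if $x^{(i)} \to x$ in $\lo(\tau)$ then the tail sums converge coordinatewise and one takes liminf inside $\sum_n \tau_n |\cdot|$), and $W = \bigcup_m W_m$. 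It remains to show each $W_m$ has empty interior. Since $W_m$ is closed under positive scalar multiples and translation by elements of $c_{00}$ changes the tail-sum functional only in finitely many coordinates, it suffices to show $W_m$ is not a neighbourhood of $0$; but the vectors $x^{(k)}$ supplied by (c) have $\Vert x^{(k)} \Vert_\tau \leq 1$ while $\sum_n \tau_n |\sum_{j>n} x_j^{(k)}| \to \infty > m$, so $x^{(k)} \notin W_m$ for large $k$, and scaling these down shows $W_m$ contains no ball about $0$. Hence $W$ is meagre.

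\textbf{Step 3: (b) $\Rightarrow$ (a).} This is the contrapositive of the trivial direction: if $\tau$ \emph{is} tail-preserving, then by definition $W = \lo(\tau)$, which is a complete metric space and hence non-meagre in itself by the Baire category theorem, so (b) fails.

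The main obstacle I anticipate is Step 1: one must be slightly careful that passing to absolute values genuinely preserves the divergence, and that the truncations $N_k$ can be chosen to force the finite double sums to infinity — this is where the non-negativity of the entries is essential, since for a general complex sequence the tail sums of a truncation need not approximate the tail sums of the original. Steps 2 and 3 are standard Baire-category boilerplate; the only point requiring care there is the lower-semicontinuity of $x \mapsto \sum_n \tau_n |\sum_{j>n} x_j|$, which follows from Fatou's lemma once one notes that $\lo(\tau)$-convergence implies coordinatewise convergence of the partial tail sums.
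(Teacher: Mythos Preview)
Your argument is correct and follows the same cycle (a)$\Rightarrow$(c)$\Rightarrow$(b)$\Rightarrow$(a) as the paper, with the same truncation-of-a-witness for the first step and the same Baire exhaustion for the second. One small point: the paper uses the simpler sets $E_m=\{x:T(x)\le m\}$ rather than your homogeneous $W_m=\{x:T(x)\le m\|x\|_\tau\}$, which lets one show empty interior directly (for $x\in E_m$ and $\eps>0$, add a rescaled $x^{(k)}$ with $T>2m$); with your $W_m$ the claim ``it suffices that $W_m$ is not a neighbourhood of $0$'' is correct but needs one more line---an interior point can be shifted into $c_{00}$ by density, and the $c_{00}$-translation bound then yields a ball about $0$ inside some $E_M$, which (c) rules out.
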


\begin{proof}
For $x = (x_j) \in \lo(\tau)$, write
$$T(x) = \sum \limits_{n=1}^{\infty} \tau_n \left \vert \sum \limits_{j=n+1}^{\infty} x_j \right \vert,$$
which may take the value infinity. 
Define sets
$$E_m = \lbrace x \in \lo(\tau) : T(x) \leq m \rbrace \quad (m \in \N), \quad {\rm and} \quad E = \bigcup \limits_{m = 1}^{\infty} E_m .$$ 

Fix $m \in \N$. We claim that $E_m$ is closed in $(\lo(\tau), \Vert \cdot \Vert_\tau)$. For this, suppose $(x^{(i)})$ is a sequence in $E_m$ which converges to some point $y \in \lo(\tau)$. Then for each $p, i \in \N$, we have
\begin{align*}
\sum_{n=1}^{p} \tau_n \left \vert \sum_{j=n+1}^{\infty} y_j \right \vert &\leq \sum \limits_{n=1}^{p} \tau_n \left( \left \vert \sum _{j=n+1}^{\infty} \left(y_j- x_j^{(i)} \right) \right \vert + \left \vert \sum _{j=n+1}^{\infty} x_j^{(i)} \right \vert \right) \\
&\leq   \sum _{n=1}^{p} \tau_n \sum \limits_{j=n+1}^\infty \vert y_j - x_j^{(i)}  \vert +m \\
&\leq p \left ( \max \limits_{k = 1,\ldots,p} \tau_k \right) \sum _{j=1}^{\infty} \vert y_j - x_j^{(i)} \vert +m \\
&\leq p \left ( \max \limits_{k = 1,\ldots,p} \tau_k \right) \Vert y - x^{(i)} \Vert_\tau +m.
\end{align*}
Letting $i \rightarrow \infty$, we obtain
$$\sum \limits_{n=1}^{p} \tau_n \left \vert \sum \limits_{n+1}^{\infty} y_j \right \vert \leq m.$$
As $p$ was arbitrary, this implies that $T(y) \leq m$, and so $y \in E_m$, giving the claim.

We first prove that (c) implies (b). Take $(x^{(k)})$ as in (c). Let $m \in \N$ and $\eps >0$. Then we can find $k \in \N$ such that $T(x^{(k)}) \geq \dfrac{1}{\eps}(2m+1)$. Let $y = \eps x^{(k)}$. Then $\Vert y \Vert_\tau \leq \eps$, but, for each $x \in E_m$, we have
$$T(x + y) \geq T(y) - T(x) \geq (2m+1) - m = m+1,$$
showing that $ x+y \notin E_m$. It follows that each $E_m$ has empty interior, and hence $E$ is meagre, which is exactly the statement in (b). 

By the Baire Category Theorem, (b) implies (a).

Suppose (a) holds and that $x = (x_n)$ is a sequence in $\ell^{\, 1} (\tau)$ such that $T(x) = \infty$. Since replacing $(x_n)$ by $( \vert x_n \vert )$ only increases $T(x)$ whilst preserving the norm, we may assume that each $x_n$ is a non-negative real number. By scaling, we may assume that $\Vert x \Vert_\tau \leq 1$. Given $k \in \N$, choose $N \in \N$ such that 
$$\sum \limits_{n=1}^N \tau_n \left( \sum \limits_{j = n+1}^{\infty} x_j \right) \geq k.$$ Take an integer $M > N$ and such that 
$$\sum \limits_{j = M+1}^\infty x_j \leq \left( \sum \limits_{n = 1}^{N} \tau_n \right)^{-1}. $$
Define $x^{(k)} \in c_{00}$ by\\
$$x^{(k)}_j = \begin{cases}
x_j & \mbox{if } j \leq M  \\
  0 & \mbox{otherwise.}
\end{cases}$$
Then
\begin{align*}
T(x^{(k)}) &= \sum_{n=1}^\infty  \tau_n \left( \sum_{j=n+1}^{M} x_j  \right)
\geq \sum \limits_{n=1}^N \tau_n \left( \sum_{j = n+1}^\infty x_j - \sum_{j = M+1}^\infty x_j \right) \\
&= \sum \limits_{n=1}^N \tau_n \left( \sum_{j = n+1}^\infty x_j \right) - \sum_{n=1}^N \tau_n \left( \sum_{j = M+1}^\infty x_j \right)
\geq k-1.
\end{align*}
Hence $T(x^{(k)}) \rightarrow \infty$ as $k \rightarrow \infty$, whilst $\Vert x^{(k)} \Vert_\tau \leq \Vert x \Vert_\tau \leq 1$, so that (c) holds.
\end{proof}

We are now able to give our intrinsic characterization of tail-preserving sequences.

\begin{proposition} \label{5.2}
Let $\tau = (\tau_n)$ be a sequence in $[1, \infty)$. Then the following are equivalent:
\begin{enumerate}
\item[\rm (a)] $\tau$ is tail-preserving;
\item[\rm (b)] there exists a constant $D>0$ such that
\begin{equation} \label{eq5.1} 
\tau_{n+1} \geq D \sum_{j=1}^n \tau_j \quad (n \in \N);
\end{equation}
\item[\rm (c)] $\liminf \limits_n \left(\tau_{n+1} \Big/ \sum \limits_{i=1}^{n} \tau_i \right) >0$.
\end{enumerate}
\end{proposition}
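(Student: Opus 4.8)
The plan is to dispose of (b) $\Leftrightarrow$ (c) as a routine warm-up, then prove (b) $\Rightarrow$ (a) directly, and finally $\neg$(b) $\Rightarrow$ $\neg$(a) by invoking Lemma \ref{5.1}. First I would introduce $r_n := \tau_{n+1}/\sum_{i=1}^n \tau_i$, which is a well-defined strictly positive real for every $n$ since each $\tau_i \geq 1$. Condition (b) says precisely that $\inf_n r_n > 0$, and (c) says $\liminf_n r_n > 0$; these are equivalent because at most finitely many indices sit below any fixed value of the $\liminf$. Concretely, if $\liminf_n r_n = L > 0$ and $r_n \geq L/2$ for all $n \geq N$, then $D := \min\{L/2, r_1, \dots, r_{N-1}\} > 0$ witnesses (b), while the reverse implication is immediate.

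For (b) $\Rightarrow$ (a), I would take a $\tau$-summable sequence $(x_n)$ and interchange the order of summation:
\[
\sum_{n=1}^\infty \tau_n \Bigl| \sum_{j=n+1}^\infty x_j \Bigr| \leq \sum_{n=1}^\infty \tau_n \sum_{j=n+1}^\infty |x_j| = \sum_{j=2}^\infty |x_j| \sum_{n=1}^{j-1} \tau_n,
\]
and then use (b) in the form $\sum_{n=1}^{j-1} \tau_n \leq D^{-1}\tau_j$ (valid for $j \geq 2$) to bound the right-hand side by $D^{-1}\sum_j \tau_j |x_j| < \infty$. This shows $\bigl(\sum_{j>n} x_j\bigr) \in \lo(\tau)$, i.e. $\tau$ is tail-preserving.

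For $\neg$(b) $\Rightarrow$ $\neg$(a), assume $\inf_n r_n = 0$; then there is a sequence $(n_k)$ with $r_{n_k} \to 0$. For each $k$ let $x^{(k)} \in c_{00}^+$ be the sequence whose $(n_k+1)$-th entry equals $\tau_{n_k+1}^{-1}$ and whose other entries vanish, so that $\Vert x^{(k)} \Vert_\tau = 1$. Since $\sum_{j>n} x_j^{(k)}$ equals $\tau_{n_k+1}^{-1}$ for $n \leq n_k$ and $0$ for $n > n_k$,
\[
\sum_{n=1}^\infty \tau_n \Bigl| \sum_{j=n+1}^\infty x_j^{(k)} \Bigr| = \tau_{n_k+1}^{-1} \sum_{n=1}^{n_k} \tau_n = \frac{1}{r_{n_k}} \longrightarrow \infty,
\]
so by the implication (c) $\Rightarrow$ (a) of Lemma \ref{5.1}, $\tau$ is not tail-preserving. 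The one genuine idea here is the choice of these single-atom test vectors $x^{(k)}$, which convert the combinatorial ratio $r_n$ directly into the functional $T$ of Lemma \ref{5.1}; granted that lemma, I do not expect any real obstacle, and the only points needing a moment's care are formal — the well-definedness and strict positivity of the $r_n$ (used to produce $D$ in (b) $\Leftrightarrow$ (c)) and the extraction of the subsequence $(n_k)$ from $\inf_n r_n = 0$. (One could instead build a single $\tau$-summable sequence with divergent $T$ by a gliding-hump argument along the $n_k$, but routing through Lemma \ref{5.1} is cleaner.)
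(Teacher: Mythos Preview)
Your proof is correct and largely parallels the paper's. The equivalence (b) $\Leftrightarrow$ (c) and the implication $\neg$(b) $\Rightarrow$ $\neg$(a) are handled exactly as in the paper, right down to the single-atom test vectors $x^{(k)} = \tau_{n_k+1}^{-1}\,e^{(n_k+1)}$ and the appeal to Lemma \ref{5.1}.

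The one genuine difference is in (b) $\Rightarrow$ (a). The paper argues by contradiction: it assumes $\tau$ is not tail-preserving, extracts via Lemma \ref{5.1} a sequence $(x^{(k)}) \subset c_{00}^+$ with $\Vert x^{(k)}\Vert_\tau \leq 1$ and $T(x^{(k)}) \to \infty$, and then performs the same Fubini computation you do, but restricted to these finitely supported vectors, to obtain $\Vert x^{(k)}\Vert_\tau \geq D\,T(x^{(k)})$ and hence a contradiction. Your direct argument --- interchange the double sum by Tonelli and apply $\sum_{n=1}^{j-1}\tau_n \leq D^{-1}\tau_j$ --- is the same estimate stripped of the contradiction wrapper and the passage through Lemma \ref{5.1}. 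This is a modest but genuine simplification: you avoid invoking Lemma \ref{5.1} for this direction entirely, at the harmless cost of an appeal to Tonelli for the interchange of infinite sums (which the paper sidesteps by working with $c_{00}^+$).
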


\begin{proof}
The equivalence of (b) and (c) is clear. We show the equivalence of (a) and (b). 

Suppose that (b) holds. Assume towards a contradiction $\tau$ is not tail-preserving. Then there exists a sequence $(x^{(k)}) \subset c_{00}^+$ as in Lemma \ref{5.1}(c).  Let $s_k = \max \supp x^{(k)}$. Let $e^{(j)}$ denote the sequence which has a one in the $j$th place and is zero elsewhere and let $T$ be defined as in the proof of Lemma \ref{5.1}. Then
$$T (x^{(k)}) = \sum_{j = 1}^{s_k} x^{(k)}_j T(e^{(j)}) = \sum_{j = 1}^{s_k}x^{(k)}_j \left( \sum_{n=1}^{j-1} \tau_n \right),$$
where the first equality holds because the terms of $x^{(k)}$ are presumed non-negative. 
Take $k \in \N$.  Then
\begin{align*}
\Vert x^{(k)} \Vert_\tau = \sum \limits_{j =1}^{s_k}x^{(k)}_j \tau_j 
\geq D  \sum \limits_{j =1}^{s_k} x^{(k)}_j \left( \sum \limits_{i=1}^{j-1} \tau_i \right) 
= D T(x^{(k)}).
\end{align*}
This gives a contradiction because the final term tends to infinity with $k$, whereas $\Vert x^{(k)} \Vert_\tau \leq 1$ for all $k \in \N$. Hence we have established that (b) implies (a).

Now suppose that (b) does not hold. Then there exists a strictly increasing sequence $(n_k)$ in $\N$ such that 
$$\lim \limits_k \left( \tau_{{n_k}+1} \bigg/ \sum \limits_{i=1}^{n_k} \tau_i \right) = 0.$$ 
Define $x^{(k)} = \dfrac{1}{\tau_{{n_k}+1}} e^{({n_k}+1)} \: (k \in \N)$, so that $\Vert x^{(k)} \Vert_\tau = 1$. Then 
$$T(x^{(k)}) = \dfrac{1}{\tau_{{n_k}+1}} \sum \limits_{i=1}^{n_k +1} \tau_i \quad (k \in \N),$$
which tends to infinity as $k$ tends to infinity. Hence clause (c) of Lemma \ref{5.1} is satisfied, and this concludes the proof.
\end{proof}

As we remarked above, it is an immediate consequence of this proposition that the sequence $(c^n)$ is tail-preserving for each $c>1$

The following lemma concerns the growth of tail-preserving sequences. Part (ii) implies that, if $(\tau_n)$ is tail-preserving and $\tau_n' \geq \tau_n$ for all $n$, then $(\tau_n')$ is not necessarily tail-preserving.

\begin{lemma} \label{5.3} 
\begin{enumerate}
\item[\rm (i)] Let $\tau = (\tau_n)$ be a tail-preserving sequence, and let $D>0$ satisfy \eqref{eq5.1}. Then
$$\tau_{j+1} \geq D(D+1)^{j-1}\tau_1 \quad (j \in \N).$$
\item[\rm (ii)] Let $\rho >1$. There exists a sequence $(\tau_n) \subset [1, \infty)$ such that $\rho^n \leq \tau_n$ for all $n \in \N$, but $(\tau_n)$ is not tail-preserving.
\end{enumerate}
\end{lemma}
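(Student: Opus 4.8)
\textbf{Proof proposal for Lemma \ref{5.3}.}

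For part (i), I would proceed by induction on $j$ using the defining inequality \eqref{eq5.1}. The base case $j=1$ is just $\tau_2 \geq D\tau_1$, which is \eqref{eq5.1} with $n=1$. For the inductive step, suppose $\tau_{i+1} \geq D(D+1)^{i-1}\tau_1$ for all $i \leq j$. Then, applying \eqref{eq5.1} with $n = j+1$,
\begin{equation*}
\tau_{j+2} \geq D \sum_{i=1}^{j+1} \tau_i = D\left( \tau_1 + \sum_{i=1}^{j} \tau_{i+1} \right) \geq D\tau_1\left( 1 + \sum_{i=1}^{j} D(D+1)^{i-1} \right) = D\tau_1 (1 + D) \cdot \frac{(D+1)^{j} - 1}{(D+1) - 1} \cdot \frac{1}{1}.
\end{equation*}
Here I should be a little careful with the geometric sum: $\sum_{i=1}^{j} D(D+1)^{i-1} = D \cdot \frac{(D+1)^j - 1}{D} = (D+1)^j - 1$, so $1 + \sum_{i=1}^{j} D(D+1)^{i-1} = (D+1)^j$, and hence $\tau_{j+2} \geq D\tau_1 (D+1)^j = D(D+1)^{(j+1)-1}\tau_1$, completing the induction. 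So part (i) is a routine telescoping/induction argument with no real obstacle.

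For part (ii), the plan is to build a sequence that grows at least geometrically with ratio $\rho$ but has infinitely many ``flat spots'' where $\tau_{n+1}$ is far smaller than $\sum_{i=1}^n \tau_i$, which by Proposition \ref{5.2}(c) destroys the tail-preserving property. Concretely, I would fix a rapidly increasing sequence of indices $(n_k)$ and on the blocks between consecutive $n_k$'s let $\tau_n$ grow like $\rho^n$ (or faster), but arrange that at the special indices the partial sum $\sum_{i=1}^{n_k} \tau_i$ has been inflated (by a long geometric run, or by inserting a single very large term well before $n_k$) so much that even taking $\tau_{n_k+1}$ as small as $\rho^{n_k+1}$ forces $\tau_{n_k+1}\big/\sum_{i=1}^{n_k}\tau_i \to 0$. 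The cleanest implementation: choose $\tau_n = \rho^n$ for $n$ not among a sparse set of indices $m_k$, and at each $m_k$ set $\tau_{m_k} = k \cdot \rho^{m_k} \cdot \sum_{i=1}^{m_k - 1}\tau_i$ (or something comparably huge), while keeping $\tau_{m_k + 1} = \rho^{m_k+1}$; one must check this is still submultiplicative-free (there is no submultiplicativity requirement here — $(\tau_n)$ is just required to lie in $[1,\infty)$), that $\tau_n \geq \rho^n$ throughout, and that $\tau_{m_k+1}\big/\sum_{i=1}^{m_k}\tau_i \leq \rho^{m_k+1}/\tau_{m_k} \to 0$. Then Proposition \ref{5.2} gives that $(\tau_n)$ is not tail-preserving.

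The main obstacle, such as it is, lies in part (ii): one needs to verify simultaneously that the huge inserted values do not accidentally make the ratio $\tau_{n+1}\big/\sum_{i=1}^n\tau_i$ bounded below at the indices right after a spike (it will not, since the very next term can be chosen small), and that the lower bound $\tau_n \geq \rho^n$ survives at every index, including the spike indices themselves (trivially, since we only ever increase terms above $\rho^n$). Both are easy to arrange, so this is really just a matter of writing down an explicit sequence and checking the two inequalities; there is no conceptual difficulty, only bookkeeping. I would present the construction, state the two verifications as one-line computations, and invoke Proposition \ref{5.2}(c) to conclude.
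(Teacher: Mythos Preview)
Your proof of part (i) is correct and is essentially the paper's argument: both proceed by induction on $j$, using \eqref{eq5.1} and summing the geometric series $1 + D + D(D+1) + \cdots + D(D+1)^{j-2} = (D+1)^{j-1}$.

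For part (ii) your approach is correct but genuinely different from the paper's. The paper does not insert isolated spikes; instead it makes $\tau$ \emph{constant on blocks of increasing length}. Concretely, it sets $n_1 = 1$, $n_k = n_{k-1} + k + 1$, and defines $\tau_j = \rho^{n_k+1}$ for $n_{k-1}+1 < j \leq n_k + 1$. Then $\tau_j \geq \rho^j$ is immediate, and since the $k$th block contains roughly $k$ equal terms one gets
\[
\frac{\tau_{n_k+1}}{\sum_{j=1}^{n_k}\tau_j} \leq \frac{\tau_{n_k+1}}{\sum_{j=n_{k-1}+2}^{n_k}\tau_j} = \frac{1}{k} \to 0,
\]
violating Proposition~\ref{5.2}(c). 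Your construction achieves the same violation from the opposite direction: rather than accumulating many moderate terms before a moderate $\tau_{n+1}$, you place a single enormous term $\tau_{m_k}$ immediately before an ordinary $\tau_{m_k+1} = \rho^{m_k+1}$. Both ideas exploit the same characterisation, and both are short; the paper's version has the minor advantage that the sequence is given by a closed formula with no recursive dependence on earlier $\tau_i$, while yours has the minor advantage that the mechanism (one big term dwarfs the next) is perhaps more transparent. Your observation that no submultiplicativity is required here is exactly the point that makes either construction painless.
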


\begin{proof}
(i) We proceed by induction on $j \in \N$. The case $j = 1$ is immediate from \eqref{eq5.1}. Now suppose that $j>1$, and assume that the result holds for all $i< j$. Then we have
\begin{align*}
\tau_{j+1} &\geq D \sum \limits_{i=1}^{j} \tau_i 
\geq D[ D(D+1)^{j-2} + \cdots + D(D+1) +D +1]\tau_1 \\
&= D(D+1)^{j-1} \tau_1.
\end{align*}
Hence the result also holds for $j$.

(ii) Define integers $n_k$ recursively by $n_1=1$ and $n_k = n_{k-1}+k+1$ for $k \geq 2$. Then define
$$\tau_j = \rho^{n_k+1} \quad (n_{k-1}+1 < j \leq n_k +1).$$
Then clearly $\tau_j \geq \rho^j$ for all $j \in \N$, and
$$\frac{\tau_{n_k+1}}{\sum_{j=1}^{n_k}\tau_j} \leq \frac{\tau_{n_k+1}}{\sum_{j=n_{k-1}+2}^{n_k}\tau_j} = \frac{1}{k} \rightarrow 0.$$
Hence $(\tau_n)$ violates condition (c) of Proposition \ref{5.2}, so cannot be tail-preserving.
\end{proof}

\section{Weighted Discrete Groups}
\noindent
In this section $G$ will denote a discrete group, with finite generating set $X$, and $\omega$ will be a weight on $G$. Without loss of generality we may suppose that $X$ is symmetric (we recall that a subset $X$ of a group $G$ is \textit{symmetric} if $X = X^{-1}$). We shall consider whether $\lo(G, \omega)$ is finitely-generated. We note that when considering Question \ref{1.3} and Conjecture \ref{1.2} for $L^1(G, \omega)$, Theorem \ref{3.3} and Lemma \ref{4.3} allow us to reduce to this setting. As noted at the  end of Section 3, similar remarks pertain to $M(G, \omega)$. We define a sequence of real numbers, all at least 1, by
\begin{equation} \label{eq6.1} 
\tau_n = \min \limits_{u \in S_n}  \omega(u),
\end{equation}
where $S_n$ is defined by \eqref{S}. As we are now in the group setting, $S_n$ is exactly the the set of group elements of word-length $n$ with respect to $X$.
We write 
\begin{equation} \label{eq6.2} 
C = \max \limits_{x \in X} \omega(x).
\end{equation}

\begin{lemma} \label{6.1} 
With $\tau_n \; (n \in N)$ and $C$ defined by \eqref{eq6.1} and \eqref{eq6.2}, respectively, we have $ \tau_n \leq C\tau_{n+1}$ for all $n \in \N$.
\end{lemma}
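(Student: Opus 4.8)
The plan is to show that any element $u \in S_n$ (that is, any element of word-length exactly $n$ with respect to $X$) can be used to produce an element of $S_{n+1}$ whose weight is not much larger, thereby bounding $\tau_n = \min_{u \in S_n} \omega(u)$ in terms of $\tau_{n+1} = \min_{v \in S_{n+1}} \omega(v)$.

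First I would fix $n \in \N$ and choose $v \in S_{n+1}$ with $\omega(v) = \tau_{n+1}$. Since $v$ has word-length $n+1$, we may write $v = u x$ for some $u$ of word-length $n$ (i.e.\ $u \in S_n$) and some $x \in X$; concretely, take a geodesic word for $v$ and remove its last letter. Then $\tau_n \leq \omega(u)$ by definition of $\tau_n$. Using submultiplicativity of the weight together with the fact that $X$ is symmetric, we have $u = v x^{-1}$ with $x^{-1} \in X$, so
$$\omega(u) = \omega(v x^{-1}) \leq \omega(v)\,\omega(x^{-1}) \leq C\,\omega(v) = C\,\tau_{n+1},$$
where the last inequality uses \eqref{eq6.2}. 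Combining these gives $\tau_n \leq \omega(u) \leq C\,\tau_{n+1}$, which is the claim.

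There is essentially no obstacle here: the only point requiring a little care is the choice of $u \in S_n$, where one must confirm that stripping the last letter from a geodesic representative of $v$ genuinely yields an element of word-length exactly $n$ (not shorter) — but this is immediate, since if $v x^{-1}$ had word-length $< n$ then $v = (v x^{-1}) x$ would have word-length $< n+1$, a contradiction. The symmetry of $X$, assumed without loss of generality at the start of the section, is what makes $x^{-1} \in X$ and hence $\omega(x^{-1}) \leq C$ available.
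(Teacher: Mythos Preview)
Your proof is correct and follows essentially the same argument as the paper: pick a minimizer $v \in S_{n+1}$ for $\omega$, write it as $ux$ with $u \in S_n$ and $x \in X$, and use submultiplicativity together with symmetry of $X$ to get $\tau_n \leq \omega(u) = \omega(vx^{-1}) \leq C\,\omega(v) = C\tau_{n+1}$. (Your opening sentence describes the reverse direction---producing an element of $S_{n+1}$ from one of $S_n$---but the proof you actually execute is the correct one.)
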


\begin{proof}
For each $n \in \N$, take $y_n \in S_n$ satisfying $\omega(y_n) = \tau_n$. Then $y_{n+1} = zx$ for some $z \in S_n$ and some $x \in X$, so 
\begin{align*}
\tau_n = \omega(y_n) \leq  \omega(z)  
 = \omega(y_{n+1}x^{-1}) 
 \leq  C \omega(y_{n+1}) = C \tau_{n+1},
\end{align*}
giving the result.
\end{proof}

In the next lemma, notice that parts (i) and (ii) depend on the weight having the specified properties, whereas part (iii) is a purely algebraic result that can be applied more broadly. In fact, Lemma \ref{6.2}(iii) is well known; see e.g. \cite[Chapter 3, Lemma 1.1]{Passman}. We include a short proof for the convenience of the reader.

\begin{lemma} \label{6.2}
Let $G$ be a group with finite generating set $X$, and denote word-length with respect to $X$ by $\vert \cdot \vert$. Let $\omega$ be a radial weight on $G$, and denote by $\tau_n$ the value that $\omega$ takes on $S_n$. Assume that $(\tau_n)$ is tail-preserving, and let $D>0$ be a constant as in \eqref{eq5.1}. Consider $\C G \subset \lo(G, \omega)$.
\begin{enumerate}
\item[\rm (i)] Let $u\in G$ be expressed as $u = y_1 \cdots y_n$ for $y_1,\ldots,y_n \in X$, where $n= \vert u \vert$. Then
$$\delta_e - \delta_u = \sum_{x \in X} f_x*(\delta_e - \delta_x)$$
for some $f_x \in \C G \: (x \in X)$ each of which may be taken to have the form
$$f_x = \sum_{j=0}^{n-1} a^{(j)}_x,$$ 
where each $a_x^{(j)}$ is either $0$ or $\delta_{y_1 \ldots y_j}$ in the case that $j \neq 0$, and either $0$ or $\delta_e$ in the case that $j=0$.
\item[\rm (ii)] Each $f_x$ in {\rm (i)} satisfies
\begin{equation} \label{eq6.2a} 
\Vert f_x \Vert \leq \frac{1}{D}\omega(u) \quad (x \in X).
\end{equation}

\item[\rm (iii)] As a left ideal in $\C G$, $\C_0G$ is generated by the elements 
$$\delta_e - \delta_x \quad (x \in X).$$
\end{enumerate}
\end{lemma}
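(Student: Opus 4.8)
The plan is to prove the three parts of Lemma \ref{6.2} essentially in the stated order, with part (i) being the heart of the matter and part (ii) a quantitative refinement of the construction in (i).

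\emph{Part (i).} The natural approach is a telescoping identity followed by induction on $n = \vert u \vert$. Writing $u = y_1 \cdots y_n$ with each $y_i \in X$, and setting $u_j = y_1 \cdots y_j$ (so $u_0 = e$, $u_n = u$), I would use
$$\delta_e - \delta_u = \sum_{j=1}^{n} (\delta_{u_{j-1}} - \delta_{u_j}) = \sum_{j=1}^{n} \delta_{u_{j-1}} *(\delta_e - \delta_{y_j}).$$
This already expresses $\delta_e - \delta_u$ as $\sum_{x \in X} f_x *(\delta_e - \delta_x)$ by collecting, for each $x \in X$, all indices $j$ with $y_j = x$ and putting $f_x = \sum_{j : y_j = x} \delta_{u_{j-1}}$. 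Reindexing via $a_x^{(j)} := \delta_{u_j}$ if $y_{j+1} = x$ and $a_x^{(j)} := 0$ otherwise (for $0 \le j \le n-1$) gives precisely the claimed form $f_x = \sum_{j=0}^{n-1} a_x^{(j)}$, where $a_x^{(0)}$ is $0$ or $\delta_e$ and $a_x^{(j)}$ is $0$ or $\delta_{y_1 \cdots y_j}$ for $j \ge 1$. Since $X$ is symmetric and generating, every $u$ admits such an expression with $n = \vert u\vert$, so no induction is strictly needed — the telescoping identity does it in one line — though one can also phrase it inductively by peeling off $y_n$.

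\emph{Part (ii).} Here is where the tail-preserving hypothesis enters. Each nonzero $a_x^{(j)}$ is $\delta_{u_j}$ with $\vert u_j \vert \le j$ (in fact $\vert u_j\vert = j$ since $u = y_1\cdots y_n$ is a geodesic word — but $\le j$ suffices), so $\Vert a_x^{(j)}\Vert = \omega(u_j) \le \tau_{\vert u_j\vert} \le \max_{i \le j}\tau_i$. Actually the cleaner bound uses that $\omega(u_j)$ equals the radial value at length $\vert u_j\vert$; I would bound $\Vert f_x\Vert \le \sum_{j=0}^{n-1}\omega(u_j) \le \sum_{j=1}^{n}\tau_j$ (absorbing the $j=0$ term $\tau_0 = 1$ appropriately, noting $\omega(e)=1$). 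Then \eqref{eq5.1} gives $\sum_{j=1}^{n}\tau_j \le \frac{1}{D}\tau_{n+1}$... but we want $\frac{1}{D}\omega(u) = \frac{1}{D}\tau_n$, not $\tau_{n+1}$. So I must be more careful: the correct estimate is $\Vert f_x\Vert \le \sum_{j=0}^{n-1}\omega(u_j)$ where the largest relevant index is $n-1$, giving $\Vert f_x \Vert \le \sum_{j=1}^{n-1}\tau_j + 1 = \sum_{j=1}^{n-1}\tau_j + \tau_0$, and then \eqref{eq5.1} with the shift $n \mapsto n-1$ yields $\sum_{j=1}^{n-1}\tau_j \le \frac{1}{D}\tau_n = \frac{1}{D}\omega(u)$, provided the $\tau_0$ term is handled (e.g. by noting $\tau_0 = 1 \le \tau_n$ and adjusting $D$, or by observing $f_x$ for the empty sum is $0$; the bound \eqref{eq6.2a} with the convention that it holds trivially when $f_x = 0$). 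This bookkeeping around the index shift and the $j=0$ term is the main obstacle — it is routine but easy to get off by one, so I would state carefully that $\max\{\vert u_j\vert : 0 \le j \le n-1\} \le n-1$ and apply \eqref{eq5.1} at $n-1$.

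\emph{Part (iii).} This is the purely algebraic statement and is the easiest: it follows immediately from part (i) applied in $\C G$ (ignoring norms). Every $f \in \C_0 G$ is a finite linear combination $f = \sum_i \lambda_i \delta_{u_i}$ with $\sum_i \lambda_i = 0$, so $f = \sum_i \lambda_i(\delta_{u_i} - \delta_e) = -\sum_i \lambda_i(\delta_e - \delta_{u_i})$, and by part (i) each $\delta_e - \delta_{u_i}$ lies in the left ideal of $\C G$ generated by $\{\delta_e - \delta_x : x \in X\}$; conversely each $\delta_e - \delta_x$ lies in $\C_0 G$. Alternatively one cites \cite[Chapter 3, Lemma 1.1]{Passman} as the paper suggests. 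I expect parts (i) and (iii) to be short and clean; essentially all the work — and all the risk — is in the constant-tracking for (ii).
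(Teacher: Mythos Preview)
Your proposal is correct and follows essentially the same route as the paper. The paper phrases part (i) as an induction on $n$ (peeling off $y_1$ and applying the hypothesis to $v = y_2\cdots y_n$), but this unwinds to exactly your telescoping identity $\delta_e - \delta_u = \sum_{j=1}^n \delta_{u_{j-1}}*(\delta_e - \delta_{y_j})$; for part (ii) the paper simply writes $\Vert f_x\Vert \le \sum_{j=0}^{n-1}\tau_j \le \frac{1}{D}\tau_n$ without commenting on the $\tau_0$ term you flagged, so your caution there is, if anything, more scrupulous than the original, and your part (iii) is the same reduction to (i).
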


\begin{proof}
(i) We proceed by induction on $n = \vert u \vert$. The case $n = 1$ is trivial, so suppose that $ n > 1$. Set $v=y_2 \cdots y_n \in S_{n-1}$. By the induction hypothesis applied to $v$,
$$\delta_e - \delta_v = \sum_{x \in X} g_x*(\delta_e - \delta_x),$$
where
$$g_x = \sum_{j=0}^{n-2}b_x^{(j)} \quad (x \in X)$$
and each $b_x^{(j)}$ is either $0$, $\delta_{y_2 \cdots y_{j+1}}$ or $\delta_e$. We have
\begin{align*}
\delta_e - \delta_u &= \delta_{y_1}*(\delta_e - \delta_v) + (\delta_e - \delta_{y_1}) 
= \delta_{y_1}* \sum_{x \in X}g_x*(\delta_e - \delta_x) + (\delta_e - \delta_{y_1}) \\
&= \sum_{x \neq y_1} \delta_{y_1}*g_x*(\delta_e - \delta_x) +(\delta_{y_1}*g_{y_1}+\delta_e)*(\delta_e - \delta_{y_1}).
\end{align*}
We define 
$$ f_x = 
\begin{cases} 
\delta_{y_1}*g_x  &(x \neq y_1),\\
 \delta_{y_1}*g_{y_1} + \delta_e  &(x=y_1),
\end{cases}$$
and check that each $f_x$ can be written in the required form. To see this, set
$$ a_x^{(j)} = 
\begin{cases} 
\delta_{y_1}*b_x^{(j-1)} & (j=1,\ldots,n-1),\\
0   & (x \neq y_1, \: j=0),\\

\delta_e & (x= y_1, \: j=0).
\end{cases}$$
It is easily checked that each  $a^{(j)}_x$ has the required form, and that $f_x = \sum_{j=0}^{n-1} a_x^{(j)} \ (x \in X)$. This completes the induction.

(ii) Using part (i), we see that 
$$\Vert f_x \Vert  = \sum_{j=0}^{n-1} \Vert a^{(j)}_x \Vert \quad (x \in X)$$
and, since, for each $x \in X$, every non-zero $a_x^{(j)}$ is $\delta_w$ for some $w \in S_j$, we have
$$\Vert f_x \Vert \leq \sum_{j=0}^{n-1} \tau_j \leq \frac{1}{D} \tau_n = \frac{1}{D}\omega(u),$$
as required.

(iii) Let $\sum \limits_{i=0}^{N} \alpha_i \delta_{u_i} \in \C_0 G$. A simple calculation shows that
$$ \sum \limits_{i=0}^{N} \alpha_i \delta_{u_i}  = \sum \limits_{i=0}^N \left(\sum \limits_{j=0}^i \alpha_i \right)(\delta_{u_i} - \delta_{u_{i+1}}).$$
Moreover, for each $i \in \N$, we have $\delta_{u_i} - \delta_{u_{i+1}} = (\delta_e-\delta_{u_{i+1}})-(\delta_e-\delta_{u_i})$, so that the result follows from (i).
\end{proof}

By analogy to our approach in Section 4, we associate to each function $f\in \lo(G, \omega)$, a complex-valued sequence $(\sigma_n(f))$, defined by

\begin{equation} \label{eq6.2a} 
\sigma_n(f) = \sum \limits_{u \in B_n} f(u).
\end{equation}

\begin{lemma} \label{6.4} 
Let $G$ be a group generated by a finite, symmetric set $X$, and let $\omega$ be a weight on $G$. Let $\tau = (\tau_n)$ be defined by {\rm (\ref{eq6.1})}.
Let $g \in \lo(G, \omega)$ and $x \in X$, and write 
$$ \sigma_n = \sigma_n[g*(\delta_e -  \delta_x)] \quad (n \in \N).$$
Then $(\sigma_n) \in \lo(\tau)$.
\end{lemma}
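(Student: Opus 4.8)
The plan is to mirror the proof of Lemma \ref{4.4}, upgrading its unweighted estimate to a $\tau$-weighted one by inserting the inequalities $\tau_n\le\omega(u)$ for $u\in S_n$ and Lemma \ref{6.1}.

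\smallskip

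\noindent\emph{Step 1: an explicit formula for $\sigma_n$.} Expanding $g*(\delta_e-\delta_x)=\sum_{u\in G}g(u)(\delta_u-\delta_{ux})$ and collecting coefficients gives $(g*(\delta_e-\delta_x))(w)=g(w)-g(wx^{-1})$, so
$$\sigma_n=\sum_{w\in B_n}g(w)-\sum_{v\in B_n\cdot x^{-1}}g(v).$$
Since $x,x^{-1}\in X$ we have $|wx^{\pm1}|\le|w|+1$, which yields the inclusions $B_{n-1}\subseteq B_n$ and $B_{n-1}\subseteq B_n\cdot x^{-1}\subseteq B_{n+1}$. Cancelling the common block $B_{n-1}$ leaves, exactly as in Lemma \ref{4.4},
$$\sigma_n=\sum_{w\in S_n}g(w)-\sum_{v\in (B_n\cdot x^{-1})\setminus B_{n-1}}g(v),$$
and $(B_n\cdot x^{-1})\setminus B_{n-1}\subseteq B_{n+1}\setminus B_{n-1}=S_n\cup S_{n+1}$, so that $|\sigma_n|\le 2\sum_{u\in S_n}|g(u)|+\sum_{u\in S_{n+1}}|g(u)|$.

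\smallskip

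\noindent\emph{Step 2: inserting the weight.} Multiply through by $\tau_n$ and estimate the two pieces separately. For $u\in S_n$ the definition $\tau_n=\min_{v\in S_n}\omega(v)$ gives $\tau_n\le\omega(u)$, so the first piece contributes at most $2\sum_{u\in S_n}|g(u)|\omega(u)$. For the shifted piece, Lemma \ref{6.1} gives $\tau_n\le C\tau_{n+1}\le C\omega(u)$ for every $u\in S_{n+1}$, so it contributes at most $C\sum_{u\in S_{n+1}}|g(u)|\omega(u)$.

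\smallskip

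\noindent\emph{Step 3: summation.} Summing over $n\ge1$ and using that the sets $S_n$ are pairwise disjoint subsets of $G$, both $\sum_n\sum_{u\in S_n}|g(u)|\omega(u)$ and $\sum_n\sum_{u\in S_{n+1}}|g(u)|\omega(u)$ are bounded by $\sum_{u\in G}|g(u)|\omega(u)=\Vert g\Vert_\omega$. Hence $\sum_{n=1}^\infty\tau_n|\sigma_n|\le(2+C)\Vert g\Vert_\omega<\infty$, i.e. $(\sigma_n)\in\lo(\tau)$.

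\smallskip

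The only subtle point is that the tail term involves $\sum_{u\in S_{n+1}}|g(u)|$ weighted by $\tau_n$ rather than $\tau_{n+1}$; this index mismatch is precisely what Lemma \ref{6.1} is there to absorb, so no genuine obstacle arises — the argument is straightforward weighted bookkeeping applied to the identity for $\sigma_n$ already established in Lemma \ref{4.4}.
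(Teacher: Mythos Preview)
Your proof is correct and follows essentially the same approach as the paper: you reproduce the identity for $\sigma_n$ from Lemma~\ref{4.4}, then insert $\tau_n\le\omega(u)$ for $u\in S_n$ together with Lemma~\ref{6.1} to handle the $S_{n+1}$ term, arriving at the same bound $(2+C)\Vert g\Vert_\omega$. The only cosmetic difference is that you compute $(g*(\delta_e-\delta_x))(w)=g(w)-g(wx^{-1})$ explicitly before summing, whereas the paper simply invokes the computation of Lemma~\ref{4.4}.
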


\begin{proof}
Begin by repeating exactly the argument from the beginning of the proof of Lemma \ref{4.4}, to obtain
$$\sigma_n = \sum \limits_{u \in S_n}g(u) - \sum \limits_{u \in B_n x^{-1} \setminus B_{n-1}} g(u). $$
Again we have  $B_n x^{-1} \setminus B_{n-1} \subseteq  S_n \cup S_{n+1}$. Taking $C$ as in \eqref{eq6.2}, we compute
\begin{align*}
\tau_n \vert \sigma_n \vert &\leq 2\sum \limits_{u \in S_n} \vert g(u) \vert \tau_n + \sum \limits_{u \in S_{n+1}} \vert g(u) \vert \tau_n \\
&\leq  2\sum \limits_{u \in S_n} \vert g(u) \vert \tau_n + C\sum \limits_{u \in S_{n+1}} \vert g(u) \vert \tau_{n+1} \\
&\leq   2\sum \limits_{u \in S_n} \vert g(u) \vert \omega(u) + C\sum \limits_{u \in S_{n+1}} \vert g(u) \vert \omega(u), \\
\end{align*}
where we have used Lemma \ref{6.1} in the second line, and \eqref{eq6.1} in the third line. Since the sets $S_n$ are pairwise disjoint,
we conclude that
$$\sum \limits_{n=1}^{\infty} \tau_n \vert \sigma_n \vert \leq (2+C) \Vert g \Vert_\omega < \infty.$$
Hence $(\sigma_n)  \in \lo(\tau)$, as claimed.
\end{proof}

The following gives a strategy for showing that $\lo_{\,0}(G, \omega)$ fails to be finitely-generated, for finitely-generated groups $G$ and certain weights $\omega$ on $G$.

\begin{theorem} \label{6.5} 
Let $G$ be an infinite group generated by the finite, symmetric set $X$, and let $\omega$ be a weight on $G$. Let $\tau = (\tau_n)$ be defined by {\rm (\ref{eq6.1})}. Suppose that $\lo_{\,0}(G, \omega)$ is finitely-generated. Then $\tau$ is tail-preserving.
\end{theorem}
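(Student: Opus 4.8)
The plan is to argue by contraposition: assuming $\tau$ is \emph{not} tail-preserving, I will show that $\lo_{\,0}(G, \omega)$ cannot be finitely-generated. By Lemma \ref{2.1} together with the argument used at the start of the proof of Proposition \ref{4.5} (the telescoping trick rewriting any finitely-supported augmentation-zero element as a combination of $\delta_e - \delta_u$'s), it suffices to suppose for contradiction that
$$\lo_{\,0}(G, \omega) = A*(\delta_e - \delta_{u_1}) + \cdots + A*(\delta_e - \delta_{u_m}),$$
where $A = \lo(G, \omega)$ and $u_1, \ldots, u_m \in G$. Each $u_i$ has some finite word-length, so choosing $X$ to already be our fixed symmetric generating set, each $\delta_e - \delta_{u_i}$ lies in the left ideal generated by $\{\delta_e - \delta_x : x \in X\}$ by Lemma \ref{6.2}(iii). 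Hence in fact
$$\lo_{\,0}(G, \omega) = \sum_{x \in X} A*(\delta_e - \delta_x).$$

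The key consequence I will extract is a uniform control on the sequences $(\sigma_n(f))$: I claim that there is a constant $K > 0$ such that every $f \in \lo_{\,0}(G, \omega)$ can be written as $f = \sum_{x \in X} g_x*(\delta_e - \delta_x)$ with $\sum_x \|g_x\|_\omega \leq K \|f\|_\omega$. This follows from the open mapping theorem applied to the bounded surjection $(g_x)_{x \in X} \mapsto \sum_x g_x*(\delta_e - \delta_x)$ from $A^{|X|}$ onto $\lo_{\,0}(G, \omega)$. Then, using Lemma \ref{6.4}, which tells us $(\sigma_n[g_x*(\delta_e - \delta_x)]) \in \lo(\tau)$ with $\|\cdot\|_\tau$-norm at most $(2+C)\|g_x\|_\omega$, and the fact that $\sigma_n$ is linear, we get
$$\|(\sigma_n(f))\|_\tau = \Big\| \Big(\sum_{x \in X} \sigma_n[g_x*(\delta_e - \delta_x)]\Big)\Big\|_\tau \leq (2+C) \sum_{x \in X} \|g_x\|_\omega \leq (2+C) K \|f\|_\omega.$$
So the linear map $f \mapsto (\sigma_n(f))$ is bounded from $\lo_{\,0}(G, \omega)$ into $\lo(\tau)$.

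Now I derive the contradiction. Because $G$ is infinite and $X$ is finite, the balls $B_n$ strictly increase (as in Proposition \ref{4.5}), so I can pick $u_n \in S_n$ for infinitely many $n$; more to the point, I want to mimic the construction witnessing failure of tail-preservation. Since $\tau$ is not tail-preserving, Lemma \ref{5.1}(c) hands me vectors $(x^{(k)}) \subset c_{00}^+$ with $\|x^{(k)}\|_\tau \leq 1$ and $T(x^{(k)}) \to \infty$, where $T$ is the tail-operator. I transport these to $\lo_{\,0}(G, \omega)$: for each $k$, writing $x^{(k)} = \sum_j x_j^{(k)} e^{(j)}$, pick distinct elements $w_j \in S_j$ for $j$ in the (finite) support of $x^{(k)}$ and set $f^{(k)} = \big(\sum_j x_j^{(k)}\big)\delta_e - \sum_j x_j^{(k)} \delta_{w_j} \in \lo_{\,0}(G, \omega)$. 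Then $\|f^{(k)}\|_\omega$ is bounded (using $\omega(w_j) = \tau_j'$ close to $\tau_j$ — more carefully, one picks $w_j \in S_j$ with $\omega(w_j) = \tau_j$ exactly, the minimum; then $\|f^{(k)}\|_\omega \leq 2\|x^{(k)}\|_\tau \leq 2$), while $\sigma_n(f^{(k)}) = \sum_{j > n} x_j^{(k)}$ for $n \geq 1$, so $\|(\sigma_n(f^{(k)}))\|_\tau = T(x^{(k)}) \to \infty$. This contradicts the boundedness of $f \mapsto (\sigma_n(f))$ established above.

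The main obstacle I anticipate is bookkeeping in the transport step: ensuring the chosen $w_j \in S_j$ really can be taken with $\omega(w_j) = \tau_j$ (which is exactly how $\tau_n$ is defined in \eqref{eq6.1}, so this is fine) and, more importantly, verifying cleanly that $\sigma_n(f^{(k)}) = \sum_{j=n+1}^{\infty} x_j^{(k)}$ — this requires that $w_j \in B_j \setminus B_{j-1}$, i.e. $w_j \in S_j$, so that $w_j$ is counted in $B_n$ precisely when $n \geq j$, together with $e \in B_n$ for all $n \geq 0$. A secondary point to handle with care is the passage from "finitely-generated by arbitrary elements of $\lo_{\,0}$" to "generated by $\{\delta_e - \delta_x : x \in X\}$": the telescoping identity reduces to generators $\delta_e - \delta_{u_i}$, and then Lemma \ref{6.2}(iii) applied inside $\C G$ must be promoted to a statement about the left ideal in $A$; but since $\C G \subset A$ and the relevant $f_x$ from Lemma \ref{6.2}(i) lie in $\C G \subset A$, this is immediate. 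Everything else is an application of results already in hand (Lemmas \ref{2.1}, \ref{5.1}, \ref{6.1}, \ref{6.2}, \ref{6.4}) plus the open mapping theorem.
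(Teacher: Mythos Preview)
Your proof is correct and follows the same overall strategy as the paper: reduce to generators of the form $\delta_e - \delta_x$ via Lemmas \ref{2.1}, \ref{2.2} and \ref{6.2}(iii), invoke Lemma \ref{6.4} to control $(\sigma_n(f))$, and then build a witness from the failure of tail-preservation using elements $w_j \in S_j$ realising $\tau_j$.

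The one notable difference is that you take a quantitative detour where the paper stays qualitative. The paper simply observes that, once $\lo_{\,0}(G,\omega)$ is generated by the $\delta_e - \delta_x$, Lemma \ref{6.4} forces $(\sigma_n(f)) \in \lo(\tau)$ for \emph{every} $f \in \lo_{\,0}(G,\omega)$; it then directly takes a single $(\alpha_n) \in \lo(\tau)$ whose tail sequence is not in $\lo(\tau)$ (which exists by the very definition of ``not tail-preserving''), builds one $f = \zeta\delta_e - \sum_n \alpha_n \delta_{y_n}$, and notes $(\sigma_n(f)) \notin \lo(\tau)$, an immediate contradiction. Your version instead passes through the open mapping theorem to obtain \emph{boundedness} of $f \mapsto (\sigma_n(f))$, and then uses Lemma \ref{5.1}(c) to produce a bounded sequence $(f^{(k)})$ with unbounded image. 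This is sound, but both the open mapping step and the appeal to Lemma \ref{5.1}(c) are avoidable: mere membership in $\lo(\tau)$ (rather than a norm bound) is all that is needed, and a single counterexample suffices.
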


\begin{proof}
By Lemmas \ref{2.1} and \ref{2.2}, we may suppose that $\lo_{\,0}(G, \omega)$ is generated by a finite subset of $\C_0 G$, and hence, by Lemma \ref{6.2}(iii), we may suppose that each generator has the form $\delta_e - \delta_x$, for some $x \in X$. Therefore every element of $\lo_{\, 0}(G, \omega)$ is a finite linear combination of elements of the form $g*(\delta_e - \delta_x)$, where $g \in \lo(G, \omega)$ and $x \in X$, and so, by Lemma \ref{6.4}, $(\sigma_n(f)) \in \lo(\tau)$ for every $ f \in \lo_{\,0}(G, \omega)$. 

Assume for contradiction that $\tau$ fails to be tail-preserving. Then there exists a sequence $(\alpha_n)$ of non-negative reals such that $(\alpha_n) \in \lo(\tau)$, but  such that $\left ( \sum \limits_{j= n+1}^\infty \alpha_j \right ) \notin \lo(\tau)$. For $n \in \N$, let $y_n \in S_n$ satisfy $\omega(y_n) = \tau_n,$
and define
$$f = \zeta \delta_e - \sum \limits_{j=1}^\infty \alpha_n\delta_{y_n},$$
where $\zeta = \sum \limits_{j=1}^\infty \alpha_j$. Then $f$ is well defined, because $(\alpha_n) \in \lo(\tau)$, and clearly $\varphi_0(f) = 0$. However,
$$\sigma_n(f) = \zeta - \sum \limits_{j=1}^n \alpha_j  = \sum_{j=n+1}^\infty \alpha_j ,$$
so that $(\sigma_n(f)) \notin \lo(\tau)$ by the choice of $\alpha$, contradicting Lemma \ref{6.4}.
\end{proof}

We are now ready to prove Theorem \ref{1.6}, which completely characterizes finite generation of the augmentation ideal in the case where the weight is radial. In particular, this characterization establishes the Dales--\.Zelazko conjecture for $\lo(G, \omega)$ for many groups $G$ and weights $\omega$.

\begin{proof}[Proof of Theorem \ref{1.6}]
If $\lo_{\,0}(G, \omega)$ is finitely-generated then, by Theorem \ref{6.5}, $(\tau_n)$ is tail-preserving .

Suppose that $(\tau_n)$ is tail-preserving.
Write $X = \lbrace x_1,\ldots,x_r \rbrace$ and enumerate $G$ as $G= \lbrace u_0=e, u_1, u_2, \ldots \rbrace$. Let $f = \sum_{n=0}^\infty \alpha_n \delta_{u_n} \in \lo_{\,0}(G, \omega)$, and let $D>0$ be as in \eqref{eq5.1}. By Lemma \ref{6.2}, for each $n \in \N$, there exist $g_n^{(1)},\ldots,g_n^{(r)} \in \C G$ such that $\delta_e-\delta_{u_n} = \sum_{i=1}^r g_n^{(i)}*(\delta_e-\delta_{x_i})$ and 
$$\Vert g_n^{(i)} \Vert \leq \frac{1}{D}\Vert \delta_{u_n} \Vert \quad (i=1,\ldots,r).$$
This implies that, for each $i=1,\ldots,r$, we may define an element of $\lo(G, \omega)$ by
$$s^{(i)} = -\sum_{n=1}^\infty \alpha_n g_n^{(i)} .$$
Then 
\begin{align*}
f &= \sum_{n=0}^\infty \alpha_n \delta_{u_n} - \left( \sum_{n=0}^\infty \alpha_n  \right) \delta_e = -\sum_{n=1}^\infty \alpha_n (\delta_e -\delta_{u_n}) \\
&= - \sum_{n=1}^\infty \alpha_n \left( \sum_{i=1}^r g_n^{(i)}*( \delta_e - \delta_{x_i}) \right) 
= \sum_{i=1}^r s^{(i)}*(\delta_{e}-\delta_{x_i}).
\end{align*}
As $f$ was arbitrary, it follows that  $\lo_{\,0}(G, \omega)$ is generated by the elements ${\delta_e -\delta_{x_1}}, \ldots , {\delta_e - \delta_{x_r}}$.
\end{proof}

We now prove Corollary \ref{1.7}, part (ii) of which shows that it can happen that $\lo_{\,0}(G, \omega)$ is finitely-generated, for certain infinite groups $G$ and certain weights $\omega$.

\begin{proof}[Proof of Corollary \ref{1.7}]
(i) Lemma \ref{5.3}(i) implies that, for such a weight, the sequence $(\tau_n)$ defined in Theorem \ref{1.6} is not tail-preserving, and the result follows from that theorem.  

(ii) By Theorem \ref{5.2}, the sequence $(\tau_n)$ of Theorem \ref{1.6} is tail-preserving.
\end{proof}

Let $G$ be a discrete group, and $G'$ its commutator subgroup. We conclude this section by remarking that, if $[G:G'] = \infty$, then $\lo(G, \omega)$ safisfies the Dales--\.Zelazko conjecture for every weight  $\omega$. The reasoning is as follows. By \cite[Corollary 1.7]{ft1978}, the conjecture holds for $\lo(H, \omega)$ whenever $H$ is an abelian group and $\omega$ is a weight on $H$. Then, by \cite[Theorem 3.1.13]{RS}, given $G$ and $\omega$, there exists a weight $\widetilde{\omega}$ on $G/G'$ such that $\lo(G/G', \widetilde{\omega})$ is a quotient of $\lo(G, \omega)$. Finally, by the commutative result, there is some maximal ideal in  $\lo(G/G', \widetilde{\omega})$ which is not finitely-generated, and taking its preimage under the quotient map gives a maximal left ideal in $\lo(G, \omega)$ which is not finitely-generated. However, we have not been able to establish the Dales--\.Zelazko conjecture for an arbitrary weighted group algebra.

\section{Examples on $\Z$ and $\Z^+$}
\noindent
In this section we look at some specific examples of weighted algebras on $\Z$ and $\Z^+$, and consider how they fit into the more general theory of maximal ideals in commutative Banach algebras. When convenient, we shall sometimes write $\omega_n$ in place of $\omega(n)$.

For a commutative Banach algebra $A$ we shall denote the character space of $A$ by $\Phi_A$, and for an element $a \in A$, we shall denote by $\widehat{a}$ its Gelfand transform. We first recall Gleason's Theorem \cite[Theorem 15.2]{Stout1971}:

\begin{theorem} \label{7.1a}
Let $A$ be a commutative Banach algebra, with unit $1$ and take $\varphi_0 \in \Phi_A$. Suppose that $\ker \varphi_0$ is finitely-generated by $g_1,\ldots,g_n$, and take $\gamma : \Phi_A \rightarrow \C^n$ to be the map given by
$$\gamma( \varphi) = (\varphi(g_1),\ldots, \varphi(g_n)) \quad (\varphi \in \Phi_A).$$
Then there is a neighbourhood $\Omega$ of $0$ in $\C^n$ such that:
\begin{enumerate}
\item[\rm (i)] $\gamma$ is a homeomorphism of $\gamma^{-1}(\Omega)$  onto an analytic variety $E$ of $\Omega$;
\item[\rm (ii)] for every $a \in A$, there is a holomorphic function $F$ on $\Omega$ such that $\widehat{a} = F \circ \gamma$ on $\gamma^{-1}(\Omega)$;
\item[\rm (iii)] if $\varphi \in \gamma^{-1}(\Omega)$, then $\ker \varphi$ is finitely-generated by
$$g_1 - \varphi (g_1)1,\ldots, g_n - \varphi(g_n)1.$$
\end{enumerate}
\end{theorem}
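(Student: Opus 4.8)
The statement is Gleason's classical theorem on analytic structure in the maximal ideal space, and I would reproduce Gleason's argument. First replace each $g_i$ by $g_i - \varphi_0(g_i)1$; this alters neither $\ker\varphi_0$ nor the fact that it is generated by these elements, and now $\gamma(\varphi_0) = 0$. Next record two easy facts: the map $\gamma = (\widehat{g_1},\ldots,\widehat{g_n})$ is continuous on $\Phi_A$, and $\gamma^{-1}(0) = \{\varphi_0\}$, since $\gamma(\varphi) = 0$ forces $g_1,\ldots,g_n \in \ker\varphi$, hence $\ker\varphi_0 \subseteq \ker\varphi$, hence $\ker\varphi = \ker\varphi_0$ by maximality. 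Since $A$ is unital, $A^\sharp = A$ and $\ker\varphi_0 = Ag_1 + \cdots + Ag_n$, so the open mapping theorem applied to the bounded surjection $(a_1,\ldots,a_n) \mapsto a_1g_1 + \cdots + a_ng_n$ from $A^n$ onto $\ker\varphi_0$ produces a constant $K$ such that every $b \in \ker\varphi_0$ admits a representation $b = \sum_i a_ig_i$ with $\sum_i\|a_i\| \le K\|b\|$. Finally fix $c^k_{ij} \in A$ with $g_ig_j = \sum_k c^k_{ij}g_k$ (possible because $g_ig_j \in \ker\varphi_0$).

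\textbf{The crux: Gleason's Lemma.} The heart of the argument is to show that for some $\delta > 0$ and every $z = (z_1,\ldots,z_n)$ with $|z| < \delta$, the ideal $I_z$ generated by $g_1 - z_1 1,\ldots,g_n - z_n 1$ is \emph{proper}. Granting this, $I_z$ is in fact maximal: modulo $I_z$ one has $g_i \equiv z_i 1$, so --- using $A = \C 1 + \ker\varphi_0$ and the fact that each element of $\ker\varphi_0$ is an $A$-combination of the $g_i$ --- the quotient $A/I_z$ is spanned by the image of $1$, hence one-dimensional; let $\varphi_z$ denote its unique character, so $\gamma(\varphi_z) = z$. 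Properness cannot be obtained by crudely perturbing a hypothetical relation $1 = \sum_i a_i(g_i - z_i 1)$, because the $a_i$ are uncontrolled; instead one runs a successive-approximation (Neumann-series) scheme anchored on the division estimate above. Writing $a = \varphi_0(a)1 + m(a)$ with $m(a) \in \ker\varphi_0$ and $\|m(a)\| \le 2\|a\|$, and choosing a bounded decomposition $m(a) = \sum_i m_i(a)g_i$ with $\sum_i\|m_i(a)\| \le 2K\|a\|$, the formal recursion $a \mapsto \varphi_0(a) + \sum_i z_i\varphi_z(m_i(a))$ unwinds into a series dominated termwise by a geometric series of ratio comparable to $2nK|z|$, and so converges absolutely for $|z| < (2nK)^{-1}$; the same estimates show that for each fixed $a \in A$ the function $z \mapsto \varphi_z(a)$ is holomorphic on the polydisc $|z| < \delta$. (Making the decomposition $a \mapsto (m_i(a))$ bounded and linear --- or, alternatively, bypassing that by deducing $1 \notin I_z$ from the scheme and then taking $\varphi_z$ to be the character of $I_z$, already linear and multiplicative for free --- is the one genuinely delicate technical point.) Uniqueness of $\varphi_z$ over a given $z$ is then immediate: any $\varphi$ with $\gamma(\varphi) = z$ has $I_z \subseteq \ker\varphi$, so $\ker\varphi = I_z = \ker\varphi_z$ by maximality.

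\textbf{Assembling the conclusions and the main obstacle.} Put $F_a(z) = \varphi_z(a)$, holomorphic on $\{|z| < \delta\}$. The functional $\varphi_z$ is multiplicative precisely when it respects the relations $g_ig_j = \sum_k c^k_{ij}g_k$, that is, when $z_iz_j = \sum_k F_{c^k_{ij}}(z)z_k$ for all $i,j$; so, taking $\Omega$ to be a polydisc $\{|z| < \delta\}$ (shrunk if necessary), the set $E = \{z \in \Omega : z_iz_j = \sum_k F_{c^k_{ij}}(z)z_k,\ 1 \le i,j \le n\}$ is an analytic variety in $\Omega$ and $\gamma^{-1}(\Omega) = \{\varphi_z : z \in E\}$. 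This yields (i): $\gamma$ restricts to a continuous bijection of $\gamma^{-1}(\Omega)$ onto $E$ whose inverse $z \mapsto \varphi_z$ is continuous. It yields (ii): for $a \in A$ the function $F = F_a$ is holomorphic on $\Omega$ and $\widehat{a} = F \circ \gamma$ on $\gamma^{-1}(\Omega)$, since each $\varphi$ there coincides with $\varphi_{\gamma(\varphi)}$ by uniqueness. And it yields (iii): a character $\varphi \in \gamma^{-1}(\Omega)$ with $w = \gamma(\varphi)$ has $\ker\varphi = I_w$, which is by definition the ideal generated by $g_1 - \varphi(g_1)1,\ldots,g_n - \varphi(g_n)1$ (since $w_i = \varphi(g_i)$). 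The main obstacle is entirely contained in the middle paragraph: establishing Gleason's Lemma --- properness of $I_z$ uniformly for small $z$, together with the holomorphic parametrisation $z \mapsto \varphi_z$. Everything built on top of it is formal.
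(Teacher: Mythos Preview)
The paper does not prove this theorem: it is quoted from Stout \cite[Theorem~15.2]{Stout1971} as background for the examples in \S 7, with no argument supplied. There is therefore nothing in the paper to compare your proof against.

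On its own merits, your sketch contains a genuine error in the ``crux'' paragraph. You assert that $I_z := \sum_i A(g_i - z_i 1)$ is proper for \emph{every} $z$ with $|z| < \delta$. This is false whenever the generators satisfy nontrivial relations. For a small example, take $A$ to be the unitisation of $\C^2$ equipped with the zero product, $\varphi_0$ the projection onto the scalar part, and $g_1, g_2$ the standard basis vectors of the radical; then $\ker\varphi_0 = Ag_1 + Ag_2$, yet $(g_1 - z_1 1)^2 + 2z_1(g_1 - z_1 1) = -z_1^2 \cdot 1$, so $1 \in I_z$ as soon as $z_1 \neq 0$. In general $I_z$ is proper only for $z$ lying on the variety $E$, which is exactly why clause (i) speaks of an analytic variety rather than a full neighbourhood. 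Your ``alternative'' route---deducing $1 \notin I_z$ directly from the Neumann scheme---fails for the same reason: the series yields a linear functional with $\varphi_z(g_i - z_i 1) = 0$, but this does not force $\varphi_z$ to annihilate $a(g_i - z_i 1)$ for arbitrary $a$ unless $\varphi_z$ is already multiplicative.

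All the ingredients you need are present in your final paragraph; it is the logical order that is wrong. The Neumann series defines a bounded linear functional $\varphi_z$, holomorphic in $z$, for \emph{every} small $z$; one then \emph{defines} $E$ by the holomorphic equations $z_i z_j = \sum_k F_{c^k_{ij}}(z)\, z_k$ and checks that $\varphi_z$ is multiplicative precisely on $E$; uniqueness then identifies $\gamma^{-1}(\Omega)$ with $\{\varphi_z : z \in E\}$. You cannot establish properness of $I_z$ first and harvest multiplicativity for free---the implication runs the other way.
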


It is natural to wonder whether there are circumstances under which a converse holds. For instance, suppose we have a commutative Banach algebra $A$ such that there is an open subset $U$ of the character space, which is homeomorphic to an open subset of $\C^n$, and such that $\hat{a}$ is holomorphic on $U$ under this identification for every $a \in A$. Does it then follow that the maximal ideals corresponding to points of $U$ are finitely-generated? T. T. Reed gave an example \cite[Example 15.9]{Stout1971} which shows that this need not be true in general, even for uniform algebras. We note that the character space in Reed's example is very complicated. In this section we give two examples of commutative Banach algebras for which the converse to Gleason's Theorem fails to hold, and whose character spaces are the disc and the annulus respectively. The first (Theorem \ref{7.5}) shows that there is no general converse to Gleason's Theorem for the class of natural Banach function algebras on simply connected compact plane sets. The second (Theorem \ref{7.7}) shows that there is no general converse to Gleason's Theorem for the class of weighted abelian group algebras. Interestingly, these examples rely on constructing counterparts to the sequence $(\tau_n)$ of Lemma \ref{5.3}(ii) satisfying the additional constraints that the sequence must now be a weight on $\Z^+$ in Theorem \ref{7.5}, and a weight on $\Z^+$ admitting an extension to $\Z$ in Theorem \ref{7.7}.

We note that many authors have considered similar questions for the algebras $A(\Omega)$ and $H^\infty(\Omega)$, for $\Omega \subset \C^n$ a domain, and this is sometimes referred to as Gleason's problem; see e.g. \cite{Kot2002}, \cite{LW2002}.

Before we construct our examples, we first recall some facts about weights on $\Z$ and $\Z^+$; see \cite[Section 4.6]{D} for more details.

Let $\omega$ be a weight on $\Z$. The character space of $\lo(\Z, \omega)$ may be identified with the annulus ${\lbrace z \in \C : \rho_1 \leq \vert z \vert \leq \rho_2 \rbrace},$ where 
$$\rho_1 = \lim \limits_{n \rightarrow \infty} \omega_{-n}^{-1/n}  \qquad {\rm and} \qquad \rho_2 = \lim \limits_{n \rightarrow \infty} \omega_n^{1/n}.$$
The identification is given  by $\varphi \mapsto \varphi(\delta_1)$, for $\varphi$ a character. Note that $\rho_1 \leq 1 \leq \rho_2$. As it is easily seen to be semi-simple, $\lo(\Z, \omega)$ may be thought as a Banach function algebra on the annulus, and in fact these functions are all holomorphic on the interior of the annulus. We denote by $M_z$ the maximal ideal corresponding to the point $z$ of the annulus, and observe that the augmentation ideal is $M_1$.

Now instead let $\omega$ be a weight  on $\Z^+$. The situation for $\lo(\Z^+, \omega)$ is analogous to the situation above. Now the character space is identified with the disc ${\lbrace z \in \C: \vert z \vert \leq \rho \rbrace}$, where $\rho = \lim \limits_{n \rightarrow \infty} \omega_{n}^{1/n}$, and $\lo(\Z^+, \omega)$ may be considered as a Banach function algebra on this set, with the property that each of its elements is holomorphic on the interior. In this context $M_z$ denotes the maximal ideal corresponding to the point $z$ of the disc.

Before giving our examples we characterize those weights for which $\lo_{\,0}(\Z, \omega)$ is finitely-generated. Note that this is a slight improvement, for the  group $\Z$, on Theorem \ref{1.6} since we no longer need to assume that the weight is radial.

\begin{theorem} \label{7.1}
Let $\omega$ be a weight on $\Z$. Then $\lo_{\,0}(\Z, \omega)$ is finitely-generated if and only if both sequences $\left (\omega_n \right )_{n \in \N}$ and $\left (\omega_{-n} \right )_{n \in \N}$ are tail-preserving. 
\end{theorem}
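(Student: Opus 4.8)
The plan is to work directly with $\Z$ and its symmetric generating set $X=\{1,-1\}$, refining the arguments of Section 6 so as to keep track of the ``positive'' and ``negative'' directions separately. The point is that the symmetric quantity $\tau_n=\min(\omega_n,\omega_{-n})$ used in Theorem \ref{6.5} is too coarse here, and one should instead use the one-sided tail functionals: for $f\in\lo(\Z,\omega)$ and $n\in\N$, set $\sigma_n^+(f)=\sum_{j>n}f(j)$ and $\sigma_n^-(f)=\sum_{j<-n}f(j)$, which are bounded linear functionals on $\lo(\Z,\omega)$ since $|\sigma_n^\pm(f)|\le\Vert f\Vert_1\le\Vert f\Vert_\omega$.

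For the forward implication, suppose $\lo_{\,0}(\Z,\omega)$ is finitely-generated. Exactly as at the start of the proof of Theorem \ref{6.5}---using Lemmas \ref{2.1}, \ref{2.2} and \ref{6.2}(iii), together with the fact that $\lo(\Z,\omega)$ is unital---one reduces to $\lo_{\,0}(\Z,\omega)=A*(\delta_0-\delta_1)+A*(\delta_0-\delta_{-1})$, where $A=\lo(\Z,\omega)$. A telescoping computation then gives, for $g\in A$, that $\sigma_n^+(g*(\delta_0-\delta_1))=-g(n)$ and $\sigma_n^+(g*(\delta_0-\delta_{-1}))=g(n+1)$; combining this with the submultiplicativity bound $\omega_n\le\omega_{-1}\,\omega_{n+1}$ yields $\sum_{n=1}^\infty\omega_n\,|\sigma_n^+(g*(\delta_0-\delta_{\pm1}))|\le\omega_{-1}\Vert g\Vert_\omega<\infty$, so $(\sigma_n^+(f))_n\in\lo((\omega_n)_{n\ge1})$ for every $f\in\lo_{\,0}(\Z,\omega)$. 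Were $(\omega_n)_{n\ge1}$ not tail-preserving, then, just as in the proof of Theorem \ref{6.5}, there would be a sequence $(\alpha_j)_{j\ge1}$ of non-negative reals with $(\alpha_j)\in\lo((\omega_j))$ and $\left(\sum_{j>n}\alpha_j\right)_n\notin\lo((\omega_n))$; putting $\zeta=\sum_{j\ge1}\alpha_j$ (finite, as $\omega_j\ge1$) and $f=\zeta\delta_0-\sum_{j=1}^\infty\alpha_j\delta_j\in\lo_{\,0}(\Z,\omega)$ gives $\sigma_n^+(f)=-\sum_{j>n}\alpha_j$, a contradiction. Hence $(\omega_n)_{n\ge1}$ is tail-preserving, and the verbatim argument with $\sigma_n^-$ in place of $\sigma_n^+$ shows $(\omega_{-n})_{n\ge1}$ is tail-preserving.

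For the converse, suppose both $(\omega_n)_{n\ge1}$ and $(\omega_{-n})_{n\ge1}$ are tail-preserving, with constants $D,D'>0$ as in \eqref{eq5.1} provided by Proposition \ref{5.2}. The plan is to show, along the lines of the proof of Theorem \ref{1.6}, that $\lo_{\,0}(\Z,\omega)$ is generated by $\delta_0-\delta_1$ and $\delta_0-\delta_{-1}$. Given $f=\sum_{n\in\Z}\alpha_n\delta_n\in\lo_{\,0}(\Z,\omega)$, the relation $\sum_n\alpha_n=0$ lets one write $f=-\sum_{n\ge1}\alpha_n(\delta_0-\delta_n)-\sum_{m\ge1}\alpha_{-m}(\delta_0-\delta_{-m})$, the two series converging absolutely in $\Vert\cdot\Vert_\omega$. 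For $n\ge1$ we have $\delta_0-\delta_n=g_n*(\delta_0-\delta_1)$ with $g_n=\sum_{j=0}^{n-1}\delta_j$, and \eqref{eq5.1} gives $\Vert g_n\Vert_\omega=\sum_{j=0}^{n-1}\omega_j\le(1+D^{-1})\omega_n$; symmetrically $\delta_0-\delta_{-m}=h_m*(\delta_0-\delta_{-1})$ with $\Vert h_m\Vert_\omega\le(1+(D')^{-1})\omega_{-m}$. Therefore $s^+:=-\sum_{n\ge1}\alpha_ng_n$ and $s^-:=-\sum_{m\ge1}\alpha_{-m}h_m$ define elements of $A$ (with $\Vert s^\pm\Vert_\omega$ bounded by a multiple of $\Vert f\Vert_\omega$), and continuity of convolution gives $f=s^+*(\delta_0-\delta_1)+s^-*(\delta_0-\delta_{-1})$. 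Since the reverse inclusion $A^\sharp*(\delta_0-\delta_1)+A^\sharp*(\delta_0-\delta_{-1})\subseteq\lo_{\,0}(\Z,\omega)$ is immediate, this completes the proof.

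The main obstacle is bookkeeping rather than any conceptual leap: one must set up the one-sided functionals and telescoping identities so that the ``wrong-direction'' generator $g*(\delta_0-\delta_{-1})$ still produces an $(\omega_n)$-summable sequence of positive tails, and one must handle the index shifts together with the submultiplicativity estimates relating consecutive values of $\omega$ carefully. Beyond that, the forward direction copies the template of Theorem \ref{6.5} and the converse that of Theorem \ref{1.6}.
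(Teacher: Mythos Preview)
Your proposal is correct and follows essentially the same strategy as the paper: one-sided versions of the $\sigma_n$ functionals for the forward direction (the paper phrases this more tersely as ``repeat the proof of Theorem \ref{6.5} \ldots{} insisting that all functions have support contained in $\Z^+$''), and for the converse a decomposition of $f$ into its positive- and negative-side parts, each lying in $A*(\delta_0-\delta_{\pm 1})$. The only cosmetic difference is that for the converse the paper writes down the explicit ``antiderivative'' $g=-\sum_{n\ge 0}\bigl(\sum_{i=0}^n f(i)\bigr)\delta_n$ and checks $g*(\delta_1-\delta_0)=f$ directly, whereas you assemble $s^+$ as the series $-\sum_{n\ge 1}\alpha_n g_n$; unwinding your series gives exactly the paper's $g$, so the two arguments coincide.
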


\begin{proof}
Set $A = \lo(\Z, \omega)$. Suppose that $\left (\omega_n \right )_{n \in \N}$ is not tail-preserving. Then we can repeat the proof of Theorem \ref{6.5} with $G= \Z$ essentially unchanged, except that now we insist that all functions appearing in it have support contained in $\Z^+$, to show that $\lo_{\,0}(\Z, \omega)$ is not finitely-generated. By symmetry, the same conclusion holds if instead $\left (\omega_{-n} \right )_{n \in \N}$ fails to be tail-preserving.

Now suppose that  $\left (\omega_n \right )_{n \in \N}$ and $\left (\omega_{-n} \right )_{n \in \N}$ are both tail-preserving. Let $f \in \lo_{\,0}(\Z, \omega)$, and suppose for the moment that $\supp f \subset \Z^+$. Then we have
\begin{align*}
\sum \limits_{n=0}^\infty \omega_n \left \vert \sum \limits_{i = 1}^n f(i) \right \vert
= \sum \limits_{n=0}^\infty \omega_n \left \vert \sum \limits_{i = n+1}^\infty f(i) \right \vert < \infty,
\end{align*}
and so we may define $g \in A$ by 
$$g = -\sum \limits_{n=0}^\infty \left ( \sum \limits_{i = 0}^n f(i) \right )\delta_n.$$
Then
\begin{align*}
g*(\delta_1 - \delta_0) &=  -\sum \limits_{n=0}^\infty \left (\sum \limits_{i=0}^n f(i) \right) \left(\delta_{n+1} - \delta_{n} \right)\\
&= \sum \limits_{n=0}^\infty \left ( \sum \limits_{i=0}^n f(i) - \sum \limits_{i=0}^{n-1} f(i) \right) \delta_n 
= \sum \limits_{n=0}^\infty f(n)\delta_n =  f.
\end{align*}
Hence
$$f = g*(\delta_1 - \delta_0) \in A*(\delta_1 - \delta_0).$$
A similar argument shows that, if $\supp f \subset \Z^-$, then 
$$f \in A*(\delta_{-1} - \delta_0).$$
But any $f \in \lo_{\, 0}(\Z, \omega)$ can be written as $f = f_1+f_2$ for $f_1,f_2 \in \lo_{\,0}(\Z, \omega)$, with $\supp f_1 \subset \Z^+$ and $\supp f_2 \subset \Z^-$, and so we see that 
$$\lo_{\,0}(\Z, \omega) = A*(\delta_1 - \delta_0) +A*(\delta_{-1} - \delta_0)$$
is finitely-generated, as required
\end{proof}

We now construct the first of our special weights described at the beginning of the section. This is a weight on $\Z^+$ such that neither the augmentation ideal nor $M_0$ are finitely-generated.

\begin{lemma} \label{7.4}
Let $\rho >1$. Then there exists a weight $\omega$ on $\Z^+$, satisfying $\lim \limits_{n \rightarrow \infty} \omega_n^{1/n} = \rho$ such that there exists a strictly increasing sequence of natural numbers $(n_k)$ with 
\begin{equation} \label{eq7.1}
\frac{\omega_{n_k+1}}{\omega_{n_k}} \leq \frac{\rho+1}{k} \quad (k \in \N).
\end{equation}
\end{lemma}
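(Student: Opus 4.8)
The plan is to write down an explicit weight, taking
\[
\omega_n = \rho^{\,n+s(n)} \qquad (n \in \Z^+),
\]
where $s(n)$ denotes the number of $1$'s in the base-$2$ expansion of $n$, with the convention $s(0)=0$. Here $s$ plays the role of a slowly-growing correction term which is \emph{subadditive} and yet decreases by arbitrarily large amounts along a sparse set of indices; this is precisely what allows $\omega$ to be a genuine (submultiplicative) weight while still exhibiting the behaviour of the non-tail-preserving sequences of Lemma~\ref{5.3}(ii), whose naive block versions are \emph{not} submultiplicative.

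First I would record the standard properties of the binary digit sum: $s(0)=0$; $0\le s(n)\le\lfloor\log_2 n\rfloor+1$ for $n\ge 1$; and $s$ is subadditive, $s(m+n)\le s(m)+s(n)$, which follows from the identity $s(m)+s(n)=s(m+n)+(\text{number of carries in the base-}2\text{ addition of }m\text{ and }n)$. Consequently $a_n:=n+s(n)$ is subadditive, $a_0=0$, and $a_n\ge 0$ for all $n$, so $\omega_n:=\rho^{\,a_n}$ satisfies $\omega_0=1$, $\omega_n\ge 1$, and $\omega_{m+n}=\rho^{\,a_{m+n}}\le\rho^{\,a_m+a_n}=\omega_m\omega_n$; hence $\omega$ is a weight on $\Z^+$. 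Moreover $\omega_n^{1/n}=\rho^{\,1+s(n)/n}\to\rho$ since $s(n)/n\to 0$, so $\lim_{n\to\infty}\omega_n^{1/n}=\rho$ as required.

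Next I would pick the indices $(n_k)$. Since $\rho^{-m}\to 0$, I may choose a strictly increasing sequence $(m_k)\subset\N$ with $\rho^{\,2-m_k}\le(\rho+1)/k$ for every $k$; for example $m_k=k+\lceil\log_\rho k\rceil$ works, because then $\rho^{m_k}\ge k\rho^k$, so $\rho^{\,2-m_k}\le\rho^{\,2-k}/k\le(\rho+1)/k$ for all $k\ge 1$. Set $n_k=2^{m_k}-1$, which gives a strictly increasing sequence of natural numbers. The base-$2$ expansion of $n_k$ is $\underbrace{1\cdots1}_{m_k}$, so $s(n_k)=m_k$, whereas $n_k+1=2^{m_k}$ has $s(n_k+1)=1$. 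Therefore
\[
\frac{\omega_{n_k+1}}{\omega_{n_k}}=\rho^{\,a_{n_k+1}-a_{n_k}}=\rho^{\,(n_k+1-n_k)+(s(n_k+1)-s(n_k))}=\rho^{\,1+(1-m_k)}=\rho^{\,2-m_k}\le\frac{\rho+1}{k},
\]
which is exactly \eqref{eq7.1}.

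I expect the single point requiring care to be the first one: one should resist mimicking Lemma~\ref{5.3}(ii) by making $\omega$ constant on blocks of growing length and then jumping, since such sequences violate $\omega_{p+q}\le\omega_p\omega_q$. Subadditivity of $a$ bounds its one-step \emph{increases} (by the constant $a_1$) but imposes no lower bound on $a_{n+1}$ in terms of $a_n$, so large single-step decreases are entirely consistent with it; the binary digit sum supplies such decreases transparently, so no delicate block-by-block verification of submultiplicativity is needed.
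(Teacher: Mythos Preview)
Your proof is correct and takes a genuinely different route from the paper. The paper constructs the weight as $\omega_n=(\rho+\eps(n))^n$, where $\eps$ is a non-increasing null sequence built recursively: at stage $k$ one first picks $n_k$ large enough that $(1/n_k)^{1/n_k}(\rho+\eps(n_{k-1}+1))>\rho$, then chooses $\eps(n_k+1)$ strictly below that gap and below $1/k$. Submultiplicativity comes from $\eps$ being non-increasing, and the ratio estimate \eqref{eq7.1} from the inequality $(\rho+\eps(n_k+1))^{n_k}<(1/n_k)(\rho+\eps(n_k))^{n_k}$ forced by the recursion.

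Your construction $\omega_n=\rho^{\,n+s(n)}$ with $s$ the binary digit sum is fully explicit and avoids any recursion: subadditivity of $s$ (via the carry identity) gives submultiplicativity in one line, and the drop $s(2^m-1)-s(2^m)=m-1$ gives the ratio bound immediately. What this buys you is transparency and brevity; what the paper's approach buys is a template that could be tuned to other growth constraints, since one controls the ``perturbation'' $\eps$ directly rather than relying on the arithmetic of a fixed combinatorial function. Both arguments, incidentally, yield weights that cannot be extended to $\Z$ (the ratios $\omega_n/\omega_{n+1}$ are unbounded), which is why the paper needs the separate and more laborious construction of Lemma~\ref{7.6} for the $\Z$ case.
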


\begin{proof}
First, we define inductively a non-increasing null sequence  $(\eps(n))$ of positive reals, as follows.
Set $\eps(0) = 1$. Since $\lim \limits_{n \rightarrow \infty} (1/n)^{1/n} =1$, we can find an integer $n_1$ such that 
$$0<\left( \frac{1}{n_1} \right)^{1/n_1}(\rho +1)-\rho .$$
Define $\eps(n) = 1$ for $n \leq n_1$, and then choose $\eps (n_1 +1)$ such that
$$0< \eps(n_1+1) < \left( \dfrac{1}{n_1} \right)^{1/n_1}(\rho +1)-\rho.$$
Note also that $\eps(n_1+1) < 1$.

Now take $k \geq 2$, and suppose that we have already defined a strictly increasing sequence of integers $n_1,\ldots,n_{k-1}$, and defined $\eps(n)$ for $n \leq n_{k-1} +1$. Then choose $n_k \in \N$, with $n_k > n_{k-1}$ and such that 
$$0<\left( \frac{1}{n_k} \right)^{1/n_k}(\rho +\eps(n_{k-1}+1))-\rho.$$
Define $\eps(n) = \eps(n_{k-1}+1)$ for $n_{k-1}+1 < n \leq n_k$, and then choose $\eps (n_k+1)$ such that
$$0< \eps(n_k+1) <  \left( \frac{1}{n_k} \right)^{1/n_k}(\rho +\eps(n_{k-1}+1))-\rho,$$
whilst ensuring that $\eps(n_k+1) < \min \lbrace 1/k, \eps(n_k) \rbrace$. This completes the inductive construction of $\eps$.

Now define 
$$\omega_n = (\rho+\eps(n))^n \quad (n \in \Z^+).$$
Then $\omega := (\omega_n)$ is a weight on $\Z^+$, because
\begin{align*}
\omega_{m+n} &= (\rho+\eps(m+n))^m(\rho+\eps(m+n))^n \\
 &\leq (\rho+\eps(m))^m(\rho+\eps(n))^n = \omega_m \omega_n \quad (m,n \in \Z^+),
\end{align*}
where we have  used the fact that $\eps$ is non-increasing. As $\lim \limits_{n \rightarrow \infty} \eps(n) = 0$, we have $\lim \limits_{n  \rightarrow \infty} \omega_n^{1/n} = \rho$. It remains to show that \eqref{eq7.1} holds.

For $k \in \N$, we have
$$\frac{\omega_{n_k+1}}{\omega_{n_k}} \leq (\rho+1)\frac{(\rho +\eps(n_k+1))^{n_k}}{(\rho +\eps(n_k))^{n_k}}.$$
However
$$\rho +\eps(n_k+1) < \left( \dfrac{1}{n_k} \right)^{1/n_k}(\rho + \eps(n_{k-1}+1)) = \left( \dfrac{1}{n_k} \right)^{1/n_k}(\rho+\eps(n_k)),$$
which implies that 
$$\dfrac{(\rho +\eps(n_k+1))^{n_k}}{(\rho +\eps(n_k))^{n_k}} < \dfrac{1}{n_k} \leq \frac{1}{k},$$
and \eqref{eq7.1} now follows.
\end{proof}

As $\lim \limits_{n \rightarrow \infty} \omega_n^{1/n} = \inf \limits_{n \in  \N} \omega_n^{1/n}$ by \cite[Proposition A.1.26(iii)]{D}, the weight constructed in Theorem \ref{7.4} satisfies $\omega_n \geq \rho^n \: (n \in \N)$. However, Lemma \ref{5.2} implies that $\omega$ is not tail-preserving, as
$$\liminf \limits_n \left( \omega_{n+1} \left( \sum_{j=1}^{n} \omega_j \right)^{-1} \right) \leq \liminf \limits_n \frac{\omega_{n+1}}{\omega_n} =0.$$
Hence we have a version of Lemma \ref{5.3}(ii) in which the sequence is also a weight.

\begin{theorem} \label{7.5} 
Let $\omega$ denote the weight constructed in Lemma {\rm \ref{7.4}}. Then neither $M_1$ nor $M_0$ is finitely-generated, even though both 0 and 1 correspond to interior points of the character space.
\end{theorem}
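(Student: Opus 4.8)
The plan is to treat the two maximal ideals $M_1 = \lo_{\,0}(\Z^+,\omega)$ and $M_0$ separately, in each case using Lemmas \ref{2.1} and \ref{2.2} to replace a hypothetical finite generating set by one consisting of finitely-supported elements, and then exhibiting an explicit member of the ideal that lies outside the left ideal generated by any finitely-supported elements of the relevant type. That $0$ and $1$ are interior points of the character space is immediate, since that space is the disc $\{z \in \C : |z| \leq \rho\}$ with $\rho > 1$.

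\emph{The ideal $M_1$.} This is the argument of the first paragraph of the proof of Theorem \ref{7.1}, specialised to $\Z^+$. If $M_1$ were finitely-generated, then, since $\C_0\Z^+$ is dense in the closed ideal $\lo_{\,0}(\Z^+,\omega)$ (Lemma \ref{2.2}), Lemma \ref{2.1} would let us take the generators in $\C_0\Z^+$; and each element of $\C_0\Z^+$ lies in $\C\Z^+ * (\delta_0 - \delta_1)$, because $\sum_j \alpha_j \delta_j = \sum_j \bigl(\sum_{i \leq j}\alpha_i\bigr)(\delta_j - \delta_{j+1})$ when $\sum_j \alpha_j = 0$, while $\delta_j - \delta_{j+1} = \delta_j * (\delta_0 - \delta_1)$. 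Hence we would obtain $M_1 = \lo(\Z^+,\omega) * (\delta_0 - \delta_1)$. Now $[g*(\delta_0 - \delta_1)](n) = g(n) - g(n-1)$, so $f \in \lo(\Z^+,\omega)*(\delta_0 - \delta_1)$ exactly when the function $g$ given by $g(n) = \sum_{i=0}^n f(i) = -\sum_{i=n+1}^\infty f(i)$ lies in $\lo(\Z^+,\omega)$, i.e.\ when $\bigl(\sum_{i=n+1}^\infty f(i)\bigr)_n \in \lo((\omega_n))$. Since every $(\omega_n)_{n \in \N}$-summable sequence equals $(f(n))_{n \geq 1}$ for some $f \in \lo_{\,0}(\Z^+,\omega)$ (put $f(0) = -\sum_{n \geq 1}f(n)$), requiring this of all such $f$ is precisely the statement that $(\omega_n)_{n \in \N}$ is tail-preserving. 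By the discussion following Lemma \ref{7.4}, it is not; hence $M_1$ is not finitely-generated.

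\emph{The ideal $M_0$.} Here $M_0 = \{f \in \lo(\Z^+,\omega) : f(0) = 0\}$, and its finitely-supported elements, $\lin\{\delta_k : k \geq 1\}$, are dense in it (Lemma \ref{2.2}). So if $M_0$ were finitely-generated, Lemma \ref{2.1} would give generators $h_1, \ldots, h_n$, each supported in some $\{1, \ldots, N\}$. Every such $h_i$ factors as $h_i = h_i' * \delta_1$ with $h_i'(m) = h_i(m+1)$ finitely supported, so $h_i \in \lo(\Z^+,\omega)*\delta_1$, and therefore $M_0 \subseteq \lo(\Z^+,\omega)*\delta_1$. But $\lo(\Z^+,\omega)*\delta_1 = \{f : f(0) = 0,\ \sum_{k \geq 1}\omega_{k-1}|f(k)| < \infty\}$ is a \emph{proper} subset of $M_0$: with $(n_k)$ as in Lemma \ref{7.4}, the function $f = \sum_{k=1}^\infty \frac{1}{k^2 \omega_{n_k+1}}\delta_{n_k+1}$ has $\Vert f \Vert_\omega = \sum_k k^{-2} < \infty$, so $f \in M_0$, whereas $\sum_{m \geq 1}\omega_{m-1}|f(m)| = \sum_k \omega_{n_k}/(k^2 \omega_{n_k+1})$, which by \eqref{eq7.1} is at least $\sum_k 1/((\rho+1)k) = \infty$, so $f \notin \lo(\Z^+,\omega)*\delta_1$. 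This contradicts $M_0 \subseteq \lo(\Z^+,\omega)*\delta_1$, so $M_0$ is not finitely-generated.

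I expect the only delicate points to be the bookkeeping in these two reductions — confirming that Lemmas \ref{2.1} and \ref{2.2} genuinely force the generators of $M_1$ (resp.\ $M_0$) into $\lo(\Z^+,\omega)*(\delta_0 - \delta_1)$ (resp.\ $\lo(\Z^+,\omega)*\delta_1$) — together with the identification of ``$\bigl(\sum_{i > n}f(i)\bigr)_n \in \lo((\omega_n))$ for every $f \in \lo_{\,0}(\Z^+,\omega)$'' with the tail-preserving property, which uses the harmless adjustment of the value at $0$. Everything else is a short computation with \eqref{eq7.1}.
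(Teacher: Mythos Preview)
Your proposal is correct and follows essentially the same route as the paper's proof: both reduce $M_1$ and $M_0$ to the principal left ideals generated by $\delta_0-\delta_1$ and $\delta_1$ respectively via Lemmas~\ref{2.1} and~\ref{2.2}, and then use the failure of the tail-preserving property (for $M_1$) and an explicit element built from the sequence $(n_k)$ of Lemma~\ref{7.4} and inequality~\eqref{eq7.1} (for $M_0$). The only cosmetic difference is your choice of test function for $M_0$, namely $\sum_k (k^2\omega_{n_k+1})^{-1}\delta_{n_k+1}$ in place of the paper's $\sum_k (k\omega_{n_k})^{-1}\delta_{n_k+1}$; both work by the same estimate.
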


\begin{proof} 
Set $A = \ell^{\, 1}(\Z^+, \omega)$ and assume towards a contradiction that $M_0$ is finitely-generated. Note that $M_0  = \lbrace f \in A: f(0) = 0 \rbrace$, so that every finitely supported element of $M_0$ is of the form $g*\delta_1$, for some $g \in A$. By Lemmas \ref{2.1} and \ref{2.2}, we may suppose that the generators of $M_0$ have finite support, and as they also lie in $M_0$, we may factor out a $\delta_1$ from each one. It follows that $M_0 = A*\delta_1$. Define a sequence of non-negative reals by 
$$\alpha_j = 
\begin{cases}
 (k \omega_{n_k})^{-1} & \quad \text{if } j = n_k+1 ,\\
 0 & \quad \text{otherwise.}
\end{cases}$$
Let $f = \sum \limits_{j = 1}^\infty \alpha_j \delta_j$. Then by \eqref{eq7.1} we have

\begin{align*} 
\sum \limits_{j = 0}^\infty \vert \alpha_j \vert \omega_j = \sum \limits_{k = 1}^\infty \frac{\omega_{n_k +1}}{k \omega_{n_{k}}} 
\leq (\rho +1) \sum \limits_{k = 1}^\infty \frac{1}{k^2} < \infty.
\end{align*}
This shows that $f \in A$, and so clearly $f \in M_0$. Assume that $f = g* \delta_1$ for some $g \in A$. Then $g$ must satisfy $g(j-1) = f(j) \; (j \in \N)$. However,
$$\sum \limits_{j = 1}^\infty \vert f(j) \vert \omega_{j-1} = \sum \limits_{k = 1}^\infty \frac{1}{k}= \infty,$$
so that $g \notin A$.

The case of $M_1$ is very similar. This time we know that, if $M_1$ is finitely-generated, it must equal $A*(\delta_0 - \delta_1)$. By the remark preceding the theorem $\omega$ is not tail-preserving, and so there exists some sequence $(\alpha_n) \in \lo(\omega)$, such that 
$$\sum_{n=1}^\infty \omega_n \left \vert \sum_{j=n+1}^\infty \alpha_j \right \vert = \infty.$$
 Let $\zeta = \sum_{n=1}^\infty \alpha_n$, and let $f = \zeta \delta_0 - \sum_{n=1}^{\infty} \alpha_n \delta_n$. Then $f \in M_1$. Assume that $f = g*(\delta_0-\delta_1)$ for some $g \in A$. A short calculation implies that
$$g(n) = \sum_{j=0}^n f(j) = -\sum_{j=n+1}^\infty \alpha_j$$
 for all $n \geq 1$, contradicting the fact that $\sum_{n=0}^\infty \omega_n \vert g(n) \vert < \infty$.
\end{proof}

We remark that a weight $\omega$ on $\Z^+$ extends to a weight  on $\Z$ if and only if $\sup \limits_{n \in \N} \omega_n/\omega_{n+1} < \infty$. The  ``only if'' direction of this implication just follows from submultiplicativity of the weight at $-1$. For the ``if'' direction, set $C = \sup \limits_{n \in \N} \omega_n/\omega_{n+1}$. Then it is routine to verify that $\omega_{-n} = C^n \omega_n \: (n \in \N)$ defines an extension. It follows from this observation that the weight constructed in Lemma \ref{7.4} admits no extension to $\Z$. However, a different construction does allow us to do something similar on $\Z$.
 
\begin{lemma} \label{7.6} 
Let $\rho>1$. Then there exists a weight $\omega$ on $\Z$ satisfying $\lim \limits_{n \rightarrow \infty} \omega_n ^{1/n} = \rho$ and $\lim \limits_{n \rightarrow \infty} \omega_{-n}^{-1/n} < 1$, but such that $(\omega_n)_{n \in \N}$ is not tail-preserving.
\end{lemma}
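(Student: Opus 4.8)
The plan is to reduce the problem to a construction on $\Z^+$ and then invoke the Remark preceding this lemma. If $(\nu_n)_{n\in\Z^+}$ is any weight on $\Z^+$ with $C:=\sup_n \nu_n/\nu_{n+1}<\infty$, then (by that Remark) $\omega_n=\nu_n$, $\omega_{-n}=C^n\nu_n$ ($n\in\N$) defines a weight on $\Z$ with $\lim_n\omega_n^{1/n}=\lim_n\nu_n^{1/n}$ and $\lim_n\omega_{-n}^{-1/n}=\bigl(C\lim_n\nu_n^{1/n}\bigr)^{-1}$. So it suffices to produce a weight $\nu$ on $\Z^+$ with $\lim_n\nu_n^{1/n}=\rho$, with $\sup_n\nu_n/\nu_{n+1}<\infty$, and with $(\nu_n)_{n\in\N}$ not tail-preserving. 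The condition $\lim_n\omega_{-n}^{-1/n}<1$ is then automatic: we always have $C\ge 1/\rho$, since $C^n\ge \nu_0/\nu_n=1/\nu_n$ and $\nu_n^{1/n}\to\rho$; and $C=1/\rho$ is impossible, because then $\nu_{n+1}\ge\rho\nu_n$ for all $n$, so $(\nu_n/\rho^n)$ is non-decreasing and hence $\sum_{j=1}^n\nu_j\le(\nu_n/\rho^n)\sum_{j=1}^n\rho^j\le\frac{\rho}{\rho-1}\nu_n$, giving $\nu_{n+1}\big/\sum_{j=1}^n\nu_j\ge\rho-1>0$, so that $(\nu_n)$ would be tail-preserving by Proposition \ref{5.2}. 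Thus $C>1/\rho$, i.e.\ $(C\rho)^{-1}<1$.

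For the construction of $\nu$ we follow the idea behind Lemma \ref{7.4} (and Lemma \ref{5.3}(ii)), but the single-step ``cliffs'' used there must be replaced by descents performed at a bounded rate, since it is precisely those cliffs that destroy the bound on $\nu_n/\nu_{n+1}$. Fix a base $d>1$ (the maximal descent rate) and a very rapidly increasing sequence $(m_k)$. Off a sparse family of narrow intervals $I_k=[a_k,b_k]\ni m_k$ put $\nu_n=\rho^n$; on $I_k$ let $\nu$ first climb, from $\rho^{a_k}$, at a rate $R_k>\rho$ with $R_k\downarrow\rho$, over $s_k$ steps, reaching a peak $\rho^{m_k}(R_k/\rho)^{s_k}$ at $m_k$, where $s_k\log(R_k/\rho)\to\infty$ but $s_k\log(R_k/\rho)=o(m_k)$, and then descend at rate $1/d$ back to $\rho^{b_k}$ at $b_k$. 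With this definition $\sup_n\nu_n/\nu_{n+1}=d<\infty$, and $\nu_n^{1/n}\to\rho$ because each peak contributes only $o(1)$ to $n^{-1}\log\nu_n$. Moreover, for suitable indices $n_k$ lying on or just past the $k$-th descent, $\sum_{j\le n_k}\nu_j$ is at least the peak value $\rho^{m_k}(R_k/\rho)^{s_k}$ while $\nu_{n_k+1}$ is much smaller, so that $\nu_{n_k+1}\big/\sum_{j\le n_k}\nu_j\to 0$ and $(\nu_n)$ is not tail-preserving by Proposition \ref{5.2}.

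The delicate point---and what I expect to be the main obstacle---is submultiplicativity of $\nu$, equivalently subadditivity of $n\mapsto\log\nu_n$. Writing $\log\nu_n=n\log\rho+\delta(n)$ with $\delta\ge 0$ and $\delta(0)=0$, this reduces to subadditivity of the ``bump profile'' $\delta$. A single localized bump is never subadditive on its own: on its rising edge one has $\delta(a_k+t)>0=\delta(a_k)+\delta(t)$ whenever $\delta(t)=0$. Hence the peaks cannot be placed freely---applying subadditivity to $m_k=\lfloor m_k/2\rfloor+\lceil m_k/2\rceil$ forces $\delta$ to take a value $\ge\tfrac12\delta(m_k)$ near $m_k/2$, and iterating produces a self-similar family of bumps at the dyadic scales $m_k/2^j$. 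The construction must therefore be organised around this constraint, for instance by taking $\delta$ to be the largest subadditive minorant of a suitable profile, or by building the dyadic ``bump tree'' explicitly, and the real work of the proof is to carry this out while simultaneously keeping $\delta(n)=o(n)$, keeping the downward increments of $\delta$ bounded (so that $\sup_n\nu_n/\nu_{n+1}<\infty$), and keeping $\delta(m_k)\to\infty$. Once such a $\nu$ is in hand the spectral-radius, descent-rate and tail estimates are routine, and the lemma follows from the reduction in the first paragraph.
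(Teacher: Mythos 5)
Your first paragraph is a correct and clean reduction: producing a weight $\nu$ on $\Z^+$ with $\lim_n\nu_n^{1/n}=\rho$, with $\sup_n\nu_n/\nu_{n+1}<\infty$, and with $(\nu_n)$ not tail-preserving does suffice, and your observation that $\lim_n\omega_{-n}^{-1/n}<1$ then comes for free (via $C>1/\rho$, which you derive correctly using Proposition \ref{5.2}) is a nice touch; the paper instead secures this by multiplying through by $\rho^{|n|}$, which forces $\omega_{-n}\ge\rho^{n}$ directly. But the heart of the lemma is the construction of $\nu$, and that is missing. The concrete weight described in your second paragraph --- isolated bumps over sparse intervals $I_k$, climbing at rate $R_k$ and descending at rate $1/d$, with $\nu_n=\rho^n$ elsewhere --- is, as you yourself then point out, not submultiplicative: on the rising edge of a bump one has $\delta(a_k+t)>\delta(a_k)+\delta(t)$. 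So the only construction actually written down in the proposal fails, and your third paragraph replaces it with a programme (``largest subadditive minorant of a suitable profile'', ``building the dyadic bump tree explicitly'') whose feasibility is precisely what has to be proved: one must check that such a modification does not collapse the bumps, that the downward steps of $\delta$ stay bounded, and that $\delta(m_k)\to\infty$ survives. Since you explicitly defer ``the real work of the proof'', there is a genuine gap.

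For comparison, the paper carries out exactly the dyadic bump-tree you anticipate, but via an explicit recursion rather than a minorant: with $n_k=2^k-1$ it seeds $\gamma$ on $\{0,1,2,3\}$ and sets $\gamma_j=(\rho+1)\gamma_{j-n_k}$ for $n_k\le j<n_{k+1}$, so that each block is a rescaled copy of everything preceding it. This yields $\gamma_{n_k-i}=(\rho+1)^{i+1}$ for $0\le i\le k-1$, hence peaks of relative height $(\rho+1)^{k}$ that descend to $(\rho+1)$ at $n_k$ at the bounded rate $\rho+1$ per step (killing tail-preservation after multiplying by $\rho^{n}$), and submultiplicativity is verified by a three-case induction on $i+j$ driven by the recursion. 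If you want to complete your argument, making the self-similar recursion explicit in this way is the most direct route, since it is exactly what renders the subadditivity check tractable.
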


\begin{proof}
 With the preceding remark in mind, we construct first a weight $\gamma$ on $\Z^+$ satisfying $\sup \limits_{n \in \N} \lbrace \gamma_n/ \gamma_{n+1} \rbrace \leq \rho+1$, which ensures that $\gamma$ extends to  a weight on $\Z$. In the end we shall define $\omega$ by $\omega_n = \rho^{\vert n \vert}\gamma_n$.

We set $n_k = 2^k -1  \; (k \in \N)$. We define $\gamma$ on $\lbrace 0,1,2,3 \rbrace$ by 
$$\gamma_0 = 1, \; \gamma_1 = \rho+1, \; \gamma_2 = (\rho+1)^2, \; \gamma_3 = \rho+1.$$
We then recursively define
$$\gamma_j = (\rho+1) \gamma_{j - n_k} \quad (n_k \leq j < n_{k+1}, \: k \geq 2).$$

We observe that
\begin{equation} 
\gamma_{n_k - i} = (\rho+1)^{i+1} \quad (0 \leq i \leq k-1, \: k \geq 2).
\label{star}
\end{equation}
This follows by an easy induction on $k$. Indeed, the base case can be seen to hold by inspection, and for $k \geq 3$ we see that
\begin{align*}
\gamma_{n_k -i} &= (\rho+1) \gamma_{n_k -i - n_{k-1}} = (\rho+1) \gamma_{n_{k-1} +1 -i} \\
&= (\rho+1)(\rho+1)^i = (\rho+1)^{i+1} \quad (1 \leq i \leq k-1)
\end{align*}
and
$$\gamma_{n_k} = (\rho+1)(\gamma_{n_k - n_k}) = (\rho+1)\gamma_0 = (\rho+1).$$

We now claim that 
\begin{equation} \label{dagger}
\gamma_j \leq (\rho+1) \gamma_{j+1} \quad (j \in \N). 
\end{equation}
Again, this  can be seen by inspection for $j \leq n_2$, and we then proceed by induction on $k$. Indeed, if $j \in [n_k, n_{k+1}-2]$ then
$$\frac{\gamma_j}{\gamma_{j+1}} = \frac{(\rho+1)\gamma_{j-n_k}}{(\rho+1)\gamma_{j+1 - n_k}} = \frac{\gamma_{j-n_k}}{\gamma_{j+1 - n_k}} \leq (\rho+1).$$
When $j  = n_{k+1}-1$ and $j+1 = n_{k+1}$, then, by (\ref{star}), we have
$$\frac{\gamma_j}{\gamma_{j+1}} = \frac{(\rho+1)^2}{(\rho+1)} = \rho+1,$$
establishing the claim.

Now we are ready to prove that $\gamma$ really is a weight. That $\gamma$ is submultiplicative on $\lbrace 0,1,2,3 \rbrace$ can be seen by inspection. Let $i, j \in \N$, with  $i \leq j$, and let $k \in \N$ satisfy $i+j \in [n_{k+1}, n_{k+2})$. We proceed by induction on $i+j$. If $j<n_k$ then $i+j <2n_k <n_{k+1}$, so we must have $j \geq n_k$. There are three cases. Firstly, if $j \geq n_{k+1}$, then
$$\gamma_{i+j} = (\rho+1) \gamma_{i+j-n_k} \leq (\rho+1) \gamma_i \gamma_{j-n_k} = \gamma_i \gamma_j.$$
If instead $j<n_{k+1}$, but $i \geq n_k$, then
\begin{align*}
\gamma_{i+j} &= (\rho+1) \gamma_{i+j - n_{k+1}} = (\rho+1)\gamma_{(i-n_k)+(j-1-n_k)} \\
&\leq (\rho+1) \gamma_{i-n_k}\gamma_{j-1-n_k} 
=\frac{1}{\rho+1} \gamma_i \gamma_{j-1}
\leq \gamma_i \gamma_j,
\end{align*}
by \eqref{dagger}.
Finally, suppose that $i<n_k$ and $n_k \leq j <n_{k+1}$. In this case we have $i+j-2^k < n_{k+1},$
 and, since $i+j \geq n_{k+1},$ we also have $$i+j -2^k \geq n_{k+1} - 2^k = n_k,$$ 
so that $i+j - 2^k \in [n_k, n_{k+1}).$ Then the formula $i+j - n_{k+1} = i+j-(n_k+2^k)$ implies that 
\begin{align*}
\gamma_{i+j} = (\rho+1) \gamma_{i+j-n_{k+1}} = (\rho+1)\gamma_{i+j-2^k - n_k} 
= \gamma_{i+j-2^k}.
\end{align*}
Therefore
\begin{align*}
\gamma_{i+j} &= \gamma_{i+j-2^k} \leq \gamma_{i-1}\gamma_{j+1-2^k} = \gamma_{i-1}\gamma_{j-n_k} 
\leq (\rho+1)\gamma_i \gamma_{j-n_k} \quad \text{by \eqref{dagger}} \\
&= \gamma_i \gamma_j.
\end{align*}
This concludes the proof that $\gamma$ is a weight. By (\ref{dagger}), it extends to a weight  on $\Z$, which we also denote by $\gamma$.

Define $\omega = (\omega_n)$ by $\omega_n = \rho^{\vert n \vert} \gamma_n \; (n \in \Z)$. As $\gamma_{n_k}^{1/n_k} = (\rho+1)^{1/n_k}$ for all $k \geq 2$, we must have $\lim \limits_{n \rightarrow \infty} \gamma_n^{1/n} = 1$, and hence $\lim \limits_{n \rightarrow \infty} \omega_n^{1/n} = \rho$. Furthermore,
$$\lim \limits_{n \rightarrow \infty} \omega_{-n}^{-1/n} = \frac{1}{\rho} \lim \limits_{n \rightarrow \infty} \gamma_{-n}^{-1/n} \leq \frac{1}{\rho} <1,$$
as required.

It remains to show that $(\omega_n)_{n \in \N}$ is not tail-preserving. We compute
\begin{align*}
\frac{\omega_{n_k}}{\sum_{j=1}^{n_k-1} \omega_j} \leq \frac{\omega_{n_k}}{\omega_{n_k -(k-1)}} = \frac{\rho^{n_k}(\rho+1)}{\rho^{n_k+1-k}(\rho+1)^k} 
= \left(\frac{\rho}{\rho+1} \right)^{k-1},
\end{align*}
which tends to 0 as $k$ goes to infinity. In particular, $(\omega_n)_{n \in \N}$ violates \eqref{eq5.1}, so it is not tail-preserving. 
\end{proof}

\begin{theorem} \label{7.7}
Let $\omega$ be the weight constructed in Lemma {\rm \ref{7.6}}. Then the augmentation ideal $\lo_{\,0}(\Z, \omega)$ fails to be finitely-generated, despite corresponding to an interior point of the character space.
\end{theorem}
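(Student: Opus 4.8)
The plan is to read the result off from Theorem~\ref{7.1} together with the properties of $\omega$ recorded in Lemma~\ref{7.6}; essentially all of the work has already been done in constructing that weight. First I would invoke Theorem~\ref{7.1}, which says that $\lo_{\,0}(\Z, \omega)$ is finitely-generated if and only if \emph{both} of the sequences $(\omega_n)_{n \in \N}$ and $(\omega_{-n})_{n \in \N}$ are tail-preserving. Since Lemma~\ref{7.6} delivers a weight $\omega$ for which $(\omega_n)_{n \in \N}$ is not tail-preserving, Theorem~\ref{7.1} immediately yields that $\lo_{\,0}(\Z, \omega)$ is not finitely-generated.

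It remains to identify the point of the character space to which the augmentation ideal corresponds and to check that it is interior. By the description of $\Phi_{\lo(\Z, \omega)}$ recalled at the start of the section, this character space is the annulus $\lbrace z \in \C : \rho_1 \leq \vert z \vert \leq \rho_2 \rbrace$, where $\rho_1 = \lim_n \omega_{-n}^{-1/n}$ and $\rho_2 = \lim_n \omega_n^{1/n}$, and the augmentation ideal is $M_1$, the maximal ideal at the point $1$. Lemma~\ref{7.6} guarantees $\rho_2 = \rho > 1$ and $\rho_1 = \lim_n \omega_{-n}^{-1/n} < 1$, so that $\rho_1 < 1 < \rho_2$; hence $1$ lies in the open annulus $\lbrace z : \rho_1 < \vert z \vert < \rho_2 \rbrace$, which is precisely the topological interior of the character space. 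This completes the argument.

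There is no real obstacle at this stage: the substance of the theorem lies entirely in Lemma~\ref{7.6}, where one must simultaneously arrange that $\omega$ is a genuine weight, that it extends to $\Z$ in such a way that $\rho_1 < 1$, and that $(\omega_n)_{n \in \N}$ violates the growth condition~\eqref{eq5.1}. Granting that lemma and Theorem~\ref{7.1}, the proof above is just a matter of unwinding definitions; the only line deserving care is the verification that the strict inequalities $\rho_1 < 1 < \rho_2$ really do place $1$ in the interior, rather than on the boundary, of the annulus.
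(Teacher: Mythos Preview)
Your proposal is correct and follows precisely the same approach as the paper's proof, which simply states that $M_1$ corresponds to an interior point of the annulus by construction and then applies Theorem~\ref{7.1}. Your version is in fact more explicit than the paper's, spelling out why the inequalities $\rho_1 < 1 < \rho_2$ place $1$ in the interior.
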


\begin{proof}
By construction $M_1$ corresponds to an interior point of the annulus. Now apply Theorem \ref{7.1}.
\end{proof}

\subsection*{Acknowledgements}
\noindent 
I would like to thank my supervisor Garth Dales for suggesting the topic of this paper, and for many helpful discussions whilst I was conducting this research. I would also like to thank my second supervisor Niels Laustsen, as well as Yemon Choi, for patiently listening to various expositions of my work when in progress, and for giving constructive and encouraging feedback. Finally, I would like to thank the referee for his/her valuable comments.
 




\normalsize
\baselineskip=17pt


\end{document}